\title{Multiscale Finite Element approach for "weakly" random problems
  and related issues} 
\author{Claude Le Bris, Fr\'ed\'eric Legoll, Florian Thomines
\\ \\
{\footnotesize 
{\'E}cole Nationale des Ponts et Chauss{\'e}es, 6 et 8
    avenue Blaise Pascal, 77455 Marne-La-Vall{\'e}e Cedex 2}
\\ 
{\footnotesize 
and INRIA Rocquencourt, MICMAC team-project, 
Domaine de Voluceau, B.P. 105,}
\\ 
{\footnotesize
78153 Le Chesnay Cedex, France}
\\
{\footnotesize 
lebris@cermics.enpc.fr, \{legoll,thominef\}@lami.enpc.fr}
}
\date{\today}
\newcommand{\dps}{\displaystyle}
\newcommand{\esp}{\mathbb{E}}
\newcommand{\var}{\mathbb{V}\textrm{ar}}
\newcommand{\cell}{\mathbf{K}}
\newcommand{\RR}{\mathbb R}
\newcommand{\ZZ}{\mathbb Z}
\newcommand{\PP}{\mathbb P}
\newcommand{\NN}{\mathbb N}
\newcommand{\eps}{\varepsilon}
\newcommand{\wper}{w^0}
\newcommand{\h}{{H^1_h}}
\newcommand{\m}{\overline m}
\newcommand{\Rop}{{\cal R}^\eps}
\newtheorem{theorem}{Theorem}
\newtheorem{lemme}[theorem]{Lemma}
\newtheorem{remark}[theorem]{Remark}
\begin{document}
\selectlanguage{english}
\maketitle

\begin{abstract}
We address multiscale elliptic problems with random coefficients that
are a perturbation of multiscale deterministic problems.
Our approach consists in taking benefit of the perturbative context to
suitably modify the classical Finite Element basis into a deterministic
multiscale Finite Element basis. The latter essentially shares the same
approximation properties as a multiscale Finite Element basis directly
generated on the random problem. The specific reference method that we
use is the Multiscale Finite Element Method. Using numerical
experiments, we demonstrate the efficiency of our approach and the
computational speed-up with respect to a more standard approach. We
provide a complete analysis of the approach, extending that available
for the deterministic setting. 
\end{abstract}

\section{Overview of our approach and results}

The Multiscale Finite Element Method (henceforth abbreviated as MsFEM)
is a popular 
numerical approach for multiscale problems
(see~\cite{hou1997,hou1999,Efendiev2009,Efendiev2000,Efendiev2004,Allaire2005,hou2004,Chen2002,Chen2006,bal2010}).
It consists in a Galerkin approximation of the original problem over
a finite dimensional space generated by basis functions that are
specifically {\em adapted} to the problem under consideration.

This approach is popular for a twofold reason. First, its use is not
restricted to multiscale problems that converge to a
homogenized problem in the limit of vanishing ratio between the small
scale and the macroscopic scale. It may be applied to much more general
situations. Second, when the problem does converge to a
homogenization problem, the MsFEM approach is meant to approximate the
solution of the problem with the small scale $\varepsilon$ at its actual
small value and not "only" in the asymptotic regime $\varepsilon \to 0$,
which is the regime addressed by homogenization theory. 

To fix the ideas, consider the problem of finding $u^\varepsilon$ solving
\begin{equation}
\label{0}
 -\mbox{div}\left[A^\varepsilon \nabla
u^{\varepsilon} \right] = f \ \ \mbox{in $\mathcal{D}$}, 
\quad
u^\varepsilon = 0 \ \ \mbox{on $\partial \mathcal{D}$},
\end{equation}
on a bounded domain $\mathcal{D} \subset \RR^d$, with $f \in
L^2(\mathcal{D})$, and where $A^\varepsilon$ is a uniformly bounded,
coercive matrix that varies at scale
$\varepsilon$. A standard Finite Element Method (FEM) would require a
space discretization of the domain at the scale $\varepsilon$ in order
to capture the oscillations of $u^\varepsilon$ at scale $\eps$. This is
prohibitively expensive. The MsFEM
aims at accurately approximating $u^\varepsilon$ using a limited number
of degrees of freedom. It does not require the matrix $A^\varepsilon$ to
be periodic (namely $A^\varepsilon(x) = A_{per}(x/\varepsilon)$ for
a fixed periodic matrix $A_{per}$) or
stationary. 

\medskip

We now briefly describe the approach and present
the aim of this article. Starting from a coarse mesh ${\cal T}_h$ with a
standard (say $\mathbb{P}_1$) Finite Element basis set of functions 
$\left\{ \phi_i^0 \right\}_{i=1}^L$, generating the associated space
$$
{\cal V}_h:=\text{span}(\phi_i^0,i=1,\cdots,L),
$$ 
we first numerically build the MsFEM basis functions
$\phi_i^\varepsilon$. Several definitions of these basis functions have
been proposed in the literature (yielding different numerical
methods), and we detail this in the sequel (see e.g.~\eqref{PB:MsFEM-Over-nb}-\eqref{eq:MsFEM-Over-nb-1}-\eqref{eq:MsFEM-Over-nb-2}). For the moment, it is
sufficient to know that, to each $\phi_i^0$, which varies at the macroscopic
scale, is associated a function $\phi_i^\varepsilon$, with variations at
the scale~$\varepsilon$. In practice, $\phi_i^\varepsilon$ is
{\em numerically} computed (in fact, {\em pre-computed}), using the
specificities of the problem addressed.
These highly oscillatory functions $\phi_i^\varepsilon$ generate the finite
dimensional space  
$$
{\cal W}_h:=\text{span}(\phi_i^\varepsilon,i=1,\cdots,L).
$$
Note that ${\cal W}_h$ and ${\cal V}_h$ share the same dimension. 

We next define the MsFEM solution $u_M$ using a Galerkin approximation
of~\eqref{0} on ${\cal W}_h$, instead of ${\cal V}_h$. Again, details
will be given below. The MsFEM solution $u_M$ provided by the approach reads
$$
u_M(x) = \sum\limits_{i=1}^L (U_M)_i \ \phi_i^\varepsilon(x),
$$
for some coefficients $\left\{ (U_M)_i \right\}_{i=1}^L$. Of course, 
these coefficients depend on $\varepsilon$, but this dependency is kept
implicit in the sequel.

\medskip

We now turn our attention to the stochastic problem
\begin{equation}
\label{0-sto}
 -\mbox{div}\left[A^\varepsilon(\cdot,\omega) \nabla
u^{\varepsilon}(\cdot,\omega) \right] = f \ \ \mbox{in $\mathcal{D}$}, 
\quad
u^\varepsilon(\cdot,\omega) = 0 \ \ \mbox{on $\partial \mathcal{D}$},
\end{equation}
and a typical quantity of interest $\esp \left[ u^{\varepsilon}(x,\cdot)
\right]$, which is traditionally approximated using
a Monte Carlo method. Introducing a set of ${\cal M}$ realizations of the
stochastic matrix $\left\{A^{\varepsilon,m}\right\}_{1\leq
  m\leq {\cal M}}$, a direct, na\"ive application of the MsFEM paradigm would
consist in first computing for each realization $m$ the stochastic MsFEM
basis functions $\phi_i^{\varepsilon,m}(x,\omega)$, next performing 
a Galerkin approximation of~\eqref{0-sto} using this MsFEM basis
set to compute $\left\{u_M^m(x,\omega)\right\}_{1\leq m\leq {\cal M}}$,
and eventually approximating $\esp \left[ u^{\varepsilon}(x,\cdot) \right]$ by
$$
\esp \left[ u^{\varepsilon}(x,\cdot) \right] \approx 
\frac{1}{\cal M} \sum_{m=1}^{\cal M} u_M^m(x,\omega).
$$
Such an approach is unpractical because of the prohibitively expensive computational load. 

\medskip

To reduce the computational cost and make the MsFEM approach practical
in such a stochastic context, a natural idea we investigate in this article is
to consider a less generic setting, for which a dedicated, more
computationally affordable approach, can be designed. 
One possibility is to consider matrices 
$A^\varepsilon(x,\omega) \equiv A^\varepsilon(x) + B(x,\omega)$
in~\eqref{0-sto} that are not highly
oscillatory in their stochastic part. 
In such cases, dedicated approaches have been proposed, we refer
to~\cite{Ginting2010}
for more details.
Another approach is to reduce the number of Monte-Carlo simulations used
for the computation of the multiscale basis
functions. In~\cite{Aarnes2009,Dostert2008}, the authors assume that their
coefficient can be written as a Karhunen-Lo\`eve type expansion,
and apply a collocation method to \emph{a priori} choose some sparse
realizations for which they compute the multiscale basis functions. 

In this article, we consider one of the many alternate variants of
problem~\eqref{0-sto}. We suppose that 
$A^\varepsilon(x,\omega)$ is highly oscillatory in
both its deterministic and stochastic components, but that it is a {\em
  perturbation} of a deterministic matrix. More precisely, we 
assume that
\begin{equation}
\label{eq:decompo}
A^\varepsilon(x,\omega) \equiv A_\eta^\varepsilon(x,\omega) 
=
A_0^\varepsilon(x) + \eta A_1^\varepsilon(x,\omega),
\end{equation}
where $A_0^\varepsilon$ is a deterministic matrix and $\eta$ is a small
deterministic parameter. This model may be well suited for heterogeneous
materials (or, more generally, media) that, although not periodic, are {\em not
fully} stochastic, in the sense that they may be considered as a
{\em perturbation} of a deterministic material. We call this setting the
{\em weakly stochastic setting}. Note that many practical situations,
involving actual materials or media, can be considered, at a good level
of approximation, as perturbations of a deterministic (often periodic)
setting (see e.g.~\cite{enumath}).

In a series of recent works (see~\cite{cras-stoc,jmpa,cras-ronan}
and~\cite{cras-arnaud,arnaud1,arnaud2}; see also~\cite{Bris-sg} for a
unified presentation), we have considered such a setting, in the
context of homogenization theory (the matrix $A_\eta^\varepsilon(x,\omega)$ 
in~\eqref{0-sto}-\eqref{eq:decompo} reads $A_\eta^\varepsilon(x,\omega) =
A_\eta(x/\varepsilon,\omega)$ for a stationary matrix $A_\eta(x,\omega)$, which
is, in a sense to be made precise, a perturbation of a periodic
matrix). We have 
shown there that, in such a case, the workload for computing
the homogenized solution is significantly lighter
than for {\em generic} stochastic homogenization, and actually
comparable to the workload for periodic homogenization. 
We will show in the sequel that the MsFEM can be adapted to
this weakly stochastic setting, providing an approximation of the solution
$u^\eps_\eta$ to~\eqref{0-sto}-\eqref{eq:decompo}, for fixed $\eps$, at a much
smaller computational cost than the direct approach.

The main idea of our proposed approach is to compute a set
of {\em deterministic} MsFEM basis functions using $A_0^\varepsilon$, the
deterministic part of $A_\eta^\varepsilon$ in the
expansion~\eqref{eq:decompo}, and then to perform Monte 
Carlo realizations at the macroscale level using a set of ${\cal M}$
realizations of the random matrix
$\left\{A^{\varepsilon,m}_\eta(x,\omega)\right\}_{1\leq m \leq {\cal M}}$ (see
Section~\ref{sec:msfem} for a detailed presentation). Note
that, for each of these realizations, we solve the {\em original} problem, with
the {\em complete} matrix $A^\varepsilon_\eta$, and not only its
deterministic part. Only the basis set is taken
deterministic. By construction, the approach provides an
approximation 
$$
u_S(x,\omega) = \sum\limits_{i=1}^L (U_S(\omega))_i \ \phi_i^\varepsilon(x)
$$
of $u^\varepsilon_\eta(x,\omega)$, where the basis functions $\phi_i^\varepsilon$ are {\em deterministic}. 
These basis functions are computed only once, hence the cost to
compute $\left\{u_S^m(x,\omega)\right\}_{1\leq m\leq {\cal M}}$ is much smaller
than the cost to compute $\left\{u_M^m(x,\omega)\right\}_{1\leq
  m\leq {\cal M}}$. This is especially true if~\eqref{0-sto} has to be solved
for many right-hand sides $f$. We
expect that this approximation $u_S$ is as accurate as 
$u_M$ for small $\eta$. We show below that this is indeed the case, when
$A^\varepsilon_\eta$ is a perturbation of $A^\varepsilon_0$ (see
Section~\ref{sec:num} for numerical tests).

\medskip

We would like to note that the MsFEM is not the only multiscale
technique based on finite elements. The bottom line of our approach,
consisting of generating suitable multiscale functions for the
discretization of a weakly stochastic problem, using for this purpose
the deterministic reference problem, can in principle be applied to
other multiscale techniques. Another popular technique is the HMM
method~\cite{hmm_init,hmm,hmm_review}, 
for which our approach could in principle be easily adapted. 

\medskip

In the numerical tests reported on in Section~\ref{sec:num}, we compare, in the
$H^1$ norm,
$u^\varepsilon_\eta$ (the exact solution to~\eqref{0-sto} with the
matrix $A^\varepsilon \equiv A^\varepsilon_\eta$ given by~\eqref{eq:decompo}) 
with $u_S$ (the solution provided by our approach) and $u_M$ (the
solution provided by the ideal, expensive approach). The quantity 
$\|u^\varepsilon_\eta-u_M\|_{H^1({\cal D})}$ somewhat represents the best possible
accuracy that we can achieve, in the sense that our approach inherits the
limitations of the MsFEM approach. We thus cannot expect our
approximation $u_S$ to be more accurate than $u_M$. We can only hope to compute an approximation of comparable quality with a much reduced workload. The numerical
results we obtain confirm that, for small $\eta$ in~\eqref{eq:decompo},
the quantity $\|u_S-u_\eta^\varepsilon\|_{H^1({\cal D})}$ is of the same
order of magnitude as $\|u_M - u_\eta^\varepsilon\|_{H^1({\cal D})}$,
although, we repeat it, the computational cost to compute $u_S$ is much
smaller than that to compute $u_M$.  

\medskip

We next derive error bounds for our approach in
Section~\ref{sec:analysis}. We recall that, in the
deterministic setting, a classical context for proving convergence of
the MsFEM approach is the case when, in the reference problem~\eqref{0},
the matrix reads 
$\dps A^\varepsilon(x) = A_{per}\left(\frac{x}{\eps}\right)$ for a fixed
periodic matrix $A_{per}$. Likewise, to be able to perform our
theoretical analysis in the stochastic setting, we assume in Section~\ref{sec:analysis} that
$\dps A_\eta^\varepsilon(x,\omega) = 
A_\eta \left(\frac{x}{\eps},\omega\right)$ for a fixed stationary
random matrix $A_\eta$. The problem~\eqref{0-sto}-\eqref{eq:decompo}
then admits a homogenized limit when $\eps$ vanishes. 

Our proof follows the same lines as that in the deterministic
setting, which we now briefly review (see the introduction of
Section~\ref{sec:analysis} for more details on the structure of the proof). 
The MsFEM is a Galerkin approximation, that we assume momentarily, for the sake
of clarity, to be a {\em conforming} approximation (this is indeed the
case when, for defining the highly oscillatory basis functions 
$\phi_i^\varepsilon$, oversampling is {\em not} used). The error
is then estimated using the C\'ea lemma:
$$
\|u^\eps - u_M \|_{H^1({\cal D})}
\leq C \inf\limits_{v_h \in \mathcal{W}_h} 
\|u^\eps - v_h\|_{H^1({\cal D})}, 
$$
where $u^\eps$ is the solution to the reference deterministic highly oscillatory
problem~\eqref{0}, $u_M$ is the MsFEM solution and $C$ is a constant
independent of $\eps$ and $h$. Taking advantage of the
homogenization setting, we introduce the two-scale expansion 
$$
v^\eps = u^\star + \varepsilon \sum_{i=1}^d 
\wper_{e_i} \left( \frac{\cdot}{\varepsilon} \right) \partial_i u^\star
$$
of $u^\eps$, where $u^\star$ is the homogenized solution,
$\wper_{e_i}$ is the periodic corrector associated to $e_i \in
\RR^d$, and $\partial_i u^\star$ denotes the partial derivative $\dps
\frac{\partial u^\star}{\partial x_i}$. We next write  
$$
\|u^\eps - u_M \|_{H^1({\cal D})}
\leq C \left( \|u^\eps - v^\eps \|_{H^1({\cal D})}
+
\inf\limits_{v_h \in \mathcal{W}_h} \|v^\eps - v_h\|_{H^1({\cal D})} \right).
$$
The first term in the
above right-hand side is estimated using standard homogenization
results on the {\em rate} of convergence of $v^\eps - u^\eps$. To estimate the second term, one considers a
suitably chosen element $v_h \in \mathcal{W}_h$, for which 
$\dps \|v^\eps - v_h\|_{H^1({\cal D})}$ can be directly bounded.

Following the same strategy in our stochastic setting, we estimate the distance between the solution $u_\eta^\varepsilon$ to the reference stochastic
problem~\eqref{0-sto}-\eqref{eq:decompo}
and the weakly stochastic MsFEM solution $u_S$ as
$$
\|u^\eps_\eta(\cdot,\omega) - u_S(\cdot,\omega) \|_{H^1({\cal D})}
\leq C \left( 
\|u^\eps_\eta(\cdot,\omega) - v^\eps_\eta(\cdot,\omega) \|_{H^1({\cal D})}
+
\inf\limits_{v_h \in \mathcal{W}_h} 
\|v^\eps_\eta(\cdot,\omega) - v_h\|_{H^1({\cal D})} \right).
$$
We observe that a key ingredient for
the proof is the rate of convergence of the difference between the reference
solution $u^\eps_\eta$ and its two-scale expansion $v^\eps_\eta$. Such a
result is classical in periodic homogenization, but, to the best of our
knowledge, open in
the {\em general} stationary case (in dimensions higher than
one). One only knows that $u^\eps_\eta - v^\eps_\eta$ vanishes (in some
appropriate norm) when $\eps \to 0$.  
However, in the particular case when $A_\eta^\varepsilon$ is only
{\em weakly} stochastic, it is possible to obtain such a result, as we have
shown in~\cite{rate}. Hence, exploiting the specificity of our
weakly stochastic setting, we are able to obtain (see our main result,
Theorem~\ref{theo:H1} and estimate~\eqref{est-H1}):
$$
\sqrt{ \esp \left[ \|u^\varepsilon_\eta - u_S\|^2_\h \right] } \leq C 
\left( \sqrt{\eps} + h + \frac{\varepsilon}{h} + \eta \left(
    \frac{\varepsilon}{h} \right)^{d/2} \ln(N(h)) 
+ \eta + \eta^2 \mathcal{C}(\eta) \right),
$$
where $\| \cdot \|_\h$ is a broken $H^1$ norm, $C$ is a constant
independent of $\varepsilon$, $h$ and $\eta$, 
$\mathcal{C}$ is a bounded function as $\eta$ goes to $0$, and $N(h)$ is
the number of elements in the mesh (roughly of order $h^{-d}$ in
dimension $d$). 
As is often the case in the deterministic setting, we
use here (both for our numerical tests and in the analysis) the
oversampling technique. Consequently, the basis functions 
$\phi_i^\eps$ do not belong to $H^1_0({\cal D})$, hence the use of a
broken $H^1$ norm in the above estimate. 
As we point out below, when $\eta=0$
in~\eqref{eq:decompo}, our approach reduces to the standard
deterministic MsFEM (with oversampling), and the above
estimates then agree with those proved in~\cite{Efendiev2000}. 

\bigskip

This article is organized as follows. First, in
Section~\ref{sec:msfem}, we describe the MsFEM approach. For consistency, we begin by
the deterministic setting in Section~\ref{sec:msfem_det}, and point out
there that the direct adaptation to the general 
stochastic setting yields a prohibitively expensive approach. The
adaptation of the approach to the {\em 
  weakly stochastic setting} is described in
Section~\ref{sec:weak-stoch}. We next turn to numerical simulations, in
Section~\ref{sec:num}. Some procedures to efficiently implement the
approach are first described in Section~\ref{sec:implem}. We next
consider a one-dimensional test (see Section~\ref{sec:test-1d}), which
is useful for several reasons. First, it allows to calibrate some
numerical parameters, such as the number $M$ of independent realizations
when estimating the exact expectation by an empirical mean. Second, we
assess the accuracy of our approach with respect to the magnitude of $\eta$. We
demonstrate there that $\eta$ does not have to be extremely small for
our method to be very efficient. For instance, on the test case
considered in Section~\ref{sec:test-1d}, we show that our approach is as
accurate as the expensive, direct approach as soon as $\eta$ is such that
$$
\left\|
\frac{\eta a^\varepsilon_1}{a_0^\varepsilon}\right\|_{L^\infty(\RR\times
\Omega)} 
\text{is equal to or smaller than 0.1},
$$
where $a_0^\varepsilon$ is the deterministic component of the diffusion
coefficient $a^\eps_\eta$ and $\eta a_1^\varepsilon$ is the stochastic
component (see expansion~\eqref{eq:decompo}). Lastly, we also assess 
the accuracy of our approach with respect to 
the presence of frequencies in the random coefficient $a^\eps_\eta$ that
are not taken into account in the MsFEM basis set. 
We next turn to two test cases in dimension two, where we observe that
our approach behaves as well as in the one-dimensional case (see
Section~\ref{sec:2d-test}). In particular, in
Section~\ref{sec:test-2d-2}, we successfully address a classical
test-case of the literature. 

Section~\ref{sec:analysis} is devoted to the analysis of the approach,
in the homogenization setting. Our main result, Theorem~\ref{theo:H1}, is
presented in Section~\ref{sec:estim}, and proved in
Section~\ref{sec:proof_main}. The proofs of some technical results are
collected in Appendix~\ref{sec:tech-proof}. 
In addition, we specifically consider the one dimensional case in
Section~\ref{sec:1d}.

\section{MsFEM-type approaches}
\label{sec:msfem}

For consistency and the convenience of the reader, 
we present in this section the MsFEM approach to solve the original elliptic 
problem~\eqref{0}. For
clarity, we begin by presenting the approach in a deterministic
setting. The reader familiar with the MsFEM may easily skip this section
and directly proceed to Section~\ref{sec:weak-stoch}, where we present
our approach in a weakly stochastic setting.

\subsection{Description in a classical deterministic setting}
\label{sec:msfem_det}

Let $u^\varepsilon \in H^1(\mathcal{D})$ be the
solution to~\eqref{0}, where the matrix 
$A^{\varepsilon} \in (L^\infty(\mathcal{D}))^{d \times d}$ satisfies the
standard coercivity condition: there exists two constants 
$a_+ \geq a_- > 0$ such that
$$
\forall \eps, \quad
\forall x \in {\cal D}, \quad 
\forall \xi \in \RR^d, \quad
a_- |\xi|^2 \leq \xi^T A^\varepsilon(x) \xi 
\quad \text{and} \quad
\| A^{\varepsilon} \|_{L^\infty(\mathcal{D})} 
\leq a_+.
$$
Note that the MsFEM approach is not restricted to the
periodic setting. We therefore do not assume that
$A^\varepsilon(x)=A_{per}(x/\varepsilon)$ for a fixed periodic matrix
$A_{per}$. 

The MsFEM approach consists in performing a variational approximation of
(\ref{0}) where the basis functions are {\em precomputed}
and \emph{encode the fast oscillations} present in (\ref{0}). In
the sequel we argue on the following formulation, equivalent
to~(\ref{0}):  
\begin{equation}
 \mbox{Find }u^\varepsilon \in H^1_0(\mathcal{D}) \mbox{ such that, for
   any $v \in H^1_0(\mathcal{D})$}, \quad
 {\cal A}_\varepsilon(u^\varepsilon,v)=b(v),
\label{eq:det-var}
\end{equation}
where
$$
{\cal A}_\varepsilon(u,v)= \int_\mathcal{D}
(\nabla v(x))^T A^\varepsilon(x) \nabla u(x) \, dx 
\ \mbox{ and } \ 
b(v) = \int_{\mathcal{D}}f(x) \: v(x) \, dx.
$$
The MsFEM is a three-step approach:
\begin{enumerate}
\item introduce a standard discretization of the domain $\mathcal{D}$
  using a coarse mesh as compared to the small scale oscillations of
  $A^\varepsilon$.  
\item for each element $\cell$ of the coarse mesh, compute the basis
  function $\phi^{\varepsilon,\cell}_i$ as the solution of an elliptic
  equation posed in $\cell$ (see
  e.g.~\eqref{PB:MsFEM-Over-nb}-\eqref{eq:MsFEM-Over-nb-1}-\eqref{eq:MsFEM-Over-nb-2} below). 
\item solve the Galerkin approximation of (\ref{eq:det-var}), for the
  set of basis functions defined at Step $2$. 
\end{enumerate}
The advantage of the approach is that, for the same accuracy of the
approximation as that provided by a standard FEM, 
the macroscale mesh can be chosen sufficiently coarse so that the
resulting discretized problem has a limited number of degrees of
freedom, and may thus be computationally solved inexpensively. This is
observed in practice~\cite{hou1997}, and proven by a theoretical
analysis (see~\cite{hou1999,Efendiev2000}) when the problem
(\ref{eq:det-var}) admits a homogenized limit. See
also~\cite{Efendiev2009} and references therein.

\medskip

To further illustrate this fact, we reproduce here a simple
one-dimensional analysis we borrow from
A. Lozinsky (see~\cite[Chap. 6]{lozinski_hdr} and~\cite{carballal}). 
This analysis explains remarkably well the interest of the
approach, and, in contrast to~\cite{hou1999,Efendiev2000}, is not
restricted to a homogenization setting. Consider the one-dimensional
domain ${\cal D} = (0,1)$ and the reference problem
$$
{\cal L} u = f, \quad u(0) = u(1) = 0,
$$
for the operator ${\cal L}u := -(\nu u')'$, where $f \in L^2(0,1)$ and $\nu \in L^\infty(0,1)$ with $\nu(x) \geq
\nu_{min} > 0$ almost everywhere on $(0,1)$. The function $\nu$ may have
oscillations at a small scale. The associated weak formulation reads
\begin{equation}
\label{eq:fv_1d}
\text{Find $u \in H^1_0(0,1)$ such that, for any $v \in H^1_0(0,1)$,
\quad $a(u,v) = b(v)$},
\end{equation}
with
$$
a(u,v) = \int_0^1 \nu(x) u'(x) v'(x) \, dx 
\quad \text{and} \quad
b(v) = \int_0^1 f(x) v(x) \, dx.
$$ 
We now introduce the nodes $0 = x_0 < x_1 < \dots < x_L = 1$ that define
the elements $K_i = [x_{i-1},x_i]$. Let $h = \max |x_i - x_{i-1} |$ be
the mesh size. The multiscale finite element space 
\begin{equation}\label{def:whlozin}
{\cal W}_h = \left\{v_h \in C^0(0,1) \text{ such that ${\cal L} v_h = 0$ on
  each $K_i$} \right\},
\end{equation}
defined using the operator ${\cal L}$, is
{\em adapted} to the problem under study. We next proceed with a Galerkin
approximation of~\eqref{eq:fv_1d} using the space ${\cal W}_h$:
$$
\text{\em Find $u_h \in {\cal W}_h$ such that, for any $v_h \in {\cal W}_h$,
\quad $a(u_h,v_h) = b(v_h)$}.
$$
The solution $u_h$ then satisfies
\begin{equation}
\label{eq:estim_1d}
\| u - u_h \|_E \leq \frac{h}{\pi \sqrt{\nu_{min}}} \|f \|_{L^2(0,1)}
\end{equation}
where $\| \cdot \|_E = \sqrt{a(\cdot,\cdot)}$ is the energy norm. The
proof of this estimate goes as follows. By definition of $u$ and $u_h$,
we have $a(u-u_h,v_h) = 0$ for any $v_h \in {\cal W}_h$. Hence, $u_h$ is
the orthogonal projection of $u$ on ${\cal W}_h$ according to the scalar
product $a(\cdot,\cdot)$. 
Since $\| \cdot \|_E$ is the norm associated to that scalar product, we
have 
\begin{equation}
\label{eq:proj_1d}
\| u - u_h \|_E = \inf_{v_h \in {\cal W}_h} \| u - v_h \|_E.
\end{equation}
Choose $v_h$ to be the finite element interpolant of $u$, which is
defined by $v_h(x_i) = u(x_i)$ for any $i = 0, 1, \dots,L$, 
and consider the interpolation error $e = u - v_h$. On each element
$K_i$, we have, precisely because the space $\mathcal{W}_h$ is defined as~\eqref{def:whlozin}, 
$$
{\cal L} e = -(\nu e')' = f 
\quad \text{with} \quad e(x_{i-1}) = e(x_i) = 0.
$$
We multiply by $e$, integrate by part and obtain 
\begin{equation}
\label{eq:1d_interm}
\int^{x_i}_{x_{i-1}}
\nu(x) |e'(x)|^2 \, dx 
=
\int^{x_i}_{x_{i-1}} f(x) e(x) \, dx
\leq 
\|f\|_{L^2(K_i)} \|e\|_{L^2(K_i)}.
\end{equation}
Since $e$ vanishes on the boundary of $K_i$, the
Poincar\'e inequality with the best constant $(x_i - x_{i-1})/\pi$ yields
$$
\| e \|_{L^2(K_i)} 
\leq 
\frac{x_i - x_{i-1}}{\pi} \| e' \|_{L^2(K_i)}
\leq 
\frac{h}{\pi \sqrt{\nu_{min}}} 
\left( \int^{x_i}_{x_{i-1}} \nu(x) |e'(x)|^2 \, dx \right)^{1/2}.
$$
By substitution in~\eqref{eq:1d_interm}, we obtain
$$
\int^{x_i}_{x_{i-1}}
\nu(x) |e'(x)|^2 \, dx
\leq 
\frac{h^2}{\pi^2 \nu_{min}}
\|f\|^2_{L^2(K_i)}.
$$
Summing over the elements and using~\eqref{eq:proj_1d}
yields~\eqref{eq:estim_1d}. Using again that $\nu$ is bounded from
below, we deduce from~\eqref{eq:estim_1d} that
$$
\| u - u_h \|_{H^1(0,1)} \leq \frac{h}{C_{\cal D} \, \pi \, \nu_{min}} 
\|f \|_{L^2(0,1)},
$$
where $C_{\cal D}$ is the Poincar\'e constant of the domain ${\cal D} = (0,1)$.
As pointed out
in~\cite[Chap. 6]{lozinski_hdr}, the interest of the above estimate (or of
estimate~\eqref{eq:estim_1d}) lies in the fact that the constant in the
right-hand side only depends on $\nu$ through $\nu_{min}$, and remains
the same even if $\nu$ oscillates at a small scale. In contrast, for a
standard finite element method, the error is also proportional to $h$,
but with a constant that depends on the $H^2$ norm of the exact solution
$u$. With a standard finite element space $\mathcal{W}_h$, we indeed
classically deduce by C\'ea's lemma that
$$
\|u-u_h\|_{H^1(0,1)} 
\leq 
\frac{\| \nu \|_{L^\infty(0,1)}}{C_{\cal D} \, \nu_{min}} 
\inf\limits_{v_h \in \mathcal{W}_h} \|u-v_h\|_{H^1(0,1)} 
=
\frac{\| \nu \|_{L^\infty(0,1)}}{C_{\cal D} \, \nu_{min}} \
\|u - R_h u\|_{H^1(0,1)},
$$
where $C_{\cal D}$ is the Poincar\'e constant of the domain ${\cal D} =
(0,1)$, and $R_h u$ is the projection of $u$ on $\mathcal{W}_h$
according to the $H^1$ scalar product. 
We thus obtain that
$$
\|u-u_h\|_{H^1(0,1)} \leq C h \frac{\| \nu
  \|_{L^\infty(0,1)}}{\nu_{min}} \, \|D^2u\|_{L^2(0,1)},
$$
where $C$ is independent from the functions $\nu$ and $u$. If $\nu$
oscillates at a 
small scale (e.g. $\nu(x) = \bar
\nu(x/\eps)$ for a fixed function $\bar \nu$), the $H^2$ norm of
$u$ may be large (of the order of $\eps^{-1}$). A FEM approach then
requires $h$ to be smaller than $\eps$ to reach a good
accuracy.

We conclude this illustration by noting that such a general analysis of
the MsFEM approach is
not available in dimension $d \geq 2$. The analysis presented
in~\cite{hou1999,Efendiev2000}, which is performed without any
restriction on the dimension, additionally assumes that the matrix
$A^\varepsilon$ in (\ref{eq:det-var}) reads
$A^\varepsilon(x) = A_{per}(x/\varepsilon)$ for a fixed {\em periodic}
matrix $A_{per}$. 

\medskip

We now describe the MsFEM in a multidimensional setting.

\paragraph{Definition of the coarse mesh}

For simplicity (see Remark~\ref{rem:P2} below), we consider a classical
$\mathbb{P}_1$ discretization of the domain $\mathcal{D}$. We denote by
$\mathcal{T}_h$ the corresponding mesh, with $L$ nodes. Let
$\phi^{0}_i$, $i=1,\cdots,L$, be the basis functions. We introduce the
finite element space
$$
\mathcal{V}_h := \text{span}(\phi_i^0,i=1,\cdots,L),
$$
and define the restriction 
$$
 \phi^{0,\cell}_i:=\left.\phi^{0}_i\right|_\cell
$$
of these functions in each element $\cell$.
\begin{remark}
\label{rem:P2}
We refer to \cite{Allaire2005} for a presentation of a
MsFEM method that uses $\mathbb{P}_2$ macroscale basis functions.
\end{remark}

\paragraph{Definition of the MsFEM  basis}

Several definitions of the MsFEM basis functions have been proposed in
the literature (see e.g.
\cite{hou1997,hou1999,Efendiev2000,Allaire2005,Efendiev2009,hou2004}).
They all follow the same pattern but they give rise to various
methods. We present in the following the particular method that we have
implemented. It makes use of the oversampling technique introduced
in~\cite{hou1997} and developed in~\cite{Gloria2008}. 

For any element $\cell$, we consider a domain $\mathbf{S} \supset \cell$
(see Figure~\ref{Fig:OverS}), obtained from $\cell$ by an homothetic
transformation of center the centroid of $\cell$, and of 
ratio larger than $1$.
  
\begin{figure}[htbp]
 \centering
\includegraphics[scale=0.3]{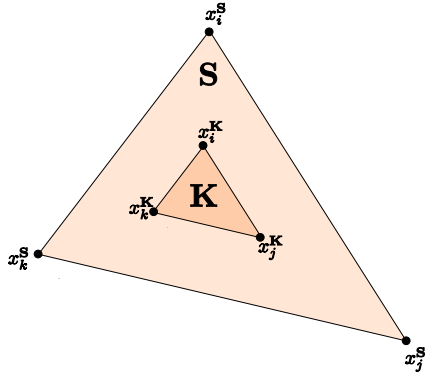}
 \caption{Definition of $\mathbf{S}$ (in 2D for clarity)}
 \label{Fig:OverS}
\end{figure}

Let $x^\mathbf{S}_j$ denote the coordinate of the vertex $j$ of the
domain $\mathbf{S}$. For any vertex $i$ of $\mathbf{S}$, we introduce
the affine function $\chi^{0,\mathbf{S}}_i$ (defined on $\mathbf{S}$)
that satisfies the condition
$\chi^{0,\mathbf{S}}_i(x^\mathbf{S}_j)=\delta_{ij}$ for all $j$. Let 
$\chi^{\varepsilon,\mathbf{S}}_i \in H^1(\mathbf{S})$ be the unique
solution to the problem
\begin{equation}
-\mbox{div}\left[A^{\varepsilon}(x)\nabla
   \chi^{\varepsilon,\mathbf{S}}_i(x) \right] = 0 
\ \ \mbox{in $\mathbf{S}$}, 
\quad
\chi^{\varepsilon,\mathbf{S}}_i = \chi^{0,\mathbf{S}}_i
\ \ \mbox{on $\partial \mathbf{S}$},
\label{PB:MsFEM-Over-nb}
\end{equation}
which, in practice, is numerically solved e.g. using a finite
element method with a mesh size adapted to the small scale~$\varepsilon$. 
We then define the local basis functions
\begin{equation}
\phi_i^{\varepsilon,\cell}=\sum\limits_{j=1}^{d+1} \alpha_{ij}
 \left. \chi^{\varepsilon,\mathbf{S}}_j \right|_{\cell}
\label{eq:MsFEM-Over-nb-1}
\end{equation}
as linear combinations of the restrictions of
$\chi^{\varepsilon,\mathbf{S}}_i$ on~$\cell$, with $\alpha_{ij}$
chosen such that 
\begin{equation}
\forall 1 \leq i,j \leq d+1, \quad 
\phi_i^{0,\cell}(x^\cell_j)=\sum\limits_{j=1}^{d+1} \alpha_{ij}
 \chi^{0,\mathbf{S}}_j(x^\cell_j)
=\delta_{ij},
\label{eq:MsFEM-Over-nb-2}
\end{equation}
where $x^\cell_j$ denotes the coordinate of the $j$th vertex of the
element $\cell$. Note that the condition~\eqref{eq:MsFEM-Over-nb-2} is
enforced on the function $\phi_i^{0,\cell}$, and not on
$\phi_i^{\varepsilon,\cell}$. The coefficients $\alpha_{ij}$ are
consequently independent from $\varepsilon$. As $\phi_i^{0,\cell}$ and
$\left. \chi^{0,\mathbf{S}}_j \right|_{\cell}$ are both affine on
$\cell$, condition~\eqref{eq:MsFEM-Over-nb-2} implies that
\begin{equation}
\forall 1 \leq i \leq d+1, \quad \forall x \in \cell, \quad 
\phi_i^{0,\cell}(x)=\sum\limits_{j=1}^{d+1} \alpha_{ij}
\chi^{0,\mathbf{S}}_j(x).
\label{eq:MsFEM-Over-nb-3}
\end{equation}
We next introduce the functions $\phi_i^\varepsilon$ defined on
${\mathcal D}$ by 
$\left.\phi_i^\varepsilon\right|_\cell = \phi_i^{\varepsilon,\cell}$ for
all elements $\cell$.

Note that the problems (\ref{PB:MsFEM-Over-nb}), indexed by
$\mathbf{S}$,  are all independent from one another. They may be
solved in parallel.  

\paragraph{Macroscale problem}

We now introduce the finite dimensional space
$$
\mathcal{W}_h := \mbox{span}(\phi_i^{\varepsilon},\ i=1,\cdots,L),
$$
and proceed with the approximation
\begin{equation}
\mbox{\em Find $u_M \in \mathcal{W}_h$ such that, for any
  $v \in \mathcal{W}_h$,
\quad ${\cal A}^h_\varepsilon(u_M,v)=b(v)$},
\label{eq:var-mac-det}
\end{equation}
of (\ref{eq:det-var}), where
$$
{\cal A}^h_\varepsilon(u,v)= \sum_\cell \int_\cell
(\nabla v(x))^T 
A^\varepsilon(x) \nabla u(x) \, dx 
\ \mbox{ and } \ 
b(v) = \int_{\mathcal{D}} f(x) \: v(x) \, dx.
$$
Observe that $\phi_i^\varepsilon$ has jumps across the edges of the
triangulation (due to the use of the oversampling technique), hence
$\mathcal{W}_h \not\subset H^1(\mathcal{D})$, thus 
the broken integral used to define ${\cal A}^h_\varepsilon(u,v)$. On the
other hand, since $\mathcal{W}_h \subset L^2(\mathcal{D})$, the linear
form $b$ is well defined for $v \in \mathcal{W}_h$. The
formulation (\ref{eq:var-mac-det}) is a {\em non-conforming} Galerkin
approximation of (\ref{eq:det-var}). This brings additional
error terms in the error estimation (see Lemma~\ref{lem:Strang2} in
Section~\ref{sec:analysis}). On another note, remark that the dimension of
$\mathcal{W}_h$ is equal to $L$. The formulation (\ref{eq:var-mac-det})
hence requires solving a linear system with only a limited number of
degrees of freedom. 

\bigskip

We are now in position to substantiate our claim in the introduction,
where we briefly mentioned that, in the stochastic setting, a direct
application of the MsFEM to approximate the solution to~\eqref{0-sto} is
{\em unpractical}. It would indeed 
lead to compute, for {\em each realization} of $A^\eps(x,\omega)$, first
a basis set and second a macroscale solution. This approach has been
briefly examined theoretically in~\cite{Chen2008a}. It is prohibitively
expensive. We therefore turn to an alternate approach.

\subsection{A weakly stochastic setting}
\label{sec:weak-stoch}

We now restrict the general setting and propose a dedicated, practical MsFEM
type approach. Following up on previous works
(see~\cite{Bris-sg,Blanc2010,Ronan2010,enumath}) and as announced
in~\eqref{eq:decompo}, we assume here 
that the random matrix $A^\varepsilon(x,\omega)$ in (\ref{0-sto}) is
a {\em perturbation of a deterministic matrix}, in the sense that 
\begin{equation}
\label{def:A-eta} 
A^\varepsilon(x,\omega) \equiv A^\varepsilon_\eta(x,\omega)
= A_0^\varepsilon(x) +\eta \: A^\varepsilon_1(x,\omega),
\end{equation}
where $\eta \in \RR$ is a
small deterministic parameter, $A^\varepsilon_0$ and $A^\varepsilon_1$
are bounded matrices, and $A^\varepsilon_0$ is
coercive, uniformly in $\varepsilon$. We also assume that the matrix
$A_\eta^\varepsilon$ itself 
satisfies the coercivity and boundedness assumptions, uniformly in
$\eta$ and $\varepsilon$ (we refer to~\cite{cras-arnaud,arnaud1,arnaud2}
and~\cite{jmpa,cras-ronan} for other perturbative settings). 

The principle of the proposed approach is to compute the MsFEM basis set
of functions with the {\em deterministic part} $A^\varepsilon_0$ of the
matrix $A^\varepsilon_\eta$, and next to
perform Monte-Carlo realizations for the macroscale
problem~\eqref{0-sto}-\eqref{def:A-eta}, where we keep the exact matrix
$A^\varepsilon_\eta$ (and not only its deterministic part).
Following the approach sketched in 
Section~\ref{sec:msfem_det}, we first solve (\ref{PB:MsFEM-Over-nb}),
with $A^\varepsilon(x) \equiv A_0^\varepsilon(x)$, and build the {\em
  deterministic} finite dimensional space
$$
\mathcal{W}_h := \mbox{span}(\phi_i^{\varepsilon}, \ i=1,\cdots,L)
$$
following~\eqref{eq:MsFEM-Over-nb-1}-\eqref{eq:MsFEM-Over-nb-2}.
We next proceed with a standard Galerkin approximation of
(\ref{0-sto})-(\ref{def:A-eta}) using $\mathcal{W}_h$. For each
$m\in\{1,\cdots ,M\}$, 
we consider a realization $A^{\varepsilon,m}_\eta(\cdot,\omega)$ and
compute $u^m_S(\cdot,\omega) \in \mathcal{W}_h$ such that 
\begin{equation}
\forall v \in \mathcal{W}_h, \quad  
\sum\limits_\cell \int_\cell (\nabla v(x))^T 
A^{\varepsilon,m}_\eta(x,\omega) \nabla u^m_S(x,\omega) \, dx
= \int_{\mathcal{D}} f(x) \: v(x) \,  dx.
\label{eq:var-mac-stoch}
\end{equation}
Since the MsFEM basis functions are only computed once (rather than for
each realization of $A^\varepsilon_\eta(x,\omega)$), a large
computational gain is expected, and obtained, in comparison to the
direct approach described above.

\section{Numerical simulations}
\label{sec:num}

This section is devoted to the many numerical simulations we have
performed. We first discuss some implementation details. Next, we
numerically estimate the performance of our
approach on various test cases, and assess its sensitivity with respect to the 
magnitude of $\eta$. We consider in Section~\ref{sec:test-1d} a test
case in dimension one. In Section~\ref{sec:2d-test}, we next study two
test cases in dimension two. We also study how the presence in
$A_1^\eps$ (the random component of the matrix $A^\eps_\eta$) of high
frequencies that are not present in the deterministic
component $A_0^\eps$, and that are thus not encoded in the highly
oscillatory basis functions, affects the accuracy of our approach. 

\medskip

Let $u^\varepsilon_\eta$ be the reference solution
to~\eqref{0-sto}-\eqref{eq:decompo} obtained using a finite element method
with a mesh size adapted to the small scale $\varepsilon$,
$u_S$ be the approximation given by our approach (described in Section
\ref{sec:weak-stoch}) and $u_M$ be the approximation given by the direct
approach (in
which the MsFEM basis set is recomputed for each realization
$A^{\varepsilon,m}_\eta(x,\omega)$, as explained at the end of
Section~\ref{sec:msfem_det}). 
Our goal is to compare the error $u_S-u_\eta^\varepsilon$ of our numerical
approximation with the error $u_M-u_\eta^\varepsilon$ of the direct and
expensive approach. When $\eta$ is small,
we expect the approximation $u_S$ to be essentially as accurate
as the approximation $u_M$, and we show below that this is indeed the case.

In the sequel, we assess the accuracy using the estimators
\begin{equation}
e_{L^2}(u_1,u_2)=\esp\left(\frac{\|u_1 - u_2 \|_{L^2({\cal
        D})}}{\|u_2\|_{L^2({\cal D})}}\right)
\quad \text{and} \quad
e_{H^1}(u_1,u_2)=\esp\left(\frac{\|u_1 - u_2 \|_{\h}}{\|u_2\|_{\h}}\right),
\label{eq:err-estim}
\end{equation}
where $u_1$ and $u_2$ are the solutions
obtained with any two different methods, and 
\begin{equation}
\label{eq:broken_H1}
\| u \|_\h := \left(\sum\limits_{\cell \in \mathcal{T}_h} 
\| u \|^2_{H^1(\cell)}\right)^{1/2}
\end{equation}
is the
broken $H^1$ norm. The expectation is in turn
computed using a Monte-Carlo method. Considering $M$ realizations
$\left\{ X_m(\omega)\right\}_{1\leq m\leq M}$ of a random variable, e.g.
$\dps X(\omega)=\frac{\|u_1(\cdot,\omega) - u_2(\cdot,\omega)
  \|_{\h}}{\|u_2(\cdot,\omega)\|_{\h}}$, we
compute the empirical mean 
$\mu_M$ and the empirical standard deviation $\sigma_M$ as 
\begin{equation}
\label{def:muM}
\mu_M(X)      = \frac{1}{M}\sum\limits_{m=1}^M X_m(\omega), 
\quad \quad
\sigma_M^2(X) = \frac{1}{M-1}\sum\limits_{m=1}^M
\left(X_m(\omega)-\mu_M(X)\right)^2.
\end{equation}
As a classical consequence of the Central Limit Theorem, the following
estimate is commonly employed:
$$
\left|\esp(X)-\mu_M(X)\right|\leq 1.96 \frac{\sigma_M}{\sqrt{M}}.
$$
It provides a practical evaluation of $\esp(X)$ from the knowledge of
$\mu_M(X)$ and $\sigma_M(X)$. The numerical parameters have been
determined by an empirical study of 
convergence. For instance, for the reference solution, we choose the
mesh size $h$ such that the quantity 
$\dps 
\frac{\|u_\eta^{\varepsilon,h}-u_\eta^{\varepsilon,h/2}\|_{H^1({\cal
      D})}}{\|u_\eta^{\varepsilon,h/2}\|_{H^1({\cal D})}}$ 
is smaller than 0.03 \%,
thereby formally admitting that the 
approximation has converged in $h$. The MsFEM parameters are determined
likewise. 

\medskip

All the computations have been performed using FreeFem++~\cite{hecht},
with the MPI tools. 

\subsection{Implementation details}
\label{sec:implem}

In the deterministic version of the MsFEM, the same matrix
$A^\varepsilon$ appears in the definition (\ref{PB:MsFEM-Over-nb}) of
the basis functions and in the macroscale variational formulation
(\ref{eq:var-mac-det}). This can be used to expedite the computation of
the stiffness matrix associated with (\ref{eq:var-mac-det}). In our
approach, described in Section \ref{sec:weak-stoch}, the
matrix that appears in the definition of the basis functions is
$A_0^\varepsilon$, whereas the macroscale variational problem involves
$A^\varepsilon_\eta \equiv A_0^\varepsilon+\eta A^\varepsilon_1$. An
additional numerical computation is thus needed.

To solve (\ref{eq:var-mac-stoch}), we need to compute, for each element
$\cell$ and each realization $A^{\varepsilon,\m}_\eta(x,\omega)$, the integrals 
\begin{equation}
\mathcal{K}^{\eta,\m}_{ij}(\omega)
=
\int_{\cell} \left(\nabla \phi^{\varepsilon,\cell}_i(x)\right)^T
A^{\varepsilon,\m}_\eta(x,\omega) \nabla \phi^{\varepsilon,\cell}_j(x) \,
dx,
\label{eq:Ass-int}
\end{equation}
where $\phi_i^{\varepsilon,\cell}$ are deterministic functions. 
We recall that 
$A^\varepsilon_\eta(x,\omega) = 
A_0^\varepsilon(x) + \eta A^\varepsilon_1(x,\omega)$
(see~\eqref{def:A-eta}). To allow for an efficient evaluation
of~\eqref{eq:Ass-int}, we assume henceforth that $A^\varepsilon_1$
is of the form 
\begin{equation}
\label{structA1}
A^\varepsilon_1(x,\omega)= \sum\limits_{k \in \ZZ^d} 
\mathbf{1}_{Q+k}\left(\frac{x}{\varepsilon}\right) \ X_k(\omega) \
B^k_\varepsilon(x),
\end{equation}
where $Q=(0,1)^d$, where 
$\left(X_k\right)_{k\in \ZZ^d}$ are scalar random variables, and
for any $k\in \ZZ^d$, $x \mapsto B^k_\varepsilon(x) \in \RR^{d
  \times d}$ are some deterministic functions. We comment on this
assumption in Remark~\ref{rem:assum_A1} below. The important consequence
of~\eqref{structA1} is that we can write the integral~\eqref{eq:Ass-int}
as a linear combination of {\em  deterministic integrals} over cells of
size $\varepsilon$, with {\em random} coefficients. To simplify the
notation, we assume that the spatial dimension is $d=2$. We define
$$
p=\left\lfloor \min \left(\frac{y_i}{\varepsilon},\frac{y_j}{\varepsilon},\frac{y_k}{\varepsilon}\right)\right\rfloor, 
\quad
q=\left\lfloor \max \left(\frac{y_i}{\varepsilon},\frac{y_j}{\varepsilon},\frac{y_k}{\varepsilon}\right)\right\rfloor + 1,
$$
and likewise, we define the integers $l$ and $m$ (see Fig.~\ref{Fig:Ass}). We can then write (\ref{eq:Ass-int}) as
\begin{equation}
\mathcal{K}^{\eta,\m}_{ij}(\omega)=\int_{\cell} 
\left(\nabla \phi^{\varepsilon,\cell}_i(x)\right)^T A^{\varepsilon,\m}_\eta(x,\omega) \nabla
\phi^{\varepsilon,\cell}_j(x) \, dx = \mathcal{K}^{0,\cell}_{ij} + \eta
\sum\limits_{\alpha=p}^{q-1} \sum\limits_{\beta=l}^{m-1}
X^{\m}_{\alpha,\beta}(\omega) \mathcal{K}^{1,\cell}_{\alpha\beta i j}, 
\label{eq:Ass}
\end{equation}
where
\begin{eqnarray}
\mathcal{K}^{0,\cell}_{ij} &=& \int_{\cell}
\left(\nabla \phi^{\varepsilon,\cell}_i(x)\right)^T A_0^\varepsilon(x)
\nabla \phi^{\varepsilon,\cell}_j(x) \, dx, 
\label{def:int-Aper} 
\\
\mathcal{K}^{1,\cell}_{\alpha\beta i j} &=&
 \int\limits_{\alpha\varepsilon}^{(\alpha+1)\varepsilon}
 \int\limits_{\beta\varepsilon}^{(\beta+1)\varepsilon}\!\!\!
 \mathbf{1}_\cell(x) \left(\nabla \phi^{\varepsilon,\cell}_i(x)\right)^T
 B_\varepsilon^k(x) \nabla \phi^{\varepsilon,\cell}_j(x) \, dx.
\label{def:int-A1}
\end{eqnarray}

\begin{figure}[htbp]
\begin{center}
\includegraphics[scale=0.3]{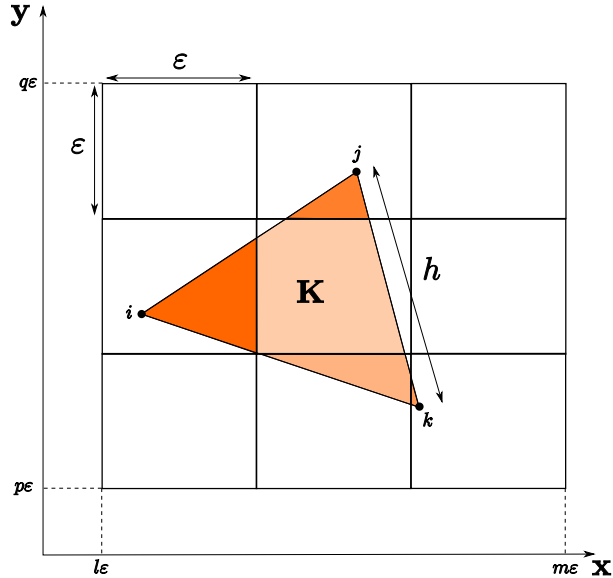}
\end{center}
\caption{To practically compute the integral (\ref{eq:Ass-int}), we
  write that each element $\cell$ (here in dimension $d=2$) is a subset
  of a quadrangle (here $[l \eps, m \eps] \times [p \eps, q \eps]$)
composed of cells of size $\varepsilon^d$.} 
\label{Fig:Ass}
\end{figure}

We thus compute once the deterministic integrals~(\ref{def:int-Aper})
and~(\ref{def:int-A1}). Next, for each realization of $A^\varepsilon_\eta$, we
evaluate the stiffness matrix elements
$\mathcal{K}^{\eta,\m}_{ij}(\omega)$ using the right hand side of
(\ref{eq:Ass}). No numerical quadrature is needed. As a consequence
of~\eqref{structA1}, 
most of the work for assembling the stiffness matrix is only performed once,
independently of the number of Monte Carlo realizations. This
significantly contributes to the gain in term of computational cost. 

\begin{remark}
\label{rem:assum_A1}
Assumption~\eqref{structA1} is quite general, and already covers
many interesting cases in practice. 
As explained above, the point in~\eqref{structA1} is that
$A_1^\eps$ is a {\em direct product} (or here, a sum of direct products) of a
function depending on $x$ with a random variable that only depends on
$\omega$. Otherwise stated, $A_1^\eps(x,\omega)$ depends {\em linearly},
in an explicit way, of $\omega$. A similar assumption is made when
applying reduced basis methods~\cite{RB_general} to a problem of the type
\begin{equation}
\label{eq:rb}
\text{Find $u_\lambda$ such that, for any $v$, $a(u_\lambda,v;\lambda) =
  b(v)$,}
\end{equation}
where $a(\cdot,\cdot;\lambda)$ is a bilinear form parameterized by
$\lambda$. Assume this problem has been solved for some values
$\left\{ \lambda_i \right\}_{i=1}^I$ of the parameter, yielding the
functions $\left\{ u_{\lambda_i} \right\}_{i=1}^I$. Under the assumption that 
$a(\cdot,\cdot;\lambda) = a_0(\cdot,\cdot) + \lambda a_1(\cdot,\cdot)$
(namely, $a(\cdot,\cdot;\lambda)$ depends {\em linearly} on $\lambda$),
one can precompute the stiffness matrix elements
$a_0(u_{\lambda_i},u_{\lambda_j})$ and
$a_1(u_{\lambda_i},u_{\lambda_j})$ for any $1 \leq i,j \leq I$. This
allows to next perform a very efficient Galerkin approximation of the
problem~\eqref{eq:rb} (for any $\lambda$) on the space 
$\text{Span}(u_{\lambda_i},i=1,\cdots,I)$.
\end{remark}

\subsection{One-dimensional test-case}
\label{sec:test-1d}

The purpose of this section is threefold. We first calibrate the number
$M$ of realizations considered for the Monte-Carlo method for the
two-dimensional numerical experiments that we consider in the sequel. We
next investigate how the accuracy of our approach depends on $\eta$ and on
the presence of frequencies in the random coefficient $a^\eps_\eta$ that
are not taken into account in the MsFEM basis set functions. The low
computational costs that we face in this one-dimensional situation
allow us to test our approach more comprehensively than in the
two-dimensional test-cases described below.

Let $\left( X_{k}\right)_{k \in \ZZ}$ denote a
sequence of independent, identically distributed scalar random variables
uniformly distributed in $[0,1]$. We consider the random coefficient
$$
a^\varepsilon_\eta(x,\omega)=\sum\limits_{k \in \ZZ}\mathbf{1}_{(k,k+1]}\left(\frac{x}{\varepsilon}\right)
\, \left(5 + 50 \sin^2\left(\frac{\pi x}{\varepsilon}\right)+\eta X_{k}(\omega) \: \kappa \:
  \sin^2\left(\frac{\zeta\pi x}{\varepsilon}\right)\right), 
$$
which is a particular example of the expansion (\ref{def:A-eta}) with
$$
a_0^\varepsilon(x)= 5 + 50 \sin^2\left(\frac{\pi x}{\varepsilon}\right)
\quad \text{and} \quad
a^\varepsilon_1(x,\omega)= \sum\limits_{k \in \ZZ}
\mathbf{1}_{(k,k+1]}\left(\frac{x}{\varepsilon}\right)\ X_{k}(\omega) \
\kappa \: \sin^2\left(\frac{\zeta \pi x}{\varepsilon}\right),
$$
and that satisfies the structural assumption~\eqref{structA1}.
We set $\varepsilon=0.025$ and choose $\kappa$ such that the quantity
\begin{equation}
\mathcal{K}(\kappa,\zeta)=\left\|
\frac{a^\varepsilon_1}{a_0^\varepsilon}\right\|_{L^\infty({\cal D} \times \Omega)} = \text{SupEss}_{\omega \in \Omega} \left\|
\frac{a^\varepsilon_1(\cdot,\omega)}{a_0^\varepsilon(\cdot,\omega)}\right\|_{L^\infty({\cal D})}
\label{def:control-1d}
\end{equation}
has the same value $\mathcal{K}=1$ for the three different values of
$\zeta = \{1,3,7\}$ we consider below. We analytically compute the
reference function $u_\eta^\varepsilon$, solution to
$$
-\frac{d}{dx}\left(a^\varepsilon_\eta\left(x,\omega\right)\frac{d
    u_\eta^\varepsilon}{dx}(x,\omega)\right) = 1 \ \ \mbox{in $(0,1)$},
\quad
u_\eta^\varepsilon(0,\omega) = u_\eta^\varepsilon(1,\omega) = 0,
$$
as well as the MsFEM basis functions for both approaches. 
Let $u_M$ and $u_S$ be the approximation of
$u_\eta^\varepsilon$ by the two MsFEM approaches described above, where
the coarse mesh size is $h=1/30$. 

\medskip

We first calibrate the number of independent realizations to accurately
approximate the exact expectation in~\eqref{eq:err-estim} by the
empirical mean~\eqref{def:muM}. To this aim, we 
present on Fig.~\ref{Fig:1Dconv} the mean and the confidence interval
computed using (\ref{def:muM}) 
for an increasing number $M$ of realizations (we compute up to 1000
independent realizations). We check that this
indicator reaches a plateau for $M \geq 30$, and thus converges
fast. On this example, considering 30 realizations is hence sufficient to
accurately compute the error~\eqref{eq:err-estim}. Based on this
observation, we will only consider $M=30$ realizations in
the two dimensional examples of Section~\ref{sec:2d-test}. 

\begin{figure}[htbp]
\centering
\includegraphics[angle=-90,scale=0.8]{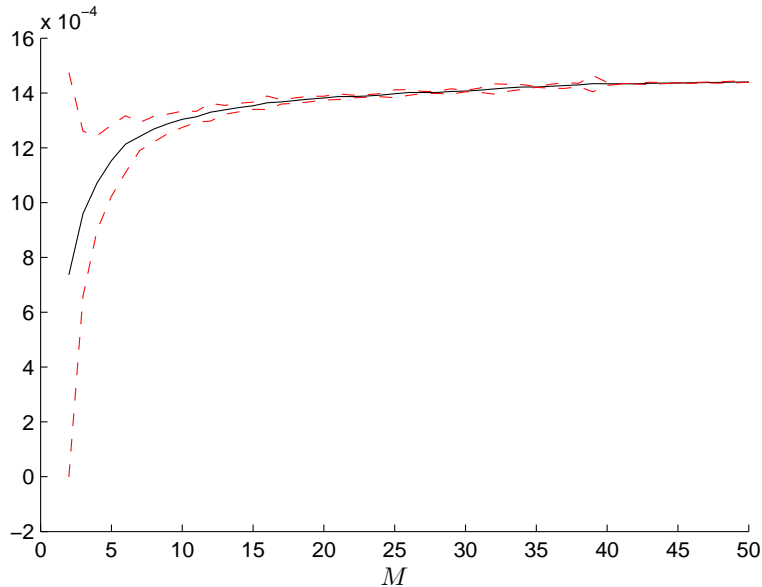}
\caption{Convergence of the indicator $e_{H^1}(u_M,u_\eta^\varepsilon)$
  (see~\eqref{eq:err-estim}), for $\eta=1$, $\zeta=1$ and
  $\kappa=55$. For each value of $M$, we plot the empirical mean along
  with its confidence interval, computed from the first $M$
  realizations. We only plot the results for the first $50$ realizations.
} 
\label{Fig:1Dconv}
\end{figure}

\begin{remark}
There is no reason to think that the calibration of our parameters that
we perform in the one-dimensional situation provides an adequate
adaptation of these parameters for the higher dimensional setting. We
however see no other manner to proceed and the approach has indeed
provided us with good results.  

Note also that the MsFEM approach is much more accurate in the
one-dimensional setting than in the two-dimensional setting (compare
Tables~\ref{Tab:1Dh1a},~\ref{Tab:1Dh1b} and~\ref{Tab:1Dh1c} with
Tables~\ref{Tab:h1a} and~\ref{Tab:h1b} below). This is due to the
specificity of the one dimensional setting. 
However, one-dimensional examples remain relevant for e.g.
assessing how the MsFEM accuracy depends on $\eta$. 
\end{remark}

\medskip

We now check how the accuracy of our
approach depends on $\eta$. 
In Tables~\ref{Tab:1Dh1a}, \ref{Tab:1Dh1b}, \ref{Tab:1Dh1c},
\ref{Tab:1DL2a}, \ref{Tab:1DL2b} and \ref{Tab:1DL2c}, we report the estimators
(\ref{eq:err-estim}), along with their confidence intervals, for various
choices of $(\kappa,\zeta)$ that all correspond to
$\mathcal{K}(\kappa,\zeta)=1$.  
For $\eta \leq 0.1$, we observe that
$\|u_S-u_\eta^\varepsilon\|$ and $\|u_M-u_\eta^\varepsilon\|$ are of the
same order of magnitude, and are both larger than $\|u_M-u_S\|$ (both in
$L^2$ and broken $H^1$ norms). We thus obtain the same 
accuracy with the direct and the weak stochastic MsFEM approaches,
whereas the weak stochastic MsFEM 
is computationally (much) less expensive. For $\eta=1$, as expected, the
accuracy of the approximation $u_S$
deteriorates. The accuracy of $u_M$ is independent of $\eta$. 

\begin{remark}
In Section~\ref{sec:analysis}, we estimate in the $H^1$ (broken) norm
the error between the reference solution $u_\eta^\eps$ and the weak
MsFEM solution $u_S$. For information, we also include in
Tables~\ref{Tab:1Dh1a}--\ref{Tab:1DL2c} the numerical comparison in the
$L^2$ norm. 
\end{remark}

\begin{table}[htbp]
\center
\caption{$H^1(0,1)$ error~\eqref{eq:err-estim} (in \%) for $\kappa=55$ and
  $\zeta=1$} 
{\begin{tabular}{c | c|  c|  c}
\hline
$\eta$ & $e_{H^1}(u_M,u_\eta^\varepsilon)$ & $e_{H^1}(u_S,u_\eta^\varepsilon)$ & $e_{H^1}(u_S,u_M)$ \\
\hline
$1$    &  $0.14644 \pm 0.00036$ & $2.62550 \pm 0.02696$ & $2.44359 \pm 
0.02696$ \\
$0.1$  &  $0.16001 \pm 0.00006$ & $0.15021 \pm 0.00051$ & $0.07036 \pm 
0.00044$ \\
$0.01$ &  $0.16258 \pm 0.00000$ & $0.10837 \pm 0.00002$ & $0.04825 \pm 
0.00025$ \\
\hline
\end{tabular} 
}\label{Tab:1Dh1a}
\center
\caption{$H^1(0,1)$ error~\eqref{eq:err-estim} (in \%) for $\kappa=14.38$ and $\zeta=3$}
{\begin{tabular}{c | c|  c|  c}
\hline
$\eta$ & $e_{H^1}(u_M,u_\eta^\varepsilon)$ & $e_{H^1}(u_S,u_\eta^\varepsilon)$ & $e_{H^1}(u_S,u_M)$ \\
\hline
$1$ &    $0.18269 \pm 0.00030$ & $2.38950 \pm 0.02277$ & $2.23869 \pm 0.02230$ 
\\
$0.1$ &  $0.16529 \pm 0.00003$ & $0.14959 \pm 0.00055$ & $0.08082 \pm 0.00041$ 
\\
$0.01$ & $0.16314 \pm 0.00000$ & $0.10840 \pm 0.00000$ & $0.04954 \pm 0.00001$ 
\\
\hline
\end{tabular} 
}\label{Tab:1Dh1b}
\center
\caption{$H^1(0,1)$ error~\eqref{eq:err-estim} (in \%) for $\kappa=8.39$ and $\zeta=7$}
{\begin{tabular}{c | c|  c|  c}
\hline
$\eta$ & $e_{H^1}(u_M,u_\eta^\varepsilon)$ & $e_{H^1}(u_S,u_\eta^\varepsilon)$ & $e_{H^1}(u_S,u_M)$ \\
\hline
$1$ &    $0.17436 \pm 0.00026$ & $2,34495 \pm 0.02105$ & $2,27358 \pm 0.02089$ 
\\
$0.1$ &  $0.16465 \pm 0.00004$ & $0.15748 \pm 0.00067$ & $0.09803 \pm 0.00053$ 
\\
$0.01$ & $0.16308 \pm 0.00000$ & $0.10846 \pm 0.00000$ & $0.05054 \pm 0.00001$ 
\\
\hline
\end{tabular} 
}\label{Tab:1Dh1c}
\end{table}

\begin{table}[htbp]
\center
\caption{$L^2(0,1)$ error~\eqref{eq:err-estim} (in \%) for $\kappa=55$ and $\zeta=1$}
{\begin{tabular}{c|  c|  c|  c}
\hline
$\eta$ & $e_{L^2}(u_M,u_\eta^\varepsilon)$ & $e_{L^2}(u_S,u_\eta^\varepsilon)$ & $e_{L^2}(u_S,u_M)$ \\
\hline
$1$ &     $0.00018 \pm 0.00000$ & $0.07286 \pm 0.00317$ & $0.06861 \pm 
0.00306$ \\
$0.1$ &   $0.00018 \pm 0.00000$ & $0.00045 \pm 0.00002$ & $0.00024 \pm 
0.00001$ \\
$0.01$ &  $0.00018 \pm 0.00000$ & $0.00015 \pm 0.00000$ & $0.00002 \pm 
0.00000$ \\
\hline
\end{tabular} 
}\label{Tab:1DL2a}
\center
\caption{$L^2(0,1)$ error~\eqref{eq:err-estim} (in \%) for $\kappa=14.38$ and $\zeta=3$}
{\begin{tabular}{c|  c|  c|  c}
\hline
$\eta$ & $e_{L^2}(u_M,u_\eta^\varepsilon)$ & $e_{L^2}(u_S,u_\eta^\varepsilon)$ & $e_{L^2}(u_S,u_M)$ \\
\hline
$1$ &    $0.00019 \pm 0.00000$ & $0.06658 \pm 0.00270$ & $0.06238 \pm 0.00261$ 
\\
$0.1$ &  $0.00018 \pm 0.00000$ & $0.00036 \pm 0.00001$ & $0.00019 \pm 0.00001$ 
\\
$0.01$ & $0.00018 \pm 0.00000$ & $0.00015 \pm 0.00000$ & $0.00002 \pm 0.00000$ 
\\
\hline
\end{tabular} 
}\label{Tab:1DL2b}
\center
\caption{$L^2(0,1)$ error~\eqref{eq:err-estim} (in \%) for $\kappa=8.39$ and $\zeta=7$}
{\begin{tabular}{c|  c|  c|  c}
\hline
$\eta$ & $e_{L^2}(u_M,u_\eta^\varepsilon)$ & $e_{L^2}(u_S,u_\eta^\varepsilon)$ & $e_{L^2}(u_S,u_M)$ \\
\hline
$1$ &    $0.00018 \pm 0.00000$ & $0.08903 \pm 0.00310$ & $0.08410 \pm 0.00261$ 
\\
$0.1$ &  $0.00018 \pm 0.00000$ & $0.00037 \pm 0.00002$ & $0.00016 \pm 0.00000$ 
\\
$0.01$ & $0.00018 \pm 0.00000$ & $0.00015 \pm 0.00000$ & $0.00003 \pm 0.00000$ 
\\
\hline
\end{tabular} 
}\label{Tab:1DL2c}
\end{table}

\medskip

We now turn to a different question. 
In the example considered here, some frequencies present in
$a^\varepsilon_1$ do not appear in $a_0^\varepsilon$, and are thus not
captured in the highly oscillatory basis functions
$\phi_i^\varepsilon$. We now show that our approach can still handle this
case, provided the amplitude of these modes remains small. 

We first consider the case when
the amplitude $\kappa$ associated to the frequency $\zeta$ is kept 
constant, and compare 
the performance of our approach in the case $\zeta = 1$ and $\zeta = 3$.
In the latter case, a relevant high frequency is not taken into
account in the basis set functions. Comparing Tables~\ref{Tab:1Dh1a}
and~\ref{Tab:1DL2a} (corresponding to $\zeta = 1$) with
Tables~\ref{Tab:1Dh1ab} and~\ref{Tab:1DL2ab} (corresponding to $\zeta =
3$) for a given value of $\eta$, we see that the accuracy of our approach
deteriorates. This is not unexpected, of course. Otherwise stated, to achieve a given accuracy (say an
error of 15 \% in the broken $H^1$ norm), we need to take smaller values of
$\eta$ (namely $\eta \leq 0.01$) when $\zeta=3$ than when $\zeta=1$ (in
which case $\eta = 0.1$ is already a sufficiently small value). 

We now run the comparison differently. As we increase the gap
between the frequency present in $a^\varepsilon_1$ and that present
in $a^\varepsilon_0$ (i.e., as we increase $\zeta$), we simultaneously
decrease the amplitude $\kappa$ of that mode. In practice, we
do this by keeping constant the parameter $\mathcal{K}(\kappa,\zeta)$
defined by~(\ref{def:control-1d}). Then the accuracy of our approach
remains constant, and is independent of $\zeta$. See indeed the
numerical results of Tables~\ref{Tab:1Dh1a}-\ref{Tab:1DL2c}, that all
correspond to the choice $\mathcal{K}(\kappa,\zeta)=1$, for three
different values of $\zeta$. We observe that, at fixed $\eta$,
errors are comparable, and independent of the value of $(\kappa,\zeta)$. 

In conclusion, the accuracy of our approach depends {\em both} on the
amplitude $\kappa$ and the value $\zeta$ of the high frequency not taken
into account in the MsFEM basis set functions. If $\zeta$ and $\kappa$
are scaled 
so that $\mathcal{K}(\kappa,\zeta)$ remains constant (which implies that 
$\kappa$ decreases if $\zeta$ increases), then the accuracy
of our approach remains constant.

\begin{table}[htbp]
\center
\caption{$H^1(0,1)$ error~\eqref{eq:err-estim} (in \%) for $\kappa=55$ and $\zeta=3$}
{\begin{tabular}{c | c|  c|  c}
\hline
$\eta$ & $e_{H^1}(u_M,u_\eta^\varepsilon)$ & $e_{H^1}(u_S,u_\eta^\varepsilon)$ & $e_{H^1}(u_S,u_M)$ \\
\hline
$1$    &  $0.21826 \pm 0.00073$ & $12.30047 \pm 0.10647$ & $12.01694 \pm 
0.10617$ \\
$0.1$  &  $0.17142 \pm 0.00013$ & $0.59293 \pm 0.00519$ & $0.49523 \pm 
0.00489$ \\
$0.01$ &  $0.16383 \pm 0.00001$ & $0.11448 \pm 0.00014$ & $0.05247 \pm 
0.00007$ \\
\hline
\end{tabular} 
}\label{Tab:1Dh1ab}
 \center
\caption{$L^2(0,1)$ error~\eqref{eq:err-estim} (in \%) for $\kappa=55$ and $\zeta=3$}
{\begin{tabular}{c|  c|  c|  c}
\hline
$\eta$ & $e_{L^2}(u_M,u_\eta^\varepsilon)$ & $e_{L^2}(u_S,u_\eta^\varepsilon)$ & $e_{L^2}(u_S,u_M)$ \\
\hline
$1$ &     $0.00022 \pm 0.00000$ & $1.53780 \pm 0.03878$ & $1.51837 \pm 
0.00385$ \\
$0.1$ &   $0.00019 \pm 0.00000$ & $0.00503 \pm 0.00027$ & $0.00406 \pm 
0.00024$ \\
$0.01$ &  $0.00018 \pm 0.00000$ & $0.00018 \pm 0.00000$ & $0.00005 \pm 
0.00000$ \\
\hline
\end{tabular} 
}\label{Tab:1DL2ab}
\end{table}

\subsection{Two-dimensional test-cases}
\label{sec:2d-test}

We now test our approach on two-dimensional test cases. Using the first
test case, we show, similarly to the one-dimensional situation, that the weak
stochastic MsFEM yields accurate results, provided the parameter
$\eta$ is sufficiently small, and provided that the amplitude associated to
frequencies present in $A^\varepsilon_\eta$ but not encoded in the
deterministic basis functions is small (see
Section~\ref{sec:test-2d-1}). Next, in Section~\ref{sec:test-2d-2}, we
consider a test case similar to a classical benchmark test case of the literature. We again observe that our
approach is efficient.
For both cases, we show that the parameter $\eta$ does not need to be
extremely small for our approach to be highly competitive. 

\subsubsection{A multi-frequency case} 
\label{sec:test-2d-1}

In line with what we observed in the one-dimensional case, we show here that
the weak stochastic MsFEM provides interesting results even in the case
when not all the frequencies present in $A^\varepsilon_\eta$ are captured in the deterministic
basis functions, provided their amplitude is not too large. To this aim,
we consider the following numerical 
example. 

Let $\left( X_{k,l}\right)_{(k,l) \in \ZZ^2}$ denote a
sequence of independent, identically distributed scalar random variables
uniformly distributed in the interval $[0,1]$. We consider the 
random matrix 
$$
A_\eta^\varepsilon(x,y,\omega)
=
a_0^\varepsilon(x,y) \ \text{Id}_2 + 
\eta a_1^\varepsilon(x,y,\omega) \ \text{Id}_2,
$$
with
\begin{eqnarray*}
a_0^\varepsilon(x,y) &=& 5 + 50 \sin^2 \left(\frac{\pi x}{\eps}\right)
\sin^2 \left(\frac{\pi y}{\eps} \right),
\\
a_1^\varepsilon(x,y,\omega) &=& \sum\limits_{(k,l) \in \ZZ^2} 
\mathbf{1}_{(k,k+1]} \left(\frac{x}{\varepsilon}\right) 
\mathbf{1}_{(l,l+1]} \left(\frac{y}{\varepsilon}\right) 
\left( X_{k,l}(\omega) \: \kappa \: 
\sin^2 \left( \frac{\zeta \pi x}{\eps} \right) 
\sin^2 \left( \frac{\zeta \pi y}{\eps} \right) \right).
\end{eqnarray*}
Again, this choice is a particular example of the expansion
(\ref{def:A-eta}) satisfying the structural
assumption~\eqref{structA1}. We consider two different values of
$\zeta$, namely $\zeta=1$ and $\zeta=3$. As in the previous test case,
the frequency $\zeta$ is 
not present in the deterministic part of $A^\eps_\eta$, and thus not
encoded in the basis functions. 
In line with what we observed in Section~\ref{sec:test-1d}, we choose
the amplitude $\kappa$ associated to that frequency such that the
quantity~\eqref{def:control-1d} 
has the same value $\mathcal{K}=1$ for both values of $\zeta$. We
compute $u_\eta^\varepsilon$ solution to  
$$
-\mbox{div}\left[A_\eta^\varepsilon(\cdot,\omega) \, \nabla 
u_\eta^\varepsilon(\cdot,\omega)\right] = 1 \ \ \mbox{in $\mathcal{D}$}, 
\quad 
u_\eta^\varepsilon(\cdot,\omega) = 0 \ \ \mbox{ on $\partial \mathcal{D}$}, 
$$
on the domain $\mathcal{D}= (0,1)^2$ with $\varepsilon=0.025$. Let $u_M$
and $u_S$ be its approximation by the two MsFEM approaches described
above. The numerical parameters for the computation are again determined
using an empirical study of convergence. We use for the reference
solution $u^\varepsilon_\eta$ a fine mesh of size $h_f=\varepsilon/40$. The
MsFEM basis functions are computed in each element $\cell$ using a mesh
of size $h_M=\varepsilon/80$. The oversampling parameter (i.e. the scale
ratio of the homothetic transformation between $\cell$ and $\mathbf{S}$,
see Fig.~\ref{Fig:OverS}) is equal to $3$. The coarse mesh size is
$h=1/30$. In view of the results of Section~\ref{sec:test-1d}, we
consider $M=30$ independent realizations, which will prove to again be
sufficient to obtain accurate results. 

In Tables~\ref{Tab:h1a} and \ref{Tab:h1b} (Tables~\ref{Tab:L2a}
and~\ref{Tab:L2b} respectively), we report the estimator
(\ref{eq:err-estim}), along with its 
confidence interval, for the broken $H^1(\mathcal{D})$ norm and for the
$L^2(\mathcal{D})$ norm, respectively. The results obtained here confirm our
observations in the one-dimensional setting (Section
\ref{sec:test-1d}):
\begin{itemize}
\item for given $\zeta$ and $\kappa$, we observe that, when $\eta$ is
  sufficiently small (here, $\eta \leq 0.1$), the alternative approach
  provides a solution $u_S$ that is an approximation of
  $u_\eta^\varepsilon$ as accurate as $u_M$, for a much smaller
  computational cost (as the MsFEM basis set has only been computed once
  rather than for each independent realization of $A_\eta^\varepsilon$).
\item our approach yields accurate results even if the frequency
$\zeta$ is not encoded in the basis functions $\phi_i^\varepsilon$,
provided the associated amplitude $\kappa$ is scaled accordingly.
Figures in Table~\ref{Tab:h1a} 
(respectively Table~\ref{Tab:L2a}) are very close to those of
Table~\ref{Tab:h1b} (respectively Table~\ref{Tab:L2b}). This confirms
that the error made by the weak stochastic MsFEM seems to be independent
of $\kappa$ and $\zeta$, provided these two parameters are scaled so
that $\mathcal{K}(\kappa,\zeta)$ remains constant. If $\zeta$ becomes different than 1, the frequency present in
$a_0^\varepsilon$, then the amplitude $\kappa$ associated to the
frequency $\zeta$ has to decrease to keep $\mathcal{K}(\kappa,\zeta)$
(and thus the accuracy of $u_S$) constant. 
\end{itemize}
These observations again demonstrate the efficiency of the
approach.

\begin{table}[htbp]
\center
\caption{$H^1(\mathcal{D})$ error~\eqref{eq:err-estim} (in \%) for $\kappa=73.61$ and $\zeta=1$}
{\begin{tabular}{c | c|  c|  c}
\hline
$\eta$ & $e_{H^1}(u_M,u_\eta^\varepsilon)$ & $e_{H^1}(u_S,u_\eta^\varepsilon)$ & $e_{H^1}(u_S,u_M)$ \\
\hline
$1$    &  $7.8437 \pm 0.1350$ & $19.8818 \pm 0.4123$ & $18.8662 \pm 0.4216$ \\
$0.1$  &  $6.8053 \pm 0.0165$ & $7.3868  \pm 0.0276$ & $3.1528  \pm 0.0517$ \\
$0.01$ &  $6.7338 \pm 0.0017$ & $6.9795  \pm 0.0016$ & $1.8763  \pm 0.0013$ \\
\hline
\end{tabular} 
}\label{Tab:h1a}
\center
\caption{$H^1(\mathcal{D})$ error~\eqref{eq:err-estim} (in \%) for $\kappa=10$ and $\zeta=3$}
{\begin{tabular}{c | c|  c|  c}
\hline
$\eta$ & $e_{H^1}(u_M,u_\eta^\varepsilon)$ & $e_{H^1}(u_S,u_\eta^\varepsilon)$ & $e_{H^1}(u_S,u_M)$ \\
\hline
$1$ &    $6.7224 \pm 0.0368$ & $12.7292 \pm 0.2172$ & $10.8128 \pm 0.2442$ \\
$0.1$ &  $6.7154 \pm 0.0044$ & $ 7.1069 \pm 0.0128$ & $ 2.2925 \pm 0.0206$ \\
$0.01$ & $6.1725 \pm 0.0004$ & $ 6.9770 \pm 0.0010$ & $ 1.8504 \pm 0.0003$ \\
\hline
\end{tabular} 
}\label{Tab:h1b}
\center
\caption{$L^2(\mathcal{D})$ error~\eqref{eq:err-estim} (in \%) for $\kappa=73.61$ and $\zeta=1$}
{\begin{tabular}{c|  c|  c|  c}
\hline
$\eta$ & $e_{L^2}(u_M,u_\eta^\varepsilon)$ & $e_{L^2}(u_S,u_\eta^\varepsilon)$ & $e_{L^2}(u_S,u_M)$ \\
\hline
$1$ &     $1.4355 \pm 0.0795$ & $4.1649 \pm 0.1652$ & $2.8468 \pm 0.1694$ \\
$0.1$ &   $1.0630 \pm 0.0108$ & $1.1369 \pm 0.0075$ & $0.1441 \pm 0.0354$ \\
$0.01$ &  $1.0211 \pm 0.0011$ & $1.1512 \pm 0.0007$ & $0.1351 \pm 0.0014$ \\
\hline
\end{tabular} 
}\label{Tab:L2a}
\center
\caption{$L^2(\mathcal{D})$ error~\eqref{eq:err-estim} (in \%) for $\kappa=10$ and $\zeta=3$}
{\begin{tabular}{c|  c|  c|  c}
\hline
$\eta$ & $e_{L^2}(u_M,u_\eta^\varepsilon)$ & $e_{L^2}(u_S,u_\eta^\varepsilon)$ & $e_{L^2}(u_S,u_M)$ \\
\hline
$1$ &    $1.0744 \pm 0.0127$ & $1.8433 \pm 0.0582$ & $0.8426 \pm 0.0832$ \\
$0.1$ &  $1.0226 \pm 0.0015$ & $1.1249 \pm 0.0038$ & $0.1147 \pm 0.0073$ \\
$0.01$ & $1.0170 \pm 0.0001$ & $1.1551 \pm 0.0004$ & $0.1427 \pm 0.0003$ \\
\hline
\end{tabular} 
}\label{Tab:L2b}
\end{table}

\begin{remark}
In Tables~\ref{Tab:h1a}-\ref{Tab:L2b}, we observe that the size of the
confidence interval is much smaller than the distance between two
different errors. This a posteriori validates the choice of the number
$M$ of Monte Carlo realizations according to the calibration we
performed in the one-dimensional setting. In the two-dimensional setting
studied here, we observe that considering
$M=30$ realizations is again sufficient. The same conclusion holds for
results presented in Tables~\ref{Tab:h1}-\ref{Tab:L2eta} below. 
\end{remark}

\subsubsection{A classical test case}
\label{sec:test-2d-2}

We consider in this section a test case similar to a classical test case
of the literature (see e.g.~\cite{hou1997,hou2004,Chen2008,Efendiev2000}). Let 
$\left( X_{k,l}\right)_{(k,l) \in \ZZ^2}$ denote a sequence of
independent, identically distributed scalar random variables uniformly
distributed in the interval $[0,1]$. We consider the random matrix 
$$
A_\eta^\varepsilon(x,y,\omega)=\sum\limits_{(k,l) \in
  \ZZ^2} \mathbf{1}_{(k,k+1]} \left(\frac{x}{\varepsilon}\right)
\mathbf{1}_{(l,l+1]} \left(\frac{y}{\varepsilon}\right) \left(
  \frac{2+P \sin(2 \pi x / \varepsilon)}{2+P\sin(2 \pi y / \varepsilon)}
  + \frac{2+\sin(2 \pi y / \varepsilon)}{2+P\sin(2 \pi x /
    \varepsilon)}\right) \left(1+\eta X_{k,l}(\omega)\right) \ \text{Id}_2, 
$$
with $P=1.8$ and $\varepsilon=0.025$. We compute the reference
solution $u_\eta^\varepsilon$ and its two approximations $u_M$ and $u_S$
with the same numerical parameters as in Section~\ref{sec:test-2d-1}.  

In Tables~\ref{Tab:h1} and~\ref{Tab:L2}, we report the estimator
(\ref{eq:err-estim}), along with its confidence interval, for the broken
$H^1(\mathcal{D})$ norm and for the $L^2(\mathcal{D})$ norm,
respectively. We again see 
that, when $\eta$ is sufficiently small, $u_S$ is an
approximation of the reference solution $u^\eps_\eta$ as accurate as
$u_M$. In Tables~\ref{Tab:h1eta} and~\ref{Tab:L2eta}, we report on the
accuracy of $u_S$, for more values of $\eta$. Assuming that the accuracy
of $u_M$ does not depend on $\eta$ (which is consistent with the results
reported in Tables~\ref{Tab:h1} and~\ref{Tab:L2}), we see that our
approach is as accurate as the direct, expensive MsFEM approach, as soon
as $\eta \leq 0.1$ (if we use the broken $H^1$ norm to assess accuracy) and
$\eta \leq 0.25$ (if we rather use the $L^2$ norm). The parameter $\eta$
hence does not need to be extremely small for our approach to be
highly competitive. 

\begin{table}[htbp]
\center
\caption{$H^1(\mathcal{D})$ error~\eqref{eq:err-estim} (in \%)}
{\begin{tabular}{c | c|  c|  c}
\hline
$\eta$ & $e_{H^1}(u_M,u_\eta^\varepsilon)$ & $e_{H^1}(u_S,u_\eta^\varepsilon)$ & $e_{H^1}(u_S,u_M)$ \\
\hline
$1$ & $8.1154 \pm 0.1913$ & $17.3678 \pm 0.7784$ & $15.5113 \pm 0.8689$ \\
$0.1$ & $7.1664 \pm 0.0199$ & $7.0524 \pm 0.0705$ & $2.5638 \pm 0.1006$ \\
$0.01$ & $7.1453 \pm 0.0020$ & $7.2837 \pm 0.0067$ & $1.3882 \pm 0.0020$ \\
\hline
\end{tabular} 
}\label{Tab:h1}
\center
\caption{$L^2(\mathcal{D})$ error~\eqref{eq:err-estim} (in $\%$)}
{\begin{tabular}{c|  c|  c|  c}
\hline
$\eta$ & $e_{L^2}(u_M,u_\eta^\varepsilon)$ & $e_{L^2}(u_S,u_\eta^\varepsilon)$ & $e_{L^2}(u_S,u_M)$ \\
\hline
$1$ & $0.5620    \pm 0.0803$ & $1.6855 \pm 0.4860$ & $1.4739 \pm 0.5048$ \\
$0.1$ & $0.5354  \pm 0.0160$ & $0.5688 \pm 0.0630$ & $0.1984 \pm 0.0712$ \\
$0.01$ & $0.5347 \pm 0.0012$ & $0.6192 \pm 0.0054$ & $0.1072 \pm 0.0032$ \\
\hline
\end{tabular} 
}\label{Tab:L2}
\end{table}

\begin{table}
\centering
\begin{minipage}{0.4\linewidth}
\center
\caption{$H^1(\mathcal{D})$ error~\eqref{eq:err-estim} (in $\%$)}
\renewcommand{\arraystretch}{1.5}
{\begin{tabular}{c | c }
\hline
$\eta$  & $e_{H^1}(u_S,u_\eta^\varepsilon)$  \\
\hline
$1$     & $17.3678 \pm 0.7784$    \\
$0.5$   & $15.9578 \pm 0.3461$     \\
$0.25$  & $10.6130 \pm 0.1591$     \\
$0.1$   & $7.0524 \pm 0.0705$     \\
$0.01$  & $7.2837 \pm 0.0067$     \\
\hline
\end{tabular} 
}\label{Tab:h1eta}
\end{minipage}
\begin{minipage}{0.4\linewidth}
\center
\caption{$L^2(\mathcal{D})$ error~\eqref{eq:err-estim} (in $\%$)}
\renewcommand{\arraystretch}{1.5}
{\begin{tabular}{c|  c }
\hline
$\eta$ & $e_{L^2}(u_S,u_\eta^\varepsilon)$  \\
\hline
$1$    & $1.6855 \pm 0.4860$     \\
$0.5$  & $1.0246 \pm 0.4414$     \\
$0.25$ & $0.5291 \pm 0.2285$     \\
$0.1$  & $0.5688 \pm 0.0630$     \\
$0.01$ & $0.6192 \pm 0.0054$     \\
\hline
\end{tabular} 
}\label{Tab:L2eta}  
\end{minipage}
\end{table}

\section{Analysis}
\label{sec:analysis}

This section is devoted to the analysis of the approach introduced in
Section~\ref{sec:weak-stoch}, and to the derivation of error bounds. As
is often the case for the 
MsFEM (see e.g.~\cite{Efendiev2000}), we perform the analysis in a
setting where the problem~\eqref{0-sto}-\eqref{eq:decompo} that we
consider admits a homogenized limit as $\eps$ vanishes (although, we
repeat it, the approach is used in practice for more general cases). 
The structure of
our proof is similar to that for the deterministic setting, which we
now overview (we refer to~\cite{Efendiev2000} for all the details). 

\medskip

In the case when the oversampling technique is not used, the MsFEM is
a conforming Galerkin approximation, and the error is estimated using
the C\'ea lemma: 
$$
\|u^\eps - u_M \|_{H^1}
\leq C \inf\limits_{v_h \in \mathcal{W}_h} 
\|u^\eps - v_h\|_{H^1}, 
$$
where $u^\eps$ is the solution to the reference deterministic highly oscillatory
problem~\eqref{0}, $u_M$ is the MsFEM solution, and the constant $C$
is independent from $\eps$ and $h$. In the case when the oversampling
technique is used, the MsFEM is a non-conforming Galerkin method. The
error is then bounded from above by the sum of the best approximation
error (the right-hand side of the above estimate) and the non-conforming
error (that we do not detail here): 
$$
\|u^\eps - u_M \|_{H^1}
\leq C \left[ \inf\limits_{v_h \in \mathcal{W}_h} 
\|u^\eps - v_h\|_{H^1} 
+
\text{non-conforming error} \right].
$$
Note that, in the non-conforming case, the MsFEM solution $u_M$ does not
belong to $H^1$, and one should write the above estimate with a broken
$H^1$ norm rather than the $H^1$ norm. For the sake of clarity, we
ignore this distinction in this preliminary discussion. 

Taking advantage of the
homogenization setting, we introduce the two-scale expansion
$$ 
v^\eps = u^\star + \varepsilon \sum_{i=1}^d 
\wper_{e_i} \left( \frac{\cdot}{\varepsilon} \right) \partial_i u^\star
$$
of
$u^\eps$, where $u^\star$ is the homogenized solution, $\wper_{e_i}$ is the
periodic corrector associated to $e_i \in \RR^d$, and $\partial_i
u^\star$ denotes the partial derivative $\dps \frac{\partial
  u^\star}{\partial x_i}$. We next write
$$
\|u^\eps - u_M \|_{H^1}
\leq C \left[ \|u^\eps - v^\eps \|_{H^1}
+
\inf\limits_{v_h \in \mathcal{W}_h} \|v^\eps - v_h\|_{H^1} 
+
\text{non-conforming error} 
\right].
$$
The first term in the right-hand side is estimated using standard
homogenization results. To estimate the second term, one considers a
suitably chosen element $v_h \in \mathcal{W}_h$, for which 
$\dps \|v^\eps - v_h\|_{H^1}$ can be estimated directly. 
The main idea is that the 
highly oscillating part of $v^\eps$ can be well approached by an element
in $\mathcal{W}_h$, since, by construction, the highly oscillatory basis
functions are defined by a problem similar to the corrector problem, and
thus encode the same highly oscillatory behavior as that present in the
correctors $\wper_{e_i}$. We are thus left with approximating the slowly
varying components of $v^\eps$, for which standard FEM estimates are
used. Lastly, we again use the fact that our problem admits a
homogenized limit to estimate the third term, i.e. the non-conforming error.

\medskip

In the sequel, we follow the same strategy in our stochastic setting. We
hence first write (see~\eqref{Strang2} below) that
\begin{equation}
\label{eq:lundi}
\|u_\eta^\eps(\cdot,\omega) - u_S(\cdot,\omega) \|_{H^1}
\leq C \left[
\inf\limits_{v_h \in \mathcal{W}_h} \|u_\eta^\eps(\cdot,\omega) -
v_h(\cdot,\omega) \|_{H^1}
+
\text{non-conforming error} 
\right],
\end{equation}
where $u_\eta^\eps$ is the solution to the reference stochastic
problem~\eqref{0-sto}-\eqref{eq:decompo} and $C$ is a deterministic
constant independent from $\eps$, $h$ and $\eta$ 
(note that, in~\eqref{Strang2}, we use a {\em broken} $H^1$ norm rather
than the $H^1$ norm; as pointed out above, this is due to the
fact that our approach is a {\em non-conforming} Galerkin
approximation; we ignore this distinction in the current discussion). 
To estimate the best approximation error (the first term
in the right-hand side of~\eqref{eq:lundi} above), we use the triangle
inequality, and write (see~\eqref{eq:triang2_pre} below) that
\begin{equation}
\label{tutu}
\inf_{v_h \in \mathcal{W}_h} 
\|u_\eta^\eps(\cdot,\omega) - v_h(\cdot,\omega) \|_{H^1}
\leq 
\|u_\eta^\eps(\cdot,\omega) - v_\eta^\eps(\cdot,\omega) \|_{H^1}
+
\inf\limits_{v_h \in \mathcal{W}_h} \|v_\eta^\eps(\cdot,\omega) -
v_h(\cdot,\omega) \|_{H^1},
\end{equation}
where $v_\eta^\eps$ is the two-scale expansion of the solution
$u_\eta^\eps$ truncated at order $\varepsilon^2$. A first
difficulty owes to the fact that, in the general stochastic setting, no
estimate is known on $\|u_\eta^\eps(\cdot,\omega) -
v_\eta^\eps(\cdot,\omega) \|_{H^1}$. One 
only knows that its expectation vanishes when $\eps \to 0$. However, in
the present article, we consider a {\em weakly} stochastic 
case. In that setting, we have derived such a convergence rate type
result in~\cite{rate}, and 
we can thus bound the first term of~\eqref{tutu} 
(see Section~\ref{sec:2scale} below for more details). The second term, 
$\dps \inf\limits_{v_h \in \mathcal{W}_h} 
\|v_\eta^\eps(\cdot,\omega) - v_h\|_{H^1}$, of~\eqref{tutu}, is
estimated using an explicit construction of a suitable $v_h$
(see~\eqref{eq:triang2_pre2}), similarly to the deterministic
setting. We again use there our specific weakly stochastic setting. 
Lastly, the non-conforming error (the second term
in the right-hand side of~\eqref{eq:lundi} above) is estimated following
arguments similar to those of the deterministic case, using that
our problem admits a homogenized limit and is weakly stochastic.

\medskip

This section is organized as follows. The error estimation is presented
in Section~\ref{sec:estim}. 
We first recall in Section~\ref{sec:homo-eq} the formulation of the
homogenized problem, and some results specific to the weakly stochastic
case. Next, in Section~\ref{sec:2scale}, we establish an 
error bound between the reference solution $u_\eta^\varepsilon$
and its two-scale expansion $v^\eps_\eta$ (see
Theorem~\ref{exp2scaleueta}), which allows to bound the first term in
the right-hand side of~\eqref{tutu}. Our main result,
Theorem~\ref{theo:H1}, is given in
Section~\ref{sec:weak_stochastic-MsFEM}, and proved in
Section~\ref{sec:proof_main}. The proof essentially consists in
explicitly 
building a function $v_h \in {\cal W}_h$ such that the second term
of~\eqref{tutu} can be directly estimated. It also makes use of several
technical results (Lemmas~\ref{Lem1},~\ref{Lem2}
and~\ref{coro:lambda} below) to bound the non-conforming error, i.e. the
second term in the right hand side of~\eqref{eq:lundi}. The proof of
these technical results is postponed until
Appendix~\ref{sec:tech-proof}. Last, in Section~\ref{sec:1d}, we
specifically consider the one dimensional case.

\bigskip

Before proceeding further, we recall the setting of stochastic
homogenization we work with. The reader familiar with this theory
may directly proceed to Section~\ref{sec:estim}.
Let $(\Omega, {\mathcal F}, \PP)$ be a probability
space. For a random variable $X\in L^1(\Omega, d\PP)$, we denote by
$\esp(X) = \int_\Omega X(\omega) d\PP(\omega)$ its expectation value. We
assume that the group $(\ZZ^d, +)$ acts on $\Omega$. We denote by
$(\tau_k)_{k\in \ZZ^d}$ this action, and assume that it preserves the
measure $\PP$, i.e. 
$$
  \forall k\in \ZZ^d, \quad \forall A \in {\cal F}, \quad \PP(\tau_k A)
  = \PP(A).
$$
We assume that $\tau$ is {\em ergodic}, that is,
$$
\forall A \in {\mathcal F}, \quad \left(\forall k \in \ZZ^d, \quad \tau_k A = A
    \right) \Rightarrow (\PP(A) = 0 \quad\mbox{or}\quad 1).
$$
We define the following notion of stationarity: any
$F\in L^1_{\rm loc}\left(\RR^d, L^1(\Omega)\right)$ is said to be
{\em stationary} if
\begin{equation}
  \label{eq:stationnarite-disc}
  \forall k\in \ZZ^d, \quad F(x+k, \omega) = F(x,\tau_k\omega)
  \mbox{ almost everywhere, almost surely}.
\end{equation}
Note that we have chosen to present the theory in 
a \emph{discrete} stationary setting, which is more
appropriate for our specific purpose, which is to consider a setting
close to {\em periodic} homogenization.
Random homogenization is more often presented in the \emph{continuous}
stationary setting. This is only a matter of small modifications. We
refer to the bibliography for the latter.  

\medskip

For the sake of analysis, we assume in this section that the matrix
$A^\varepsilon_\eta(x,\omega)$ in~\eqref{0-sto}-\eqref{eq:decompo}
reads $\dps A^\varepsilon_\eta(x,\omega) = A_\eta
\left(\frac{x}{\varepsilon},\omega\right)$, where the random matrix
$A_\eta$ is stationary in the sense
of~\eqref{eq:stationnarite-disc}. 
The problem~\eqref{0-sto} now reads
\begin{equation}
-\mbox{div} \left[A_\eta\left(\frac{\cdot}{\varepsilon},\omega\right)
  \nabla u^\varepsilon_\eta(\cdot,\omega) \right] = f \ \ \mbox{in
  $\mathcal{D}$},  
\quad
u^\varepsilon_\eta(\cdot,\omega) = 0 \ \ \mbox{ on $\partial \mathcal{D}$},
\label{PB:stoch-a}
\end{equation}
where $A_\eta(\cdot,\omega) \in (L^\infty(\RR^d))^{d \times
  d}$ satisfies the standard coercivity and boundedness
conditions: there exists two constants $a_+ \geq a_- > 0$ such that 
\begin{equation}
\label{eq:unif_eta}
\forall \eta, \ \forall \xi \in \RR^d, \quad 
a_- |\xi|^2 \leq A_\eta(x,\omega) \xi \cdot \xi 
\ \ \text{a.e. on $\RR^d$, a.s.}
\quad \text{and} \quad
\| A_\eta(\cdot ,\omega) \|_{L^\infty(\RR^d)} \leq a_+ \ \ \text{a.s.}
\end{equation}
Due to the stationarity assumption on $A_\eta$, the
problem~\eqref{PB:stoch-a} admits a homogenized limit when $\eps \to
0$. Note that, to the best of our knowledge, all analyses of the MsFEM
approach in the deterministic setting that have been proposed in the
literature are performed under a similar assumption (the matrix $A^\eps$
in~\eqref{0} is assumed to read $\dps A^\eps(x) =
A_{per}\left(\frac{x}{\varepsilon}\right)$ for a fixed periodic matrix
$A_{per}$, see e.g.~\cite{hou1999,Efendiev2000}).

In addition, in line with~\eqref{eq:decompo} and~\eqref{def:A-eta}, we
assume that $A_\eta$ is of the form
\begin{equation}
A_\eta(x,\omega)=A_{per}(x)+ \eta \: A_1(x,\omega),
\label{def:A-eta-1}
\end{equation}
where $\eta \in \RR$ is small parameter (we henceforth assume that
$|\eta| \leq 1$),
$A_{per}$ is a symmetric bounded $Q$-periodic matrix 
($Q=[0,1]^d$) 
satisfying the ellipticity condition almost everywhere on $\RR^d$, and
$A_1$ is a symmetric bounded stationary matrix: $|A_1(x,\omega)|\leq C$
almost everywhere in $\RR^d$, almost surely. Since $\eta$ is small, our
problem is {\em weakly} stochastic. 

In line with~\eqref{structA1}, we furthermore assume that $A_1$ is of
the form 
\begin{equation}
\label{struc:A1}
A_1(x,\omega) = \sum\limits_{k \in \ZZ^d} \mathbf{1}_{Q+k}(x)
X_k(\omega) \, B_{per}(x),
\end{equation}
where $\left(X_k(\omega)\right)_{k\in \ZZ^d}$ is a sequence of
i.i.d. scalar random variables such that 
$$
\exists C, \, \forall k \in \ZZ^d, \quad 
|X_k(\omega)| \leq C \quad \text{ almost surely,}
$$
and $B_{per} \in \left( L^\infty(\RR^d) \right)^{d \times d}$ is a
$Q$-periodic matrix. Besides being used in Theorem~\ref{exp2scaleueta}
below, this assumption is also used in the proof of
Lemma~\ref{coro:lambda}, to recognize that some quantity
(namely,~\eqref{eq:lundi2} below) is a normalized
sum of {\em i.i.d. variables}, on which we can use Central Limit Theorem
arguments. 
As mentioned in Section~\ref{sec:implem} above, the
form~\eqref{struc:A1} is not essential. The point
in~\eqref{struc:A1} is that  
$A_1$ is a sum of {\em direct products} of a function depending on $x$
with a random variable only depending on $\omega$.
Assumptions alternative to~\eqref{struc:A1} could be made, that still
satisfy this framework. 

Finally, we assume that
\begin{eqnarray}
\label{hyp:a_holder}
\text{$A_{per}$ is H\"older continuous},
\\
\label{hyp:b_holder}
\text{$B_{per}$ is H\"older continuous}.
\end{eqnarray}
We use these assumptions to obtain a rate of convergence of the
two-scale expansion of $u^\eps_\eta$ (see~\cite{rate} and
Theorem~\ref{exp2scaleueta} below), and hence
control the first term in the right-hand side of~\eqref{tutu}. 
Such assumptions are standard when proving convergence rates of
two-scale expansions (see e.g.~\cite[p.~28]{Jikov1994}). 
In turn, to control the second term 
in~\eqref{tutu} and the non-conforming error (the second term
in~\eqref{eq:lundi}), we do not need $B_{per}$ to be H\"older
continuous, and only use the fact that $A_{per}$ is H\"older
continuous (to obtain e.g. Lemmas~\ref{lem:chi},~\ref{Lem1},~\ref{Lem2}
and~\ref{LemChen}).  
The numerical examples that we have considered in Section~\ref{sec:num}
satisfy assumptions~\eqref{hyp:a_holder}-\eqref{hyp:b_holder} (remark that
assumption~\eqref{hyp:a_holder} is also satisfied in the numerical
examples considered in e.g.~\cite{Efendiev2009}). 

Note that we have assumed $A_{per}$ and $B_{per}$ to be symmetric only
for the sake of simplicity. The arguments used below carry over to the
non-symmetric case up to slight modifications. 

\subsection{Error estimation}
\label{sec:estim}

To bound the error between the reference solution $u^\eps_\eta$ and the
MsFEM solution $u_S$, we use in many instances that we work in a {\em
  weakly} stochastic homogenization setting.
We first recall in Section~\ref{sec:homo-eq} some results specific to
weakly stochastic homogenization. This setting also allows to state
rates of convergence for the two-scale expansion of $u^\eps_\eta$, as we
explain in Section~\ref{sec:2scale}. Our main result,
Theorem~\ref{theo:H1}, is given in
Section~\ref{sec:weak_stochastic-MsFEM}. 

\subsubsection{The homogenized equation}
\label{sec:homo-eq}

Under the conditions recalled above, it is known (see
e.g.~\cite{blp,Jikov1994}) that the solution 
$u^\varepsilon_\eta(\cdot,\omega)$
to~\eqref{PB:stoch-a} a.s. converges weakly in $H^1_0(\mathcal{D})$ as
$\varepsilon \rightarrow 0$ to the deterministic solution $u^\star_\eta$ of
the homogenized equation
\begin{equation}
-\mbox{div}\left[A_\eta^\star\nabla u^\star_\eta \right] =
f \ \ \mbox{in $\mathcal{D}$}, 
\quad
u^\star_\eta = 0 \ \ \mbox{ on $\partial \mathcal{D}$}.
\label{PB:homo}
\end{equation}
The homogenized matrix is given by
\begin{equation}
\left(A_\eta^\star\right)_{ij}=\esp\left(\int_{Q}  (e_i + \nabla
  w^\eta_{e_i}(y,\cdot))^T A_\eta(y,\cdot)(e_j + \nabla w^\eta_{e_j}(y,\cdot))
  \,dy\right), 
\label{Astareta}
\end{equation}
where, for any $p \in \RR^d$, $w^\eta_p$ is the unique (up to the
addition of a random constant) solution to the corrector problem 
\begin{equation}
\left\{
\begin{array}{ll}
\dps-\mbox{div} \left[A_\eta\left(\cdot,\omega\right)(p + \nabla
  w^\eta_p(\cdot,\omega))
\right] = 0 & \mbox{ in } \RR^d, 
\\ \noalign{\vskip 3pt}
\nabla w^\eta_p 
\mbox{ is stationary in the sense of~\eqref{eq:stationnarite-disc},} 
\\ \noalign{\vskip 3pt}
\dps \esp \left( \int_{Q}\nabla w^\eta_p(y,\cdot) \, dy \right)= 0.
\end{array}
\right.
\label{PB:correcteur}
\end{equation}
The variational problem associated with (\ref{PB:homo}) writes: find
$u_\eta^\star \in H^1_0(\mathcal{D})$ such that  
$$
\forall v \in H^1_0(\mathcal{D}), \quad 
{\cal A}_\eta^\star(u_\eta^\star,v)=b(v),
$$
where
\begin{equation}
{\cal A}^\star_\eta(u,v) = \int_\mathcal{D} \left(\nabla
  v(x)\right)^T A_\eta^\star \nabla u(x) \, dx 
\quad \mbox{and} \quad
b(v)=\int_\mathcal{D} f(x)v(x) \, dx 
\label{def:homo-bili}.
\end{equation}

As shown in~\cite{Blanc2010,Ronan2010}, in the weakly stochastic
setting, the homogenized matrix $A_\eta^\star$ can be expanded in terms
of a series in powers of $\eta$:
\begin{equation}
A^\star_\eta = A_{per}^\star + \eta A_1^\star + \eta^2 A_2^\star(\eta), 
\label{exp:Astareta}
\end{equation}
where $A_2^\star(\eta)$ is a deterministic matrix, that depends on
$\eta$ and is bounded as
$\eta\rightarrow 0$, and where, for any $1 \leq i,j \leq d$,
\begin{eqnarray}
(A_{per}^\star)_{ij} 
&=& 
\int_Q (e_i + \nabla \wper_{e_i})^T A_{per} (e_j + \nabla
\wper_{e_j}), 
\label{def:A0star}
\\
(A_1^\star)_{ij}     
&=& 
\int_Q (e_i + \nabla \wper_{e_i})^T \esp(A_1) (e_j + \nabla
\wper_{e_j}),
\label{def:A1star}
\end{eqnarray}
where, for any $p \in \RR^d$, $\wper_p$ is the unique (up to the
addition of a constant) solution 
to the deterministic corrector problem associated to the periodic matrix
$A_{per}$: 
\begin{equation}
\left\{
\begin{array}{ll}
\dps-\mbox{div} \left[A_{per}(p+\nabla \wper_p)\right] = 0, 
\\
w_p^0 \mbox{ is $Q$-periodic}.
\end{array}
\right.
\label{PB:w0}
\end{equation}
Under the assumption~\eqref{struc:A1}, we have $A_1^\star =
\esp(X_0) \, \overline{B}$, with
\begin{equation}
\label{def:overlineB}
\forall 1 \leq i,j \leq d, \quad
\overline{B}_{ij} = 
\int_Q (e_i + \nabla \wper_{e_i})^T B_{per} (e_j + \nabla \wper_{e_j}).
\end{equation}

\begin{remark}
In general, when $A_{per}$ is not symmetric, the expression of
$A^\star_1$ includes additional terms. Indeed, writing $\nabla w^\eta_p =
\nabla \wper_p + \eta \nabla w^1_p + O(\eta^2)$, we in general need
$\esp(\nabla w^1_p)$ to compute $A^\star_1$ (see
e.g.~\cite{Ronan2010,Blanc2010}). In the symmetric case, these
additional terms vanish, see e.g.~\cite[Remark 4.2
p.~117]{these-arnaud}. In the non-symmetric case, the
expression~\eqref{def:A1star} of 
$A^\star_1$ needs to be slightly modified, but the
expansion~\eqref{exp:Astareta} remains true. Our arguments hence carry
over to the non-symmetric case.
\end{remark}

Using the expansion~\eqref{exp:Astareta} of $A_\eta^\star$ with respect
to $\eta$, it is easy to see that the solution $u_\eta^\star$
to~\eqref{PB:homo} can also be expanded in a series in powers of $\eta$.
We have 
\begin{equation}
\label{exp:ustareta}
u_\eta^\star = u_0^\star + \eta \esp(X_0) \overline{u}_1^\star + \eta^2
r_\eta
\quad \text{with} \quad
\|r_\eta\|_{H^1(\mathcal{D})} \leq C,
\end{equation}
where $C$ is a constant independent of $\eta$, and where 
$u_0^\star$ solves
\begin{equation}
\label{eq:homog-0}
-\hbox{div}\left[A^\star_{per} \nabla u_0^\star \right] = f
\ \ \text{in $\mathcal{D}$}, 
\quad 
u_0^\star = 0 \ \ \text{on $\partial \mathcal{D}$},
\end{equation}
and $\overline{u}_1^\star$ solves
\begin{equation}
\label{PB:u1barstar}
-\hbox{div}\left[A^\star_{per} \nabla \overline{u}_1^\star \right]
= 
\hbox{div}\left[\overline{B} \nabla u_0^\star \right] 
\ \ \text{in $\mathcal{D}$}, 
\quad 
\overline{u}_1^\star=0 \ \ \text{ on $\partial\mathcal{D}$}.
\end{equation}
The expansion~\eqref{exp:ustareta} will be
useful in the sequel. We will also need a bound on $u_\eta^\star$ and
$r_\eta$ in the $H^2$ norm. Recall that $u_\eta^\star$ is the solution
to~\eqref{PB:homo}, whereas $r_\eta$ is solution to
\begin{equation}
\label{PB:r_eta}
-\hbox{div}\left[A^\star_\eta \nabla r_\eta \right]
= 
\hbox{div}\left[ A^\star_2(\eta) \left( \nabla u_0^\star 
+ \eta \esp(X_0) \nabla \overline{u}_1^\star 
\right) +  
\esp(X_0) A^\star_1 \nabla \overline{u}_1^\star 
\right] 
\ \ \text{ in $\mathcal{D}$}, 
\quad
r_\eta =0 \ \ \text{ on $\partial\mathcal{D}$}.
\end{equation}
In view of~\eqref{eq:unif_eta}, we have, almost surely and almost
everywhere, 
$a_- \ \text{Id} \leq A_\eta \leq a_+ \ \text{Id}$ in the sense of symmetric
matrices. Recalling that homogenization preserves the order of symmetric
matrices (see e.g.~\cite[page~12]{composites}), we deduce that
$$
\forall \eta, \ \forall \xi \in \RR^d, \quad 
a_- |\xi|^2 \leq A^\star_\eta \xi \cdot \xi 
\leq a_+ |\xi|^2.
$$
In addition, the right-hand sides of~\eqref{PB:homo}
and~\eqref{PB:r_eta} are bounded 
uniformly in $\eta$ in the $L^2$ norm. 
Using~\cite[Theorems~9.15 and~9.14]{gilbarg-trudinger},
we obtain that there exists $C$ such that
\begin{equation}
\label{eq:bound_reta_H2}
\forall \eta, \quad 
\| u^\star_\eta\|_{H^2(\mathcal{D})} \leq C
\quad \text{and} \quad
\| r_\eta\|_{H^2(\mathcal{D})} \leq C.
\end{equation}
 
\subsubsection{Two scale expansion of the reference solution $u_\eta^\varepsilon$}
\label{sec:2scale}

As recalled above, the standard error analysis for the MsFEM in the
deterministic setting is performed in the case when the matrix $A^\eps$
in~\eqref{0} reads $A^\eps(x) \equiv A_{per}(x/\eps)$ for a fixed
periodic matrix $A_{per}$. The problem~\eqref{0} then admits a
homogenized limit. To obtain bounds on the MsFEM error, one step of the
proof is to approximate the oscillatory solution $u^\varepsilon$ by its
two-scale expansion  $\dps u^\star + \varepsilon \sum_{i=1}^d 
\wper_{e_i} \left( \frac{\cdot}{\varepsilon} \right) \partial_i u^\star$,
where $u^\star$ is the homogenized solution, $\wper_p$ is the
periodic corrector associated to $p \in \RR^d$, and $\dps \partial_i
u^\star = \frac{\partial u^\star}{\partial x_i}$. In the deterministic
case, it is known (see e.g.~\cite{blp,cd,Jikov1994}) that, under some
regularity assumptions on $A_{per}$ and $u^\star$,  
\begin{equation}
\label{eq:rate}
\left\| u^\varepsilon - \left[ 
u^\star + \varepsilon \sum_ {i=1}^d 
\wper_{e_i} \left( \frac{\cdot}{\varepsilon} \right) \partial_i u^\star 
\right] \right\|_{H^1(\mathcal{D})} \leq C \sqrt{\varepsilon}
\end{equation}
for a constant $C$ independent of $\varepsilon$. 

In the stochastic case, it is known
that $\dps \esp \left[ \left\| u^\varepsilon - \left[ 
u^\star + \varepsilon \sum_{i=1}^d 
w_{e_i} \left( \frac{\cdot}{\varepsilon},\omega \right) \partial_i u^\star 
\right] \right\|^2_{H^1(\mathcal{D})} \right]$ converges to 0 as $\varepsilon \to
0$ (see~\cite[Theorem~3]{Papanicolaou-Varadhan}), but no rate of
convergence is known (except in some one-dimensional situations, see
e.g.~\cite{bal_residu,bourgeat_residu,residu-bll}).  
However, in the present article, and as announced above, we consider a
{\em weakly} stochastic 
case. In this setting, we have derived in~\cite{rate} a result similar
to~\eqref{eq:rate}. We now state this result, which will
be useful for our analysis.

\begin{theorem}[from~\cite{rate}, Theorem 2]
\label{exp2scaleueta}
Assume $d>1$.
Let $u_\eta^\varepsilon$ be the solution to~\eqref{PB:stoch-a}, and
assume that $A_\eta$
satisfies~\eqref{def:A-eta-1}-\eqref{struc:A1}-\eqref{hyp:a_holder}-\eqref{hyp:b_holder}.  
Let $A^\star_{per}$, $\wper_p$, $u_0^\star$ and $\overline{u}_1^\star$
be defined by~\eqref{def:A0star}, \eqref{PB:w0},~\eqref{eq:homog-0}
and~\eqref{PB:u1barstar}.
The two-scale expansion $v^\varepsilon_\eta$ of $u_\eta^\varepsilon$
reads
\begin{multline}
\label{expueta}
v^\varepsilon_\eta(\cdot,\omega) = u_0^\star + \eta \esp(X_0)
\overline{u}_1^\star + \varepsilon \sum\limits_{i=1}^d
\left[ \wper_{e_i}\left(\frac{\cdot}{\varepsilon}\right) (\partial_i u_0^\star
  + \eta \esp(X_0) \partial_i \overline{u}_1^\star) \right. 
\\
\left.+ \eta \esp(X_0) \psi_{e_i} \left(\frac{\cdot}{\varepsilon}\right)
  \partial_i u_0^\star + \eta \sum\limits_{k \in I_\varepsilon}
  (X_k(\omega)-\esp(X_0))
\ \chi_{e_i}\left(\frac{\cdot}{\varepsilon}-k\right)\partial_i u_0^\star
\right],
\end{multline}
where
$$
I_\eps = \left\{ k \in \ZZ^d \text{ such that } \varepsilon(Q+k)
  \cap \mathcal{D} \ne \emptyset \right\}, 
$$
and where, for any $p \in \RR^d$, $\psi_p$ is the solution (unique up to the
addition of a constant) to
\begin{equation}\label{def:psip}
\left\{ 
\begin{array}{l}
-\hbox{div}\left[A_{per} \nabla \psi_p \right]
= 
\hbox{div}\left[B_{per} \left(p + \nabla \wper_p\right) \right],
\\
\psi_p \ \text{is $Q$-periodic},
\end{array}
\right.
\end{equation}
and $\chi_p$ is the unique solution to
\begin{equation}
\label{eq:def_chi}
 \left\{ 
\begin{array}{l l}
-\hbox{div}\left[A_{per} \nabla \chi_{p} \right]=
\hbox{div}\left[\mathbf{1}_{Q} B_{per}(p + \nabla \wper_p)\right] & \text{
  in $\RR^d$,} 
\\
\chi_{p} \in L^2_{loc}(\RR^d), \quad 
\nabla \chi_{p} \in \left(L^2(\RR^d)\right)^d,
\\
\dps \lim_{|x| \to \infty} \chi_p(x) = 0.
\end{array}
\right.
\end{equation}
We assume that $u_0^\star \in
W^{2,\infty}(\mathcal{D})$ and
$\overline{u}_1^\star \in W^{2,\infty}(\mathcal{D})$.
Then
\begin{equation}
\sqrt{ \esp \left[ 
\|u_\eta^\varepsilon-v^\varepsilon_\eta\|^2_{H^1(\mathcal{D})} 
\right] }
\leq C \left(\sqrt{\varepsilon} + \eta \sqrt{\varepsilon
    \ln(1/\varepsilon)} + \eta^2 \right), 
\label{restueta}
\end{equation}
where $C$ is a constant independent of $\varepsilon$ and $\eta$.
\end{theorem}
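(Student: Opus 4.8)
The plan is to follow the classical energy method for two-scale expansions, as in~\cite[Chap.~1]{Jikov1994}, but organized as a double expansion in the oscillation scale $\eps$ and in the perturbation parameter $\eta$. First I would record the $\eta$-expansions that underlie the very definition of $v^\eps_\eta$: the homogenized matrix and solution expand as in~\eqref{exp:Astareta} and~\eqref{exp:ustareta}, and the stochastic corrector $w^\eta_p$ of the perturbed cell problem expands as $w^\eta_p = \wper_p + \eta\, w^1_p + O(\eta^2)$, where $\wper_p$ solves~\eqref{PB:w0}. The first-order corrector $w^1_p$ splits into a \emph{periodic}, deterministic part governed by $\psi_p$ (see~\eqref{def:psip}) and a genuinely \emph{random}, mean-zero fluctuation carried by the whole-space corrector $\chi_p$ of~\eqref{eq:def_chi}; the equations~\eqref{def:psip}--\eqref{eq:def_chi} are designed precisely so that these two pieces cancel the $\eta$-order oscillatory residual generated by $A_1$. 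With these identifications, $v^\eps_\eta$ in~\eqref{expueta} is exactly the two-scale expansion of $u^\eps_\eta$ truncated at order $\eps^2$ and order $\eta^2$.

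Next I would set $R^\eps(\cdot,\omega) = u^\eps_\eta - v^\eps_\eta$, insert $v^\eps_\eta$ into $-\operatorname{div}\!\big[A_\eta(\cdot/\eps,\omega)\nabla\,\cdot\,\big]$, and collect the residual. By construction the $O(1/\eps)$ and $O(1)$ bulk oscillatory terms cancel, thanks to the corrector equations~\eqref{PB:w0},~\eqref{def:psip},~\eqref{eq:def_chi} together with the homogenized equations~\eqref{eq:homog-0} and~\eqref{PB:u1barstar}. What survives organizes into three families: (i) the usual periodic homogenization boundary-layer terms, supported in an $O(\eps)$ neighbourhood of $\partial\mathcal{D}$, coming from the fact that the correctors do not vanish on $\partial\mathcal{D}$; (ii) an order-$\eta^2$ remainder produced by truncating the $\eta$-series, involving $A_2^\star(\eta)$ and $r_\eta$; and (iii) a random, mean-zero oscillatory flux carried by $\nabla\chi$. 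Testing the equation for $R^\eps$ against $R^\eps$ and using the uniform coercivity~\eqref{eq:unif_eta} reduces the problem to estimating each of these three families in the natural dual norm.

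The deterministic pieces are handled as in periodic homogenization. The boundary-layer contribution of the $\eta^0$ part reproduces the classical rate~\eqref{eq:rate} and yields the $\sqrt{\eps}$ term, while the boundary layers of the $\eta$-order periodic correctors ($\wper_{e_i}$ acting on $\partial_i\overline{u}_1^\star$ and $\psi_{e_i}$ acting on $\partial_i u_0^\star$) contribute a term of size $\eta\sqrt{\eps}$; both use the $W^{2,\infty}$ regularity of $u_0^\star$ and $\overline{u}_1^\star$ assumed in the statement. The $\eta^2$ truncation remainder is controlled using the uniform $H^2$ bounds~\eqref{eq:bound_reta_H2} on $u_\eta^\star$ and $r_\eta$, giving the $\eta^2$ term of~\eqref{restueta}.

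The delicate part, which I expect to be the main obstacle, is the random oscillatory flux in family (iii). Here I would exploit the i.i.d.\ structure~\eqref{struc:A1}: the leading random contribution is $\dps \eta \sum_{k\in I_\eps}(X_k-\esp(X_0))\,(\nabla\chi_{e_i})(\cdot/\eps - k)\,\partial_i u_0^\star$, and since the $X_k$ are independent and centred all cross terms drop upon taking expectations, leaving
\[
\esp\!\left[ \left\| \eta \sum_{k\in I_\eps} (X_k-\esp(X_0))\,(\nabla\chi_{e_i})\left(\tfrac{\cdot}{\eps}-k\right) \partial_i u_0^\star \right\|_{L^2(\mathcal{D})}^2 \right] = \eta^2\,\var(X_0) \int_{\mathcal{D}} \sum_{k\in I_\eps} \left|(\nabla\chi_{e_i})\left(\tfrac{x}{\eps}-k\right)\right|^2 |\partial_i u_0^\star(x)|^2 \, dx .
\]
The right-hand side is only $O(\eta^2)$ and carries no smallness in $\eps$, so a crude $L^2$ bound is too lossy; the gain must come from estimating this flux in the dual $H^{-1}$ norm, which morally amounts to one integration by parts and replaces $\nabla\chi$ by the more slowly decaying $\chi$. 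The key analytic input is then the decay of the whole-space corrector, obtained from Green-function estimates for $-\operatorname{div}(A_{per}\nabla\,\cdot\,)$ under the H\"older regularity~\eqref{hyp:a_holder}--\eqref{hyp:b_holder}: since the source in~\eqref{eq:def_chi} is compactly supported, $\chi_{e_i}(x)$ decays like $|x|^{1-d}$ and $\nabla\chi_{e_i}(x)$ like $|x|^{-d}$ as $|x|\to\infty$. Feeding this into the lattice sum produces the borderline quantity $\sum_{|k|\lesssim 1/\eps}|k|^{2-2d}$, which is logarithmically divergent in the critical dimension $d=2$ and bounded by $C\ln(1/\eps)$ in general; combined with the remaining $\eps$-scaling this yields exactly the $\eta\sqrt{\eps\ln(1/\eps)}$ contribution. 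This is also precisely where the hypothesis $d>1$ enters, the one-dimensional case being genuinely different and treated separately in Section~\ref{sec:1d}. Collecting the three families and taking the square root gives~\eqref{restueta}.
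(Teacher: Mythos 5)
You should first be aware that this paper contains no proof of Theorem~\ref{exp2scaleueta}: the result is imported verbatim from the companion work~\cite{rate} (its Theorem~2), and the only piece of that proof reproduced here is Lemma~\ref{lem:chi}, an $L^2$ bound on the translates $\chi_p(\cdot/\eps-k)$ with the factor $R_{d,\eps}$ that is logarithmic only in $d=2$. Measured against what the paper does quote, your architecture is the right one: the double expansion in $\eps$ and $\eta$, the identification of $\psi_p$ in~\eqref{def:psip} as the averaged part and $\chi_p$ in~\eqref{eq:def_chi} as the mean-zero fluctuating part of the first-order corrector $w^1_p$, the energy/residual method, the boundary layers producing $\sqrt{\eps}$ and $\eta\sqrt{\eps}$, the use of the uniform $H^2$ bounds of type~\eqref{eq:bound_reta_H2} for the $\eta^2$ truncation term, and the Green-function decay of $\chi_p$ under the H\"older assumption~\eqref{hyp:a_holder} (an Avellaneda--Lin-type input, which the paper indeed invokes elsewhere via~\cite{avellaneda-lin-87}). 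Your identification of where $d>1$ enters is also correct: the decay condition $\chi_p \to 0$ in~\eqref{eq:def_chi} fails in dimension one, which is exactly why the paper modifies the problem into~\eqref{eq:def_chi-1d} in Section~\ref{sec:1d}.

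There is however a genuine gap at the one step you yourself flag as decisive, and it is not merely unfinished --- your heuristic for it does not reproduce the stated bound. The lattice sum $\sum_{|k|\lesssim 1/\eps}|k|^{2-2d}$ is logarithmically divergent only in $d=2$ and is $O(1)$ for $d\geq 3$; this dimension-sensitivity is confirmed by the factor $R_{d,\eps}$ in Lemma~\ref{lem:chi}. Yet the bound~\eqref{restueta} carries $\eta\sqrt{\eps\ln(1/\eps)}$ in \emph{every} dimension $d>1$, so either the logarithm has a dimension-independent source that your sketch misses (for instance a maximal-type control over the $O(\eps^{-d})$ i.i.d.\ cell variables, whose maximum scales like $\sqrt{\ln(1/\eps)}$ in any dimension --- compare the mechanism in Lemma~\ref{coro:lambda} of this paper --- or the interaction of the boundary cutoff with the slowly decaying, non-locally-supported translates of $\chi_p$), or your accounting omits the dominant term. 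Relatedly, the phrase ``one integration by parts in $H^{-1}$'' hides the real work: the translates $\chi_{e_i}(\cdot/\eps-k)$ overlap, so cross terms disappear only after taking expectation; the $O(\eta)$ oscillation of $\nabla u^\eps_\eta$ must be shown to cancel the $\nabla\chi$ sum in $\nabla v^\eps_\eta$ through the residual equation (a crude $L^2$ bound on that sum is $O(\eta)$ with no $\eps$-smallness, as you correctly note, and indeed this is exactly how the standalone $\eta$ term arises in the paper's own Theorem~\ref{theo:H1}); and every surviving term, including those involving the cutoff and $\partial_i u_0^\star$, must be checked to be $O(\sqrt{\eps}+\eta\sqrt{\eps\ln(1/\eps)}+\eta^2)$ in mean square. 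As written, your concluding sentence ``combined with the remaining $\eps$-scaling this yields exactly the $\eta\sqrt{\eps\ln(1/\eps)}$ contribution'' is an assertion standing in for precisely the computation the theorem requires.
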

As pointed out above, and in~\cite{rate}, the
assumptions~\eqref{hyp:a_holder}-\eqref{hyp:b_holder} are standard
assumptions when proving convergence rates of two-scale expansions 
(see e.g.~\cite[p.~28]{Jikov1994}). Likewise, the assumption $u_0^\star
\in W^{2,\infty}(\mathcal{D})$ (and subsequently
$\overline{u}_1^\star \in W^{2,\infty}(\mathcal{D})$) is a
standard assumption (see e.g.~\cite[Theorem
2.1]{allaire-amar} and~\cite[p.~28]{Jikov1994}).
In view of~\eqref{eq:homog-0}, this assumption implies that
the right hand side $f$ in~\eqref{PB:stoch-a} belongs to
$L^\infty(\mathcal{D})$. 

In dimension $d=1$, the boundary conditions of~\eqref{eq:def_chi} need
to be modified for this problem to have a solution. We have derived
in~\cite{rate} the following result, which is the one-dimensional
version of Theorem~\ref{exp2scaleueta} (note that we need below weaker
assumptions than in Theorem~\ref{exp2scaleueta}, as pointed out
in~\cite{rate}: we do not need to
assume~\eqref{hyp:a_holder}-\eqref{hyp:b_holder}, and the assumption $f
\in L^2({\cal D})$ is enough): 

\begin{theorem}[from~\cite{rate}, Theorem 3]
\label{exp2scaleueta-1D}
Assume that the dimension $d$ is equal to one.
Let $u_\eta^\varepsilon$ be the solution to~\eqref{PB:stoch-a} in the
domain ${\cal D}$ with $f \in L^2({\cal D})$, and assume that $A_\eta$
satisfies~\eqref{def:A-eta-1}-\eqref{struc:A1}.
Let $v^\varepsilon_\eta$ be defined by~\eqref{expueta}, where the
definition~\eqref{eq:def_chi} is replaced by
\begin{equation}
\label{eq:def_chi-1d}
\left\{ 
\begin{array}{l l}
-\left[ A_{per} \chi' \right]' =
\left[\mathbf{1}_{(0,1)} B_{per}(1 + (\wper)') \right]' & \text{
  in $\RR$,} 
\\
\chi \in L^2_{loc}(\RR), \quad 
\chi' \in L^2(\RR),
\end{array}
\right.
\end{equation}
where $\wper$ solves~\eqref{PB:w0}.
Then
\begin{equation}
\label{restueta-1d-H1-Linfty}
\sqrt{ \esp \left[ 
\|u^\varepsilon_\eta-v^\varepsilon_\eta\|^2_{L^\infty(\mathcal{D})} 
\right] }
+
\sqrt{ \esp \left[ 
\|u^\varepsilon_\eta-v^\varepsilon_\eta\|^2_{H^1(\mathcal{D})} 
\right] }
\leq 
C \left( \varepsilon + \eta \sqrt{\varepsilon} + \eta^2 \right),
\end{equation}
where $C$ is a constant independent of $\varepsilon$ and $\eta$.
\end{theorem}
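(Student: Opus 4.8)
The plan is to exploit the fact that, in one space dimension, problem~\eqref{PB:stoch-a} is solvable in closed form, so that $u_\eta^\eps$, the homogenized solution, and all the correctors entering the two-scale expansion~\eqref{expueta} can be written explicitly and compared by hand. Setting $F(x)=\int_0^x f$, the equation integrates once to $A_\eta(x/\eps,\omega)\,(u_\eta^\eps)'(x)=c_\eta(\omega)-F(x)$, where the integration constant $c_\eta(\omega)$ is fixed by $u_\eta^\eps(1,\omega)=0$, i.e. $c_\eta(\omega)=\left(\int_0^1 A_\eta(t/\eps,\omega)^{-1}dt\right)^{-1}\int_0^1 F(t)\,A_\eta(t/\eps,\omega)^{-1}\,dt$. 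The periodic corrector satisfies $1+(\wper)'=A^\star_{per}/A_{per}$ with $A^\star_{per}$ the harmonic mean of $A_{per}$, and, crucially, integrating~\eqref{eq:def_chi-1d} once shows that the integration constant there must vanish for $\chi'$ to be square integrable on $\RR$, so that in one dimension $\chi'$ is \emph{compactly supported in} $Q$ (unlike the slowly decaying $\nabla\chi$ of the multidimensional problem~\eqref{eq:def_chi}). Since we thus control $(u_\eta^\eps)'$ pointwise, it suffices to bound $\esp\big[\|(u_\eta^\eps-v_\eta^\eps)'\|_{L^2(\mathcal D)}^2\big]$; the remaining $L^2$ and $L^\infty$ norms on the left of~\eqref{restueta-1d-H1-Linfty} then follow from the one-dimensional embedding $H^1(0,1)\hookrightarrow C^0([0,1])$, once the $O(\eps)$ value of $u_\eta^\eps-v_\eta^\eps$ at the endpoints is accounted for.

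First I would expand everything in powers of $\eta$, writing $A_\eta^{-1}=A_{per}^{-1}\big(1-\eta\,A_1 A_{per}^{-1}+\eta^2(\cdots)\big)$ and $c_\eta(\omega)=c_0+\eta\,c_1(\omega)+\eta^2(\cdots)$, and organize the residual $(u_\eta^\eps-v_\eta^\eps)'$ by order in $\eta$. The term of order $\eta^0$ reduces, after the explicit computation, to the mismatch between the integration constant $c_0$ and its periodic-homogenized counterpart, which is $O(\eps)$, plus the term $\eps\,\wper(\cdot/\eps)(u_0^\star)''$, also $O(\eps)$ in $L^2$; hence there is no $\sqrt\eps$ boundary-layer loss in one dimension and this order contributes at $O(\eps)$. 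The terms of order $\eta^2$ and higher are collected into a remainder controlled by $C\eta^2$ using the coercivity~\eqref{eq:unif_eta}, the boundedness of $A_1$ from~\eqref{struc:A1}, and the uniform $H^2$ bound~\eqref{eq:bound_reta_H2}; this yields the $\eta^2$ contribution.

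The heart of the matter is the order-$\eta$ residual. Splitting $X_k=(X_k-\esp(X_0))+\esp(X_0)$, the $\eta$-order part of $(u_\eta^\eps)'$ decomposes into a \emph{local} fluctuation, equal on the cell $x\in\eps(Q+k)$ to $-(X_k-\esp(X_0))\,(u_0^\star)'(x)\,A^\star_{per}B_{per}/A_{per}^2$, a deterministic periodic part, and a \emph{global} contribution carried by the constant $c_1(\omega)$. The explicit identity $\chi'(y-k)=-A^\star_{per}B_{per}(y)/A_{per}(y)^2$ (which follows from periodicity and $1+(\wper)'=A^\star_{per}/A_{per}$) shows that the fluctuation corrector $\sum_{k}(X_k-\esp(X_0))\,\chi'(\cdot/\eps-k)\,(u_0^\star)'$ present in $v_\eta^\eps{}'$ \emph{exactly cancels the leading local fluctuation}, cell by cell. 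The deterministic periodic part is matched, as at order $\eta^0$, up to $O(\eps)$. What survives is the fluctuation of the global constant $c_1(\omega)$, which involves the empirical mean $\eps\sum_k X_k$ of the i.i.d. variables $(X_k)$ over the $O(1/\eps)$ cells meeting $\mathcal D$; by independence, $\var\big(\eps\sum_k X_k\big)=O(\eps)$, so this surviving term is $O(\sqrt\eps)$ in $L^2(\Omega)$ and produces the $\eta\sqrt\eps$ contribution to~\eqref{restueta-1d-H1-Linfty}. The absence of the logarithm present in the multidimensional estimate~\eqref{restueta} is precisely the dividend of $\chi'$ being compactly supported in one dimension.

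The hard part will be the careful bookkeeping of this order-$\eta$ residual: separating the locally cancelling part from the surviving global-average fluctuation, and treating the partial boundary cells $k\in I_\eps$ that straddle $\partial\mathcal D$, where the cell-by-cell cancellation is only approximate and must be absorbed into the $O(\eps)$ remainder. The key analytic inputs are the independence of the $X_k$ from~\eqref{struc:A1}, which makes the variance of $\eps\sum_k X_k$ collapse to $O(\eps)$, and the regularity $(u_0^\star)'\in L^\infty(\mathcal D)$ together with $(u_0^\star)''\in L^2(\mathcal D)$, both available in one dimension from $f\in L^2(\mathcal D)$ alone (since then $u_0^\star\in H^2\hookrightarrow C^1$), so that, as noted after Theorem~\ref{exp2scaleueta}, no $W^{2,\infty}$ assumption is needed here. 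Summing the three orders in $\eta$ then gives~\eqref{restueta-1d-H1-Linfty}.
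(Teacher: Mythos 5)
A direct comparison is not possible here: the paper does not prove this theorem, but imports it verbatim from~\cite{rate} (Theorem 3), so your proposal can only be judged on its own merits. On those merits it is essentially correct, and it follows the route one would expect for the one-dimensional case (and the one taken in the literature the paper builds on, e.g.~\cite{bourgeat_residu,bal_residu}): integrate the equation once to get $a_\eta(x/\eps,\omega)\,(u_\eta^\eps)'=c_\eta(\omega)-F$, expand $a_\eta^{-1}$ and $c_\eta$ in powers of $\eta$, and compare term by term with the derivative of~\eqref{expueta}. Your key structural observations check out: integrating~\eqref{eq:def_chi-1d} forces the integration constant to vanish (otherwise $\chi'=c/a_{per}\notin L^2(\RR)$ off $(0,1)$), hence $\chi'=-\mathbf{1}_{(0,1)}A_{per}^\star B_{per}/A_{per}^2$ is compactly supported; combined with $1+(\wper)'=A^\star_{per}/A_{per}$ this gives the exact cell-by-cell cancellation of the local fluctuation at order $\eta$; the surviving randomness sits in $c_1(\omega)$, a weighted empirical mean of the i.i.d.\ $X_k$ over $O(1/\eps)$ cells whose variance is $O(\eps)$ by~\eqref{struc:A1}, yielding the $\eta\sqrt{\eps}$ term and explaining the absence of the logarithm of~\eqref{restueta}; and the $\eps^0$-order mismatch $(c_0-c_0^\star)/a_{per}$ is $O(\eps)$, consistent with the absence of boundary layers in dimension one. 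You also correctly note that $f\in L^2$ suffices, since then $u_0^\star\in H^2(0,1)\hookrightarrow C^1$.

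Two small inaccuracies, neither fatal because both stay within the error budget. First, only $\chi'$ is compactly supported; $\chi$ itself generically takes two different constant values at $\pm\infty$ (as $\int_\RR\chi'\neq 0$), so at the endpoints $x=0,1$ the sum $\eta\eps\sum_k(X_k-\esp(X_0))\chi(x/\eps-k)\,\partial u_0^\star$ involves $O(1/\eps)$ nondecaying terms and contributes $O(\eta\sqrt{\eps})$ in $L^2(\Omega)$, not the plain $O(\eps)$ you assert for the endpoint values; this is still absorbed by the right-hand side of~\eqref{restueta-1d-H1-Linfty}, but the bookkeeping should say so. Second, invoking $r_\eta$ and~\eqref{eq:bound_reta_H2} is off target: the truncated expansion~\eqref{expueta} contains no $\eta^2 r_\eta$ term, and in one dimension the $O(\eta^2)$ remainder is obtained directly and deterministically from $a_\eta^{-1}=a_{per}^{-1}-\eta a_1 a_{per}^{-2}+\eta^2 a_1^2 a_{per}^{-2}a_\eta^{-1}$ together with the uniform bounds~\eqref{eq:unif_eta} on the explicit formula, with no elliptic-regularity input needed.
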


The following estimate, which is proved in~\cite[proof of Proposition
11]{rate} and useful to demonstrate~\eqref{restueta}, will also be
useful here: 

\begin{lemme}[from~\cite{rate}, proof of Proposition 11]
\label{lem:chi}
We assume~\eqref{hyp:a_holder} and $d>1$.
For any $p \in \RR^d$, any $k \in \ZZ^d$, and any bounded domain ${\cal
  D} \subset \RR^d$, the function $\chi_p$ defined
by~\eqref{eq:def_chi} satisfies 
\begin{equation}
\label{eq:bound_chi}
\left\|
\chi_p\left(\frac{\cdot}{\varepsilon}-k\right)\right\|_{L^2(\mathcal{D})}^2 
\leq
C \varepsilon^d R_{d,\eps},
\end{equation}
for a constant $C$ independent of $k$ and $\eps$, where $R_{d,\eps} = 1$ if
$d>2$, and $R_{d,\eps} = 1+\ln(1/\varepsilon)$ if $d=2$.
\end{lemme}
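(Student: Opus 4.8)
The plan is to first establish a pointwise decay estimate on $\chi_p$ itself, and then to deduce the claimed $L^2$ bound by a change of variables followed by an elementary radial integration.

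First I would note that $\chi_p$ solves, on all of $\RR^d$, the elliptic equation $-\mbox{div}\left[A_{per}\nabla\chi_p\right] = \mbox{div}\left[F\right]$ with the compactly supported source $F := \mathbf{1}_Q B_{per}(p+\nabla\wper_p)$, which vanishes outside $Q$. Under assumption~\eqref{hyp:a_holder}, Schauder estimates applied to~\eqref{PB:w0} give $\wper_p \in C^{1,\alpha}$, so $\nabla\wper_p \in L^\infty$ and hence $F \in L^\infty$ with compact support. Denoting by $G(x,y)$ the Green function of $-\mbox{div}\left[A_{per}\nabla\cdot\right]$ on $\RR^d$, the decay condition $\lim_{|x|\to\infty}\chi_p(x)=0$ together with $\nabla\chi_p \in (L^2(\RR^d))^d$ selects the potential representation
$$
\chi_p(x) = -\int_Q \nabla_y G(x,y)\cdot F(y)\,dy .
$$
The crucial ingredient is the classical gradient bound $|\nabla_y G(x,y)| \le C\,|x-y|^{1-d}$, which holds \emph{precisely because} $A_{per}$ is H\"older continuous.

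Then I would use this representation to bound $\chi_p$ in two regimes. For $x$ in a bounded set the singularity $|x-y|^{1-d}$ is integrable over $Q$, so $|\chi_p(x)| \le \|F\|_{L^\infty(Q)}\int_Q |x-y|^{1-d}\,dy \le C$; for $|x|$ large one has $|x-y|\sim|x|$ uniformly in $y \in Q$, whence $|\chi_p(x)| \le C\,\|F\|_{L^1(Q)}\,|x|^{1-d}$. Combining the two regimes yields the global decay estimate
$$
\forall x \in \RR^d, \quad |\chi_p(x)| \le C\,(1+|x|)^{1-d}.
$$

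Finally, changing variables $y = x/\eps - k$ gives
$$
\left\|\chi_p\left(\frac{\cdot}{\eps}-k\right)\right\|_{L^2(\mathcal{D})}^2 = \eps^d \int_{\eps^{-1}\mathcal{D}-k} |\chi_p(y)|^2\,dy \le C\,\eps^d \int_{\eps^{-1}\mathcal{D}-k} (1+|y|)^{2-2d}\,dy .
$$
Since $(1+|y|)^{2-2d}$ is radially decreasing, the Hardy--Littlewood rearrangement inequality bounds the last integral, \emph{uniformly in $k$}, by its value on the centered ball $B(0,\rho)$ of the same volume as $\eps^{-1}\mathcal{D}$, so that $\rho \le C\eps^{-1}$. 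A direct computation in polar coordinates then gives $\int_{B(0,\rho)}(1+|y|)^{2-2d}\,dy \le C$ when $d>2$ (the integrand being globally integrable, as $2-2d+(d-1)<-1$) and $\le C(1+\ln(1/\eps))$ when $d=2$, which is exactly $R_{d,\eps}$. This produces the stated bound with a constant independent of $k$ and $\eps$.

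The main obstacle is the first step: justifying the representation formula and, above all, invoking the gradient Green function estimate $|\nabla_y G(x,y)| \le C|x-y|^{1-d}$ on $\RR^d$, which is the only place where the H\"older regularity~\eqref{hyp:a_holder} is genuinely used. Once the pointwise decay $|\chi_p(x)| \le C(1+|x|)^{1-d}$ is in hand, the change of variables and the radial integration (including the split between $d>2$ and $d=2$ that creates the logarithm) are entirely routine.
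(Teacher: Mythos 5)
Your argument is correct and follows essentially the same route as the original proof (which this paper does not reproduce but quotes from~\cite{rate}): a Green-function representation combined with the Avellaneda--Lin gradient bound $|\nabla_y G(x,y)|\leq C|x-y|^{1-d}$ for H\"older-continuous periodic coefficients --- precisely where~\eqref{hyp:a_holder} enters --- yielding the pointwise decay $|\chi_p(x)|\leq C(1+|x|)^{1-d}$, followed by rescaling and radial integration, which produces the logarithmic factor in dimension $d=2$. Your rearrangement argument for uniformity in $k$ is a clean way to handle the translated domain $\eps^{-1}\mathcal{D}-k$ (the reference simply encloses it in a ball of radius $C/\eps$), but this is a presentational variation, not a different proof.
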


\subsubsection{Main result}
\label{sec:weak_stochastic-MsFEM}

Before presenting our main result, we need some useful notation.
Following the approach presented in Section~\ref{sec:weak-stoch}, we
recall that 
$$
\mathcal{W}_h := \mbox{span}(\phi_i^\eps, \ i=1,\cdots,L),
$$
where $\phi^{\varepsilon}_i$ are the highly oscillatory MsFEM basis
functions. By construction, the solution $u_S \in \mathcal{W}_h$ 
of the weak stochastic MsFEM approach~\eqref{eq:var-mac-stoch} satisfies
\begin{equation}
\forall v_h \in \mathcal{W}_h, \quad 
\mathcal{A}^h_{\varepsilon,\eta}(u_S,v_h)=b(v_h) \quad \text{a.s.}
\label{eq:var-mac}
\end{equation}
where, for any $u$ and $v$ in $\mathcal{W}_h$, 
\begin{equation}
\mathcal{A}^h_{\varepsilon,\eta}(u,v) =
\sum\limits_{\cell\in\mathcal{T}_h} \int_\cell 
\left( \nabla v(x) \right)^T A_\eta\left(\frac{x}{\varepsilon},\omega\right)
\nabla u(x) \, dx 
\quad \text{and} \quad
b(v)= \int_{\mathcal D} f(x) v(x) \, dx. 
\label{def:stoch-bili}
\end{equation}
For future use, we also define, on the standard finite element space
$$
\mathcal{V}_h := \mbox{span}(\phi_i^{0}, \ i=1,\cdots,L),
$$
the forms
\begin{equation}
\widetilde{\cal A}^h_{\varepsilon,\eta}(u,v) =
\sum\limits_{\cell\in\mathcal{T}_h} \int_\cell
\left(\nabla\left(\Rop_\cell(v)\right)(x)\right)^T 
A_\eta\left(\frac{x}{\varepsilon},\omega\right)
\nabla\left(\Rop_\cell(u)\right)(x) \, dx 
\quad \mbox {and} \quad
\widetilde{b}_h(v)=\sum\limits_{\cell\in\mathcal{T}_h}\int_\cell\! \! \!
f(x) \Rop_\cell(v)(x) \, dx,
\label{def:stoch-bili-tilde}
\end{equation}
where the local, linear operators $\Rop_\cell$ are defined on
$\mathcal{V}_h$ by
\begin{equation}
\label{eq:R_local}
\forall 1 \leq i \leq L, \quad 
\Rop_\cell(\left. \phi^{0}_i \right|_\cell)=
\left. \phi^{\varepsilon}_i \right|_\cell.
\end{equation}
These local operators give rise to the global operator 
$\Rop : \mathcal{V}_h \to \mathcal{W}_h$ defined by
\begin{equation}
\label{eq:R_global}
\forall \cell, \quad
\forall v \in \mathcal{V}_h, \quad
\left. \Rop(v) \right|_{\cell} = 
\Rop_\cell\left(\left. v \right|_{\cell} \right).
\end{equation}

As pointed out above, the space $\mathcal{W}_h$ is not a subspace of
$H^1_0(\mathcal{D})$, as the basis functions $\phi_i^\varepsilon$ may
have jumps at the finite element boundaries (due to the use of the
oversampling technique). We will hence work with the
broken $H^1$-norm introduced in~\eqref{eq:broken_H1}, that reads, we
recall, 
$$
\forall v_h \in \mathcal{W}_h, \quad 
\|v_h\|_\h = \left[ \sum_{\cell \in \mathcal{T}_h}
  \|v_h\|^2_{H^1(\cell)} \right]^{1/2}.
$$
We are now in position to present the main result of this article. We
introduce the notation $Q_i^\varepsilon=\varepsilon (i + Q)$ for any $i
\in \ZZ^d$, and denote by $N_\cell$ the number of cells
$Q_i^\varepsilon$ in the element $\cell$: 
$\displaystyle N_\cell = \hbox{Card}(i;Q_i^\varepsilon \subset
\cell)$. We make in the theorem below a regularity hypothese on the
macroscopic mesh, assuming that the volume of each element is bounded
from below by $\alpha h^d$, for some $\alpha>0$, and hence that
$N_\cell \geq \alpha \left( h/\eps \right)^d$.
\begin{theorem}
\label{theo:H1}
Assume that $A_\eta$
satisfies~\eqref{def:A-eta-1}-\eqref{struc:A1}-\eqref{hyp:a_holder}-\eqref{hyp:b_holder}.
We assume that $u_0^\star$ and $\overline{u}_1^\star$ respectively defined
by~\eqref{eq:homog-0} and~\eqref{PB:u1barstar} satisfy
$u_0^\star \in W^{2,\infty}(\mathcal{D})$ and
$\overline{u}_1^\star \in W^{2,\infty}(\mathcal{D})$.
Let $u_\eta^\varepsilon$ be
the solution to~\eqref{PB:stoch-a} and $u_S$ be the weakly stochastic
MsFEM solution to~\eqref{eq:var-mac}. Suppose that $d>1$, 
$\eps \leq h$, and that there exists $\alpha>0$, independent of $\cell$,
$h$ and $\varepsilon$, such that $\displaystyle N_\cell \geq \alpha \left(
\frac{h}{\varepsilon} \right)^d$. We then have 
\begin{equation}
\label{est-H1}
\sqrt{\esp \left[ \|u^\varepsilon_\eta - u_S\|^2_\h \right] } \leq C 
\left( \sqrt{\eps} + h + \frac{\varepsilon}{h} + \eta \left(
    \frac{\varepsilon}{h} \right)^{d/2} \ln(N(h)) 
+ \eta + \eta^2 \mathcal{C}(\eta) \right),
\end{equation}
where $C$ is a constant independent of $\varepsilon$, $h$ and $\eta$,
$N(h)$ is the number of elements $\cell$ in the domain ${\cal D}$ (which is of
order $h^{-d}$ in dimension $d$), 
and $\mathcal{C}$ is a bounded function as $\eta$ goes to $0$. 
\end{theorem}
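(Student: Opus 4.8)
The plan is to transpose the deterministic MsFEM analysis of~\cite{Efendiev2000} to the weakly stochastic setting, paying special attention to the extra $\eta$-dependent pieces of the two-scale expansion~\eqref{expueta} that the \emph{deterministic} basis set $\mathcal{W}_h$ (built from $A_{per}(\cdot/\eps)$ alone) does not encode. Because oversampling makes $\mathcal{W}_h\not\subset H^1_0(\mathcal{D})$, I start from the non-conforming Strang estimate of Lemma~\ref{lem:Strang2}, which gives, almost surely,
\begin{equation}
\|u_\eta^\eps - u_S\|_\h \leq C\left[\inf_{v_h\in\mathcal{W}_h}\|u_\eta^\eps - v_h\|_\h + \sup_{w_h\in\mathcal{W}_h,\, w_h\neq 0}\frac{\left|\mathcal{A}^h_{\eps,\eta}(u_\eta^\eps,w_h) - b(w_h)\right|}{\|w_h\|_\h}\right],
\end{equation}
the two summands being the best-approximation error and the consistency (non-conforming) error. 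Inserting the two-scale expansion $v_\eta^\eps$ via the triangle inequality~\eqref{tutu} splits the first summand as $\|u_\eta^\eps - v_\eta^\eps\|_\h + \inf_{v_h}\|v_\eta^\eps - v_h\|_\h$. Squaring, taking expectations, and repeatedly using $\sqrt{a+b}\leq\sqrt a+\sqrt b$ then lets me bound each contribution separately in $L^2(\Omega)$.

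The homogenization contribution is immediate from Theorem~\ref{exp2scaleueta}:
$$
\sqrt{\esp\left[\|u_\eta^\eps - v_\eta^\eps\|^2_{H^1(\mathcal{D})}\right]} \leq C\left(\sqrt\eps + \eta\sqrt{\eps\ln(1/\eps)} + \eta^2\right),
$$
and since $\eta\sqrt{\eps\ln(1/\eps)}$ is dominated by $\sqrt\eps + \eta^2$, this accounts for the $\sqrt\eps$, $\eta$ and $\eta^2\mathcal{C}(\eta)$ terms of~\eqref{est-H1}.

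The heart of the proof is the explicit construction of $v_h\in\mathcal{W}_h$ controlling $\inf_{v_h}\|v_\eta^\eps - v_h\|_\h$. I take $v_h = \Rop(\pi_h)$, where $\pi_h\in\mathcal{V}_h$ is the $\mathbb{P}_1$ nodal interpolant of the slowly varying homogenized part $u_0^\star + \eta\,\esp(X_0)\,\overline u_1^\star$ and $\Rop$ is the reconstruction operator~\eqref{eq:R_global}. By construction $\phi_i^\eps$ reproduces, on each element, the periodic corrector $\wper_{e_i}$, so $v_h$ matches the $\wper_{e_i}(\cdot/\eps)(\partial_i u_0^\star + \eta\esp(X_0)\partial_i\overline u_1^\star)$ terms of~\eqref{expueta} up to the standard MsFEM interpolation error, classically of order $h + \eps/h$ (the $\eps/h$ being the boundary-layer effect of the oversampled basis). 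What is left uncaptured are exactly the two genuinely perturbative terms of $v_\eta^\eps$. The periodic $\psi_{e_i}$ term is of size $O(\eta)$ in the broken $H^1$ norm (after differentiation the $\eps$ prefactor cancels the $\eps^{-1}$), hence contributes $\eta$. For the random sum $\eta\eps\sum_i\sum_{k\in I_\eps}(X_k-\esp(X_0))\chi_{e_i}(\cdot/\eps - k)\partial_i u_0^\star$ I use that $(X_k-\esp(X_0))_k$ are i.i.d., mean zero and bounded: expanding the square element by element kills the off-diagonal terms, leaving $\var(X_0)$ times a diagonal sum whose summands are controlled through Lemma~\ref{lem:chi}. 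Propagating this per-element bound over the $N(h)$ elements, using $N_\cell\geq\alpha(h/\eps)^d$ together with the logarithmic factor $R_{d,\eps}$ of Lemma~\ref{lem:chi}, yields the $\eta(\eps/h)^{d/2}\ln(N(h))$ term; the gradient part of this same sum contributes only at order $\eta$.

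Finally, the consistency error is treated with Lemmas~\ref{Lem1}, \ref{Lem2} and~\ref{coro:lambda}. Integrating $\mathcal{A}^h_{\eps,\eta}(u_\eta^\eps,w_h)$ by parts on each element recasts the non-conformity as edge-flux terms of $A_\eta(\cdot/\eps)\nabla u_\eta^\eps$; substituting the two-scale expansion and invoking the homogenized equation, these lemmas bound the edge contributions, the stochastic part being once more a normalized i.i.d.\ sum (the quantity~\eqref{eq:lundi2}, via the structure~\eqref{struc:A1}) amenable to a Central-Limit variance estimate. Collecting the four groups of terms gives~\eqref{est-H1}. I expect the third paragraph to be the main obstacle: isolating the $\psi_{e_i}$ and $\chi_{e_i}$ pieces that lie \emph{outside} $\mathcal{W}_h$ precisely because the basis is generated from $A_0^\eps$ only, and pinning down the sharp $(\eps/h)^{d/2}\ln(N(h))$ scaling of the random corrector sum through a careful variance computation, is the genuinely new difficulty, absent from the deterministic theory.
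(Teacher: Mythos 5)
Your skeleton matches the paper's: Strang's Lemma~\ref{lem:Strang2}, the triangle inequality through the two-scale expansion $v_\eta^\eps$ of Theorem~\ref{exp2scaleueta}, a candidate $v_h=\Rop(\cdot)$ applied to an interpolant of the slowly varying part, and Lemmas~\ref{Lem1},~\ref{Lem2} and~\ref{coro:lambda} for the consistency error. But there is a genuine error in where you locate the term $\eta(\eps/h)^{d/2}\ln(N(h))$. You derive it from the best-approximation error, namely from the random sum $\eta\eps\sum_i\sum_{k}(X_k-\esp(X_0))\chi_{e_i}(\cdot/\eps-k)\partial_i u_0^\star$, by ``propagating the per-element bound over the $N(h)$ elements'' using $N_\cell\geq\alpha(h/\eps)^d$. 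That computation cannot produce this term: by Lemma~\ref{lem:chi} each summand has squared $L^2(\mathcal{D})$ norm at most $C\eps^d R_{d,\eps}$, the i.i.d.\ mean-zero structure kills the off-diagonal terms globally (no per-element maximum ever arises), and since $\text{Card}\,I_\eps\sim\eps^{-d}$ the $L^2$ contribution is $O(\eta\,\eps\sqrt{R_{d,\eps}})$ while the gradient part is $O(\eta)$ via $\nabla\chi_{e_i}\in (L^2(\RR^d))^d$ --- exactly the paper's bounds~\eqref{borneL2-1} and~\eqref{majL2E1}, with no $(\eps/h)^{d/2}$ scaling and no logarithm. In the paper the entire best-approximation error is only $C(\sqrt\eps+\eta+\eps/h+h)$, see~\eqref{eq3:theo-uS}.

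The term $\eta(\eps/h)^{d/2}\ln(N(h))$ comes entirely from the non-conforming error, through the random variable $\lambda(\omega,h,\eps)$ of Lemma~\ref{Lem1}: the maximum over all $N(h)$ elements (and all $1\leq m,p\leq d$) of the fluctuation of the element-averaged energy density of $A_1$. Your closing remark that this is ``amenable to a Central-Limit variance estimate'' underestimates the difficulty and hides the origin of the logarithm: a variance computation for a single element gives only $(\eps/h)^{d/2}$, whereas one must bound $\esp[\lambda^2]$ with $\lambda=\max_\cell\max_{m,p}|S^{m,p}_\cell|$ a maximum of $\sim h^{-d}$ normalized i.i.d.\ sums; a Chebyshev/union bound would lose polynomial factors in $N(h)$, and the paper's Lemma~\ref{coro:lambda} instead applies exponential Markov (Chernoff) bounds to $\pm\sqrt{N_\cell}\,S^\eps_\cell$ with the cutoff $c_h=2\ln(N(h))$ to extract the $[\ln(N(h))]^2$ factor. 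Two further points you pass over: applying Strang's lemma requires the uniform coercivity of $\mathcal{A}^h_{\eps,\eta}$ with respect to the broken norm $\|\cdot\|_\h$, which is not automatic for the oversampled (non-conforming) basis and is proved in the paper's Step~1 through an energy estimate on the oversampling domain $\mathbf{S}$ together with the norm equivalence between $\|\Rop(\widetilde v_h)\|_\h$ and $\|\widetilde v_h\|_{H^1(\mathcal{D})}$; and the paper's consistency estimate does not proceed by elementwise integration by parts and edge-flux terms, but by transporting the problem to $\mathcal{V}_h$ through $\Rop$ and comparing $\widetilde{\mathcal{A}}^h_{\eps,\eta}$ with the homogenized form $\mathcal{A}^\star_\eta$ in volume, using the exact expansion~\eqref{phiexp} of the basis functions.
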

The restriction to $d>1$ comes from the fact that the proof of this
result uses the rate of convergence on the two-scale expansion of
$u^\varepsilon_\eta$ that we stated in Theorem~\ref{exp2scaleueta}. This
rate of convergence is not optimal in dimension one, as can be seen from
the comparison of~\eqref{restueta} and~\eqref{restueta-1d-H1-Linfty}.
The one-dimensional version of the above result is stated in
Section~\ref{sec:1d} below (see Theorem~\ref{theo:H1-1d}), where we
briefly consider the one-dimensional situation.

\begin{remark}
\label{rem:optim}
In the case $\eta=0$, our approach reduces to the standard deterministic
MsFEM and we
obtain the same estimate as in the deterministic case with oversampling
(see e.g.~\cite[Theorem 3.1]{Efendiev2000}). 
\end{remark}

\subsection{Proof of Theorem~\ref{theo:H1}}
\label{sec:proof_main}

The proof of Theorem~\ref{theo:H1} is the direct consequence of three
lemmas. First we recall the second Strang's lemma~\cite[Theorem~4.2.2, p.~210]{Ciarlet}. 
\begin{lemme}
\label{lem:Strang2}
Consider a family of Hilbert spaces $\mathcal{W}_h$ with the norm $\|
\cdot \|_\h$, a family of continuous bilinear forms 
${\cal A}^h_{\varepsilon,\eta}$
on $\mathcal{W}_h$ that are uniformly 
$\mathcal{W}_h$-elliptic, and a continuous linear form $b$ on
$\mathcal{W}_h$. For any $h>0$, introduce $u_S$ solution to 
$$
\forall v_h \in \mathcal{W}_h, \quad 
{\cal A}^h_{\varepsilon,\eta}(u_S,v_h) = b(v_h)
$$
and $u_\eta^\varepsilon \in H^1_0(\mathcal{D})$ solution to 
$$
\forall v \in H^1_0(\mathcal{D}), \quad 
{\cal A}^h_{\varepsilon,\eta}(u^\varepsilon_\eta,v) = b(v).
$$
Then there exists a constant $C$ independent of $\eta$, $h$ and $\varepsilon$
such that
\begin{equation}
\|u^\varepsilon_\eta - u_S\|_\h
\leq C \left(\inf\limits_{v_h \in \mathcal{W}_h} 
\|u^\varepsilon_\eta - v_h\|_\h + \sup\limits_{w_h \in \mathcal{W}_h}
\frac{\left|{\cal A}^h_{\varepsilon,\eta}(u^\varepsilon_\eta,w_h)- b(w_h)\right|}{\|w_h\|_\h} 
\right). 
\label{Strang2}
\end{equation}
\end{lemme}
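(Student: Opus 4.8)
The plan is to reproduce the classical argument behind the second Strang lemma, taking care that every constant introduced is uniform in $\eta$, $h$ and $\varepsilon$. Write $u = u^\varepsilon_\eta$ and $u_h = u_S$, and let $\alpha>0$ denote the ellipticity constant supplied by the uniform $\mathcal{W}_h$-ellipticity hypothesis, and $M$ the continuity constant of ${\cal A}^h_{\varepsilon,\eta}$ on $\left(\mathcal{W}_h,\|\cdot\|_\h\right)$. The first step is to fix an arbitrary $v_h \in \mathcal{W}_h$ and split the error by the triangle inequality,
$$
\|u - u_h\|_\h \leq \|u - v_h\|_\h + \|v_h - u_h\|_\h ,
$$
so that the whole task reduces to controlling the purely discrete quantity $\|v_h - u_h\|_\h$, whose argument $w_h := v_h - u_h$ lies entirely in $\mathcal{W}_h$.

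The second step is to exploit ellipticity on $w_h$, writing $\alpha \|w_h\|_\h^2 \leq {\cal A}^h_{\varepsilon,\eta}(w_h,w_h)$, and then inserting the two defining relations. Since $u_h = u_S$ solves~\eqref{eq:var-mac}, one has ${\cal A}^h_{\varepsilon,\eta}(u_h,w_h)=b(w_h)$, whence
$$
\alpha \|w_h\|_\h^2 \leq {\cal A}^h_{\varepsilon,\eta}(v_h - u, w_h) + \Big[ {\cal A}^h_{\varepsilon,\eta}(u, w_h) - b(w_h) \Big].
$$
The first term is bounded by continuity by $M\,\|v_h - u\|_\h\,\|w_h\|_\h$, and the bracketed term is precisely the non-conforming (consistency) defect, which I would bound by $\|w_h\|_\h$ times the supremum appearing in~\eqref{Strang2}. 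Dividing through by $\|w_h\|_\h$ gives
$$
\|v_h - u_h\|_\h \leq \frac{M}{\alpha}\,\|v_h - u\|_\h + \frac{1}{\alpha}\,\sup_{w_h \in \mathcal{W}_h} \frac{\left| {\cal A}^h_{\varepsilon,\eta}(u, w_h) - b(w_h) \right|}{\|w_h\|_\h}.
$$
Reinserting this into the triangle inequality, taking the infimum over $v_h \in \mathcal{W}_h$, and collecting the factors into a single constant $C = \max\!\left(1 + M/\alpha,\, 1/\alpha\right)$ then yields~\eqref{Strang2}.

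The argument itself is routine; the only genuine point requiring attention, and the one I would emphasize, is the \emph{uniformity} of the constant $C$ in $\eta$, $h$ and $\varepsilon$. The ellipticity constant $\alpha$ is uniform by the standing hypothesis of uniform $\mathcal{W}_h$-ellipticity, and the continuity constant $M$ is likewise uniform: because ${\cal A}^h_{\varepsilon,\eta}$ is a sum of element-wise integrals against $A_\eta(\cdot/\varepsilon,\omega)$, the boundedness bound in~\eqref{eq:unif_eta} gives, via Cauchy--Schwarz over the elements, $M \leq a_+$ independently of all three parameters. Consequently $C$ depends only on $a_-$ and $a_+$, which is exactly the assertion of the lemma.
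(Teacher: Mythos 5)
Your proof is correct and is precisely the classical argument behind the second Strang lemma; the paper does not reprove this statement but simply cites Ciarlet (Theorem 4.2.2), whose proof is the same splitting $\|u-u_h\|_\h \leq \|u-v_h\|_\h + \|v_h-u_h\|_\h$ followed by ellipticity, the discrete equation, and the consistency defect. Your closing remark on the uniformity of $M \leq a_+$ via the bound in~\eqref{eq:unif_eta} correctly addresses the only point where the lemma's conclusion ("$C$ independent of $\eta$, $h$, $\varepsilon$") requires more than bare continuity of each individual form.
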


\medskip

The first term in the right hand side of~\eqref{Strang2} is the
so-called best approximation error. The main part (step 2) of the proof of
Theorem~\ref{theo:H1} is devoted to its estimation, following up on the
estimate~\eqref{restueta} provided by Theorem~\ref{exp2scaleueta}. 

The second term in the right hand side of~\eqref{Strang2} is the
so-called nonconforming error, which vanishes in the case $\mathcal{W}_h
\subset H^1_0(\mathcal{D})$ (the method is then conforming, and we are
left with the standard C\'ea lemma). In our case, we use the
oversampling technique, hence our approximation is not conforming, and
this second term does not vanish. It will be estimated in the step 3 of
the proof of Theorem~\ref{theo:H1}, using the
following two results, which are proved in
Appendix~\ref{sec:tech-proof}.

\begin{lemme}
\label{Lem1}
Consider the two bilinear forms ${\cal A}^\star_\eta$ and $\widetilde{\cal
  A}^h_{\varepsilon,\eta}$ respectively defined in 
(\ref{def:homo-bili}) and (\ref{def:stoch-bili-tilde}). 
Under assumption~\eqref{hyp:a_holder}, there 
exists a deterministic constant $C$, independent of $\eta$, $\varepsilon$ and
$h$, such that, for any $v_h \in \mathcal{V}_h$, 
\begin{equation}
\sup\limits_{w_h \in \mathcal{V}_h} 
\frac{\left|\widetilde{\cal A}^h_{\varepsilon,\eta}(v_h,w_h)- {\cal
    A}^\star_\eta(v_h,w_h)\right|}{\|w_h\|_{H^1(\mathcal{D})}} \leq C
\left(\frac{\varepsilon}{h}+\eta \lambda(\omega,h,\varepsilon) + \eta^2
  \mathcal{C}(\eta)\right) \|v_h\|_{H^1(\mathcal{D})} \quad \text{a.s.}, 
\label{lemme1}
 \end{equation}
where $\mathcal{C}$ is a deterministic function independent of $\eps$
and $h$ and bounded when $\eta
\rightarrow 0$, and $\lambda$ is defined by
\begin{equation}
\lambda(\omega,h,\varepsilon)=\max\limits_{\cell} \max\limits_{1\leq p,m
  \leq d}\left|\frac{1}{|I^\varepsilon_\cell|}
  \int_{I^\varepsilon_\cell}\left[e_p + \nabla
    \wper_{e_p}\left(\frac{x}{\varepsilon}\right)\right]^T\left(
    A_1\left(\frac{x}{\varepsilon},\omega\right)-\esp\left(A_1\left(\frac{x}{\varepsilon},\cdot\right)\right)\right)\left[e_m + \nabla \wper_{e_m}\left(\frac{x}{\varepsilon}\right)\right] \, dx\right|,
\label{def:sigN}
\end{equation}
where $I^\varepsilon_\cell$ is the largest domain composed of cells of
size $\varepsilon$ included in $\cell$: 
$$
I^\varepsilon_\cell=
\bigcup\limits_{Q_i^\varepsilon \subset \cell} Q_i^\varepsilon,
\quad
Q_i^\varepsilon=\varepsilon (i + Q), \ i \in \ZZ^d.
$$
\end{lemme}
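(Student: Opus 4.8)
The plan is to exploit that any $v_h, w_h \in \mathcal{V}_h$ are affine on each element, so that $\nabla v_h$ and $\nabla w_h$ are constant on every $\cell$; write $\xi_\cell = \nabla v_h|_\cell$ and $\zeta_\cell = \nabla w_h|_\cell$. Starting from the elementwise decomposition of $\widetilde{\mathcal{A}}^h_{\varepsilon,\eta}(v_h,w_h) - \mathcal{A}^\star_\eta(v_h,w_h)$, the first step is to replace the reconstructed gradient $\nabla(\Rop_\cell v_h)$ by the corrector-modified field $(\text{Id} + \nabla\wper(\cdot/\eps))\,\xi_\cell$, where $\nabla\wper = [\nabla\wper_{e_1},\dots,\nabla\wper_{e_d}]$ collects the periodic correctors of~\eqref{PB:w0}. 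Indeed, by construction $\Rop_\cell v_h$ is (up to the oversampling boundary data) the $A_{per}(\cdot/\eps)$-harmonic function whose affine part has slope $\xi_\cell$, and its two-scale expansion is $\xi_\cell\cdot x + \eps\,\wper_{\xi_\cell}(x/\eps)$. The mismatch $\nabla(\Rop_\cell v_h) - (\text{Id}+\nabla\wper(\cdot/\eps))\xi_\cell$ is the usual MsFEM boundary-layer (resonance) term; using the H\"older regularity~\eqref{hyp:a_holder} of $A_{per}$ and the attendant corrector estimates, its contribution to the bilinear form is of order $\varepsilon/h$, exactly as in the deterministic analysis of~\cite{Efendiev2000} (here $A_1$ is merely bounded and $|\eta|\leq1$, so the coupling of this term with the stochastic part stays $O(\varepsilon/h)$).

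Once this substitution is made, the leading term on each $\cell$ is $\zeta_\cell^T\big(\int_\cell M_\eta(x/\eps)\,dx\big)\xi_\cell$ with $M_\eta(y) = (\text{Id}+\nabla\wper(y))^T A_\eta(y)(\text{Id}+\nabla\wper(y))$. Splitting $A_\eta = A_{per} + \eta A_1$ and then $A_1 = \esp(A_1) + (A_1 - \esp(A_1))$ produces three pieces. Over the complete cells $I^\varepsilon_\cell$, periodicity gives the exact identities $\int_Q(\text{Id}+\nabla\wper)^T A_{per}(\text{Id}+\nabla\wper) = A^\star_{per}$ (this is~\eqref{def:A0star}) and $\int_Q(\text{Id}+\nabla\wper)^T\esp(A_1)(\text{Id}+\nabla\wper) = A_1^\star$ (using~\eqref{struc:A1} and~\eqref{def:overlineB}, together with $\esp(A_1) = \esp(X_0)B_{per}$), so these two pieces assemble into $|\cell|\,\zeta_\cell^T(A^\star_{per} + \eta A_1^\star)\xi_\cell$; the incomplete cells $\cell\setminus I^\varepsilon_\cell$ have relative volume $O(\varepsilon/h)$ and are absorbed in the $\varepsilon/h$ term. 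The fluctuation piece $\eta\int_{I^\varepsilon_\cell}(\text{Id}+\nabla\wper)^T(A_1 - \esp(A_1))(\text{Id}+\nabla\wper)$ is, by the very definition~\eqref{def:sigN} of $\lambda$, bounded by $C\,\eta\,|\cell|\,\lambda(\omega,h,\varepsilon)\,|\xi_\cell|\,|\zeta_\cell|$ after expanding $\xi_\cell$ and $\zeta_\cell$ in the canonical basis.

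It then remains to compare $A^\star_{per} + \eta A_1^\star$ with the true homogenized matrix $A^\star_\eta$; by the expansion~\eqref{exp:Astareta}, the difference is exactly $\eta^2 A^\star_2(\eta)$, which yields the contribution $\eta^2\mathcal{C}(\eta)$ with $\mathcal{C}(\eta) = |A^\star_2(\eta)|$, bounded as $\eta\to0$. Summing the three types of contributions over the elements, using $\sum_\cell|\cell|\,|\xi_\cell|\,|\zeta_\cell| \leq \|v_h\|_{H^1(\mathcal{D})}\|w_h\|_{H^1(\mathcal{D})}$ by Cauchy--Schwarz, and dividing by $\|w_h\|_{H^1(\mathcal{D})}$ gives~\eqref{lemme1}. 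I expect the main obstacle to be the first step: controlling the boundary-layer discrepancy between $\nabla(\Rop_\cell v_h)$ and $(\text{Id}+\nabla\wper(\cdot/\eps))\xi_\cell$ at the oversampled element boundaries, and certifying the $\varepsilon/h$ rate. This is precisely where the regularity hypothesis~\eqref{hyp:a_holder} and the fine deterministic MsFEM estimates are genuinely needed, whereas the stochastic fluctuation $\lambda$ and the $\eta^2$ homogenization remainder follow rather cleanly from the weakly stochastic structure.
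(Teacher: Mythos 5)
Your proposal is correct and follows essentially the same route as the paper's proof: the paper makes your first step exact via the identity~\eqref{phiexp}, the mismatch being precisely $-\varepsilon\sum_j \nabla\xi_\varepsilon^j\,\partial_j v_h$, controlled by the Avellaneda--Lin-type bound $\|\nabla\xi_\varepsilon^j\|_{L^\infty(\cell)}\leq C/h$ of Lemma~\ref{LemChen} (which is where~\eqref{hyp:a_holder} enters, as you anticipated). The remaining steps coincide with the paper's: the bulk/boundary cell splitting over $I^\varepsilon_\cell$, cancellation of the periodic leading term against $A^\star_{per}$, the fluctuation term bounded by $\eta\lambda(\omega,h,\varepsilon)$, the $\eta^2\mathcal{C}(\eta)$ remainder from~\eqref{exp:Astareta}, and the elementwise Cauchy--Schwarz assembly.
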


\begin{lemme}
\label{Lem2}
Consider the two linear forms $b$ and $\widetilde{b}_h$ respectively
defined in (\ref{def:homo-bili}) and (\ref{def:stoch-bili-tilde}). 
Under assumption~\eqref{hyp:a_holder}, there 
exists a deterministic constant $C$ independent of $\eta$, $\varepsilon$
and $h$ such that 
\begin{equation}
\sup\limits_{w_h \in \mathcal{V}_h}
\frac{\left|\widetilde{b}_h(w_h)-b(w_h)\right|}{\|w_h\|_{H^1(\mathcal{D})}} \leq  
C \varepsilon \|f\|_{L^2(\mathcal{D})}.
\label{lemme2}
\end{equation}
\end{lemme}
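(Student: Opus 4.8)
The plan is to localize the left-hand side of~\eqref{lemme2} element by element. Since $\widetilde b_h(w_h)=\sum_{\cell}\int_\cell f\,\Rop_\cell(w_h)$ and $b(w_h)=\sum_{\cell}\int_\cell f\,w_h$ by the definitions~\eqref{def:stoch-bili-tilde} and~\eqref{def:homo-bili}, we have
$$
\widetilde b_h(w_h)-b(w_h)=\sum_{\cell\in\mathcal T_h}\int_\cell f(x)\left[\Rop_\cell(w_h)(x)-w_h(x)\right]dx .
$$
Applying the Cauchy--Schwarz inequality on each element, then the discrete Cauchy--Schwarz inequality over the elements, and using that $w_h\in\mathcal V_h\subset H^1(\mathcal D)$ so that $\sum_\cell\|w_h\|_{H^1(\cell)}^2=\|w_h\|_{H^1(\mathcal D)}^2$, the claim~\eqref{lemme2} reduces to the purely local estimate
$$
\forall\,\cell,\quad \|\Rop_\cell(w_h)-w_h\|_{L^2(\cell)}\le C\,\varepsilon\,\|w_h\|_{H^1(\cell)},
$$
with $C$ independent of $\cell$, $\eps$ and $h$: indeed this yields $\left|\widetilde b_h(w_h)-b(w_h)\right|\le C\eps\,\|f\|_{L^2(\mathcal D)}\,\|w_h\|_{H^1(\mathcal D)}$.

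To establish this local estimate I would unfold the definition~\eqref{eq:R_local}--\eqref{eq:R_global} of $\Rop_\cell$ in terms of the oversampling construction. Writing $w_h|_\cell=\sum_j c_j\,\chi_j^{0,\mathbf S}$ by~\eqref{eq:MsFEM-Over-nb-3}, the affine extension $W:=\sum_j c_j\,\chi_j^{0,\mathbf S}$ of $w_h|_\cell$ to the oversampling domain $\mathbf S$ is affine on all of $\mathbf S$, while $W^\eps:=\sum_j c_j\,\chi_j^{\eps,\mathbf S}$ coincides with $\Rop_\cell(w_h)$ on $\cell$ and solves, by~\eqref{PB:MsFEM-Over-nb} with the deterministic coefficient $A_{per}(\cdot/\eps)$ used to build the basis,
$$
-\mathrm{div}\left[A_{per}\!\left(\tfrac{\cdot}{\eps}\right)\nabla W^\eps\right]=0\ \text{ in }\mathbf S,\qquad W^\eps=W\ \text{ on }\partial\mathbf S .
$$
Since $W$ is affine and the homogenized matrix $A^\star_{per}$ of~\eqref{def:A0star} is constant, the homogenized limit of $W^\eps$ is exactly $W$. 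Thus the local estimate is precisely an interior $L^2$ bound on the homogenization error $W^\eps-W$ over $\cell$, which is strictly contained in $\mathbf S$ because the oversampling homothety has ratio strictly larger than $1$.

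For this bound I would invoke the two-scale expansion $W^\eps=W+\eps\sum_k \wper_{e_k}(\cdot/\eps)\,\partial_k W+R^\eps$, where $\wper_{e_k}$ solves the periodic corrector problem~\eqref{PB:w0}. As $\nabla W$ is constant on $\mathbf S$ and $\wper$ is bounded, the corrector term is $O(\eps\,|\nabla W|)$ in $L^2$; the remainder $R^\eps$ is of the same order on the interior subdomain $\cell\subset\subset\mathbf S$, and this is the step where the H\"older continuity assumption~\eqref{hyp:a_holder} on $A_{per}$ enters, exactly as in the classical interior homogenization estimates underlying the MsFEM analyses of~\cite{Efendiev2000,hou1999}. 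Rescaling $\mathbf S$ to a reference configuration of unit size via $x=h\hat x$ turns this into a standard periodic homogenization problem with small parameter $\eps/h$; because the constants in the periodic interior estimates depend only on $A_{per}$ (ellipticity and H\"older modulus) and on the fixed oversampling ratio, and not on $\eps/h$, the resulting constant is uniform in $\eps$ and $h$ (we use here $\eps\le h$ and the mesh-regularity assumption). Recollecting, $\|W^\eps-W\|_{L^2(\cell)}\le C\eps\,|\nabla W|\,|\cell|^{1/2}=C\eps\,\|\nabla w_h\|_{L^2(\cell)}\le C\eps\,\|w_h\|_{H^1(\cell)}$, which is the desired bound. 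The main obstacle is precisely obtaining the clean $O(\eps)$ factor in $L^2$ rather than the $O(\sqrt\eps)$ rate of the global $H^1$ two-scale estimate: the boundary layer of $W^\eps$ near $\partial\mathbf S$ would spoil a global bound, and it is the oversampling (which keeps $\cell$ away from $\partial\mathbf S$) together with~\eqref{hyp:a_holder} that allows the interior estimate to retain the full power of $\eps$.
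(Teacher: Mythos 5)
Your proof is correct, and its overall skeleton (localize over elements, reduce to the per-element bound $\|\Rop_\cell(w_h)-w_h\|_{L^2(\cell)}\le C\varepsilon\|w_h\|_{H^1(\cell)}$) matches the paper's; but you establish that local bound by a genuinely different and heavier route. The paper invokes no homogenization convergence rate at all: since $\nabla w_h$ is constant on $\cell$, the two-scale expansion of the basis functions is \emph{exact} (see~\eqref{phiexp}), namely
$$
\Rop_\cell(w_h)-w_h=\varepsilon\sum_{p=1}^d\left(\wper_{e_p}\left(\frac{\cdot}{\varepsilon}\right)-\xi^p_\varepsilon\right)\partial_p w_h \quad\text{on }\cell,
$$
so the remainder $R^\eps$ of your expansion is \emph{exactly} $-\varepsilon\sum_p\xi^p_\varepsilon\,\partial_p w_h$, and the maximum principle applied to~\eqref{PB:xip} gives $\|\xi^p_\varepsilon\|_{L^\infty(\mathbf S)}\le\|\wper_{e_p}\|_{L^\infty(\RR^d)}\le C$; this is where~\eqref{hyp:a_holder} enters (continuity and boundedness of $\wper_{e_p}$ and $\xi^p_\varepsilon$), not through interior regularity theory. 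Consequently the $O(\varepsilon)$ bound in $L^2$ holds on \emph{all} of $\mathbf S$, uniformly in $h$ and $\varepsilon$: your concluding claim that the boundary layer would spoil a global bound and that the oversampling separation $\cell\subset\subset\mathbf S$ is what rescues the full power of $\varepsilon$ is inaccurate for this lemma, because the layer function $\xi^p_\varepsilon$ is uniformly bounded in $L^\infty$ and only \emph{gradient} estimates are degraded near $\partial\mathbf S$ (that is where oversampling genuinely matters, via Lemma~\ref{LemChen} in the proof of Lemma~\ref{Lem1}). This misstatement does not invalidate your argument, since you only use the estimate on the interior set $\cell$, and your appeal to Avellaneda--Lin-type interior estimates together with your rescaling $x=h\hat x$ (constants depending only on the ellipticity and H\"older modulus of $A_{per}$ and the fixed oversampling ratio) can be made rigorous. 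What each approach buys: yours is more robust in spirit (it would survive situations where the exact identity is unavailable, e.g. non-affine coarse functions), at the price of invoking compactness-method machinery and the extra hypothesis $\varepsilon\le h$; the paper's argument gets the same conclusion by purely elementary means (exact identity plus maximum principle), with no restriction between $\varepsilon$ and $h$.
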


Before turning to the proof of Theorem~\ref{theo:H1}, we first give some
properties of the random variable $\lambda(\omega,h,\eps)$ that appears
in the right hand side of~\eqref{lemme1}, and we next detail a two
scale expansion of the highly oscillatory basis functions 
$\phi^{\varepsilon}_i$, which will be useful in the sequel. 

\begin{remark}
\label{rem:coerc}
We will show in Lemma~\ref{coro:lambda} below that $\lambda$ defined
in~\eqref{def:sigN} is 
uniformly bounded with respect to $h$, $\eps$ and $\omega$.
Since ${\cal A}^\star_\eta$ is coercive, we deduce from~\eqref{lemme1} that
$\widetilde{\cal A}^h_{\varepsilon,\eta}$ is also coercive, in the sense that
there exists a 
deterministic constant $\alpha>0$, independent of $h$, $\varepsilon$ and
$\eta$, such that 
$$
\forall v_h \in \mathcal{V}_h, \quad 
\alpha \|v_h\|^2_{H^1(\mathcal{D})} \leq 
\widetilde{\cal A}^h_{\varepsilon,\eta}(v_h,v_h).
$$
\end{remark}

\subsubsection{Properties of $\lambda(\omega,h,\eps)$}

We state here some useful properties of the random variable
$\lambda(\omega,h,\eps)$ that appears in~\eqref{lemme1}. They will be
proved in Appendix~\ref{sec:tech-proof}. 
As mentioned above, we recall that the assumption $N_\cell \geq \alpha \left(
  h/\varepsilon \right)^d$ that we make below is a regularity assumption
on the macroscopic mesh (the volume of each element $\cell$ is bounded from
below by $\alpha h^d$). 

\begin{lemme}
\label{coro:lambda}
Let $\lambda(\omega,h,\eps)$ be defined by~\eqref{def:sigN}. Then,
there exists a deterministic constant $C$ such that, for any $h$ and
$\eps$, we have $0 \leq \lambda(\omega,h,\varepsilon) \leq C$ almost
surely. 

Assume now that the random matrix $A_1$ 
satisfies~\eqref{struc:A1}, where the law of $X_k(\omega)$ is absolutely
continuous with respect to the Lebesgue measure. Assume furthermore that
the number 
$\displaystyle N_\cell=\hbox{Card}(i;Q_i^\varepsilon \subset \cell)$ of
cells in $\cell$ satisfies $\displaystyle N_\cell \geq \alpha \left(
\frac{h}{\varepsilon} \right)^d$ for some $\alpha>0$ independent of the element
$\cell$, $h$ and $\varepsilon$. Then
\begin{equation}
\label{eq:resu4a}
\esp(\lambda(\cdot,h,\varepsilon)^2)\leq C
\frac{\varepsilon^d}{h^d}\left[\ln(N(h))\right]^2,
\end{equation}
where, we recall, $N(h)$ is the number of elements $\cell$ in the domain
${\cal D}$ (which is of order $h^{-d}$ in dimension $d$) and 
$C$ is a deterministic constant independent of $h$ and $\varepsilon$.
\end{lemme}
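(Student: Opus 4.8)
The plan is to prove the two assertions separately: the almost-sure bound is a routine $L^\infty$ estimate, while the $L^2$ estimate~\eqref{eq:resu4a} carries the genuine content and rests on recognizing a normalized sum of i.i.d.\ variables.

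For the bound $0 \le \lambda(\omega,h,\varepsilon)\le C$, I would argue that the integrand in~\eqref{def:sigN} is bounded uniformly in $x$, $\omega$ and $\varepsilon$, \emph{without} yet invoking~\eqref{struc:A1}. The matrix $A_1$ is bounded a.e.\ and a.s.\ by assumption, hence so is $A_1(\cdot/\varepsilon,\omega)-\esp(A_1(\cdot/\varepsilon,\cdot))$. Under the H\"older regularity~\eqref{hyp:a_holder} of $A_{per}$, classical elliptic (Schauder) regularity applied to the corrector problem~\eqref{PB:w0} gives $\nabla \wper_{e_p}\in\left(L^\infty(\RR^d)\right)^d$, so that $e_p+\nabla\wper_{e_p}(\cdot/\varepsilon)$ is bounded in $L^\infty$ uniformly in $\varepsilon$. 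The integrand is therefore bounded pointwise by a deterministic constant, and since $\lambda$ is the maximum, over the finitely many elements $\cell$ and the finitely many pairs $(p,m)$, of the \emph{average} of this integrand over $I^\varepsilon_\cell$, the bound $\lambda\le C$ follows; the lower bound $\lambda\ge 0$ is immediate.

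The heart of the second assertion is an exact simplification allowed by~\eqref{struc:A1}. Inserting $A_1(x,\omega)=\sum_k \mathbf{1}_{Q+k}(x)X_k(\omega)B_{per}(x)$ and using $\esp(A_1(\cdot/\varepsilon,\cdot))=\esp(X_0)\,B_{per}(\cdot/\varepsilon)$ (the $X_k$ being i.i.d.), the difference $A_1-\esp(A_1)$ at $x/\varepsilon$ localizes, on each cell $Q_k^\varepsilon\subset\cell$, to $(X_k(\omega)-\esp(X_0))\,B_{per}(x/\varepsilon)$. Splitting $\int_{I^\varepsilon_\cell}$ over these cells and rescaling $y=x/\varepsilon-k$, the $Q$-periodicity of both $\wper_{e_p}$ and $B_{per}$ makes each cell integral equal to $\varepsilon^d\,\overline{B}_{pm}$, with $\overline{B}$ exactly the matrix~\eqref{def:overlineB}. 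Since $|I^\varepsilon_\cell|=N_\cell\,\varepsilon^d$, this collapses~\eqref{def:sigN} to
\[
\lambda(\omega,h,\varepsilon)=\Big(\max_{1\le p,m\le d}|\overline{B}_{pm}|\Big)\ \max_\cell\left|\frac{1}{N_\cell}\sum_{Q_k^\varepsilon\subset\cell}\big(X_k(\omega)-\esp(X_0)\big)\right|,
\]
so $\lambda$ is, up to a fixed deterministic prefactor, the maximum over the coarse elements of a normalized sum of centered i.i.d.\ variables.

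From here I would first treat a single sum: writing $S_\cell=N_\cell^{-1}\sum_{Q_k^\varepsilon\subset\cell}(X_k-\esp(X_0))$, independence and centering give $\esp(S_\cell^2)=\var(X_0)/N_\cell\le C(\varepsilon/h)^d$ by the mesh-regularity hypothesis $N_\cell\ge\alpha(h/\varepsilon)^d$. The main obstacle is the passage from this per-element variance to a bound on $\esp\big[\max_\cell S_\cell^2\big]$ over the $N(h)$ elements, which is exactly where the logarithmic factor in~\eqref{eq:resu4a} is born. For this I would exploit the boundedness of the $X_k$: a Hoeffding (Gaussian-type) tail estimate gives $\PP(|S_\cell|\ge t)\le 2\exp(-c\,N_\cell\,t^2)\le 2\exp\!\big(-c\alpha(h/\varepsilon)^d t^2\big)$, and a union bound over the $N(h)$ elements followed by integration of the tail of $\max_\cell S_\cell^2$ produces $\esp[\max_\cell S_\cell^2]\le C(\varepsilon/h)^d\ln(N(h))$, which in particular yields~\eqref{eq:resu4a}. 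The i.i.d.\ structure and the absolute continuity (hence boundedness and nondegeneracy) of the law of $X_k$ are precisely what license these Central-Limit-type concentration arguments; the delicate point is controlling the maximum \emph{uniformly over all elements} $\cell$, rather than over a single one.
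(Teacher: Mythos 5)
Your proof is correct, and its central reduction is exactly the paper's: using the specific form~\eqref{struc:A1} together with the $Q$-periodicity of $\wper_{e_p}$ and $B_{per}$, each averaged integral in~\eqref{def:sigN} collapses exactly to $\overline{B}_{pm}$ (the matrix~\eqref{def:overlineB}) times a normalized sum of centered i.i.d.\ variables over the cells of $\cell$ — this is precisely~\eqref{eq:lundi2} in the paper, with $\tau^{m,p}=\sqrt{\var(X_0)}\,\overline{B}_{pm}$. Where you genuinely diverge is in passing from the per-element sums to the maximum over the $N(h)$ elements. The paper sets $c_h=2\ln(N(h))$, represents $\esp\left[N_\cell\max_\cell|S_\cell|^2\right]$ as an integral against the density of $\sqrt{N_\cell}\,|S_\cell|$ (this is where the absolute-continuity assumption on the law of $X_k$ is consumed), uses independence of the per-element sums to write the distribution of the maximum as $F_{N_\cell}^{N(h)}$, bounds the portion below $c_h$ by $c_h^2$ after integration by parts, and kills the tail above $c_h$ via Cauchy--Schwarz with a fourth-moment bound and an exponential Markov (Chernoff) inequality at $t=1$. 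You instead apply Hoeffding's inequality to the bounded centered $X_k$, take a union bound over the elements, and integrate the tail of $\max_\cell S_\cell^2$. Your route is simpler and buys three things: it needs neither the absolute continuity of the law of $X_k$ nor the tacit assumption behind the paper's product formula $F_{N_\cell}^{N(h)}$ (independence is automatic since distinct elements involve disjoint cells, but identical distribution requires the same $N_\cell$ in every element, which a union bound does not); and it yields the sharper estimate $\esp\left[\max_\cell S_\cell^2\right]\le C(\eps/h)^d\ln(N(h))$ with a single logarithm, which indeed implies~\eqref{eq:resu4a}. Two small corrections: absolute continuity of the law of $X_k$ implies neither boundedness nor nondegeneracy — the boundedness you feed into Hoeffding is the separate assumption $|X_k|\le C$ a.s.\ contained in~\eqref{struc:A1}; and for the almost-sure bound on $\lambda$ you invoke $\nabla\wper_{e_p}\in L^\infty(\RR^d)$ via the H\"older regularity~\eqref{hyp:a_holder}, which is legitimate in the paper's standing setting but unnecessary — the paper obtains the same bound using only $\wper_{e_p}\in H^1(Q)$, Cauchy--Schwarz and periodicity, so the first assertion requires no regularity of $A_{per}$ at all.
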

Because of the specific form~(\ref{struc:A1}) of $A_1$, we will see in
the proof of that result (see Appendix~\ref{sec:tech-proof} below) that
\begin{equation}
\label{titi}
\lambda(\omega,h,\varepsilon)
=
\max\limits_\cell \max\limits_{1 \leq m,p \leq d} \left| S^{m,p}_\cell \right|,
\end{equation}
where each random variable $S^{m,p}_\cell$ is a normalized sum of
$(h/\eps)^d$ i.i.d. variables. Applying the Central Limit Theorem, we
hence know that 
$S^{m,p}_\cell$ converges, when $\eps \to 0$, to a Gaussian random
variable (up to an appropriate renormalization). Likewise, computing
the expectation of $\left( S^{m,p}_\cell \right)^2$ is not
difficult. However, in the above 
lemma, the difficulty stems from the fact that
$\lambda(\omega,h,\varepsilon)$ is the maximum 
of {\em many} such random variables $S^{m,p}_\cell$ (in~\eqref{titi},
the number of elements $\cell$ is indeed of the order of $h^{-d}$). Our
main task is 
hence to control how  $\esp(\lambda(\cdot,h,\varepsilon)^2)$ depends on
$h$. See also Remark~\ref{rem:mittal}. 

\subsubsection{Two-scale expansion of the highly oscillatory basis
  functions}

Following~\cite{Efendiev2000}, we recall here an expansion of
$\phi_i^{\varepsilon,\cell}$ that will be useful in the sequel. 
By definition (see (\ref{eq:MsFEM-Over-nb-1}) and
(\ref{eq:MsFEM-Over-nb-2})), we have, for any $1 \leq i \leq d+1$, 
\begin{equation}
\label{rocq1}
\phi_i^{\varepsilon,\cell} = \sum\limits_{j=1}^{d+1} \alpha_{ij}
\left. \chi^{\varepsilon,\mathbf{S}}_j \right|_\cell, 
\end{equation}
where $\alpha_{ij}$ is such that 
\begin{equation}
\label{rocq2}
\phi_i^{0,\cell} = \sum\limits_{j=1}^{d+1}
\alpha_{ij} \left. \chi^{0,\mathbf{S}}_j \right|_\cell.
\end{equation}
We thus first turn to $\chi^{\varepsilon,\mathbf{S}}_i$, which, by
definition (see~\eqref{PB:MsFEM-Over-nb}), is the unique solution to 
\begin{equation}
-\mbox{div}\left[A_{per}\left(\frac{x}{\varepsilon}\right)\nabla
  \chi^{\varepsilon,\mathbf{S}}_i(x) \right] = 0 \ \ \mbox{in $\mathbf{S}$}, 
\quad
\chi^{\varepsilon,\mathbf{S}}_i(x)=\chi^{0,\mathbf{S}}_i(x) \ \ 
\mbox{on $\partial \mathbf{S}$}. 
\label{PB:MsFEM-nb-3} 
\end{equation}
We introduce the function 
\begin{equation}
\theta_i^{\varepsilon,\mathbf{S}}(x)= \varepsilon^{-1}\left( \chi^{0,\mathbf{S}}_i(x) +
\varepsilon \sum\limits_{j=1}^d \wper_{e_j}
\left(\frac{x}{\varepsilon}\right) \partial_j \chi^{0,\mathbf{S}}_i(x) -\chi^{\varepsilon,\mathbf{S}}_i(x)\right),
\label{dev-phi-eps}
\end{equation}
where $\wper_{e_i}$ is solution to the periodic corrector problem (\ref{PB:w0}). By construction,
using~\eqref{dev-phi-eps},~\eqref{PB:w0}, \eqref{PB:MsFEM-nb-3} and the
fact that $\nabla \chi^{0,\mathbf{S}}_i$ is constant on $\mathbf{S}$, we
have
\begin{eqnarray*}
-\mbox{div}\left[A_{per}\left(\frac{x}{\varepsilon}\right)\nabla
  \theta^{\varepsilon,\mathbf{S}}_i(x) \right] 
&=& 
\frac{1}{\varepsilon}\mbox{div}\left[A_{per}\left(\frac{x}{\varepsilon}\right)\nabla
  \left(\chi^{\varepsilon,\mathbf{S}}_i(x) - \chi^{0,\mathbf{S}}_i(x) -
    \varepsilon \sum\limits_{j=1}^d \wper_{e_j}
    \left(\frac{x}{\varepsilon}\right) \partial_j \chi^{0,\mathbf{S}}_i
  \right)\right] 
\\
&=& 
\frac{1}{\varepsilon}\mbox{div}\left[A_{per}\left(\frac{x}{\varepsilon}\right)\nabla
  \chi^{\varepsilon,\mathbf{S}}_i(x)\right] -
\frac{1}{\varepsilon}\sum\limits_{j=1}^d \partial_j
\chi^{0,\mathbf{S}}_i
\mbox{div}\left[A_{per}\left(\frac{x}{\varepsilon}\right)\left(e_j +
    \nabla \wper_{e_j} \left(\frac{x}{\varepsilon}\right)\right)\right] 
\\
&=&0,
\end{eqnarray*}
while, from~\eqref{dev-phi-eps}, $\dps \theta_i^{\varepsilon,\mathbf{S}}(x)=\sum\limits_{j=1}^d
\partial_j \chi^{0,\mathbf{S}}_i(x) \wper_{e_j} \left(\frac{x}{\varepsilon}\right) $ on $\partial \mathbf{S}$. So, by linearity, we obtain  
\begin{equation}
\theta_i^{\varepsilon,\mathbf{S}}(x)= \sum\limits_{j=1}^d \partial_j
\chi^{0,\mathbf{S}}_i \ \xi_\varepsilon^j(x), 
\label{eq:thetaxi}
\end{equation}
where $\xi_\varepsilon^j \in H^1(\mathbf{S})$ is the unique solution to
\begin{equation}
-\mbox{div}\left[A_{per}\left(\frac{x}{\varepsilon}\right)\nabla
  \xi_\varepsilon^j(x) \right] = 0 \ \ \mbox{in $\mathbf{S}$}, 
\quad
\xi_\varepsilon^j(x) = \wper_{e_j}
\left(\frac{x}{\varepsilon}\right) \ \ \mbox{on $\partial \mathbf{S}$}. 
\label{PB:xip} 
\end{equation}

Using~\eqref{dev-phi-eps}, we now obtain a useful relation between
$\phi_i^{\varepsilon,\cell}$ and $\phi_i^{0,\cell}$. Indeed,
collecting~\eqref{rocq1},~\eqref{rocq2},~\eqref{dev-phi-eps}
and~\eqref{eq:thetaxi}, we obtain the {\em exact} expression
\begin{equation}
\phi^{\varepsilon,\cell}_i(x) 
= 
\phi^{0,\cell}_i(x) + \varepsilon \sum\limits_{j=1}^d
\left(\wper_{e_j}\left(\frac{x}{\varepsilon}\right) -
 \left. \xi_\varepsilon^j(x) \right|_\cell \right) \partial_j \phi^{0,\cell}_i.
\label{phiexp}
\end{equation}
Recall now that $\phi^{\varepsilon,\cell}_i(x) = 
\Rop_\cell(\phi^{0,\cell}_i)$, by definition of the local operator
$\Rop_\cell$ (see~\eqref{eq:R_local}). Correspondingly,
the global operator $\Rop$, defined on $\mathcal{V}_h$
by~\eqref{eq:R_global}, equivalently writes
\begin{equation}
\label{eq:R_global2}
\forall u \in \mathcal{V}_h,
\quad
\Rop(u) = u + \varepsilon
\sum_{j=1}^d \left( \wper_{e_j} \left(\frac{\cdot}{\varepsilon}\right) -
  \overline{\xi_\varepsilon^j} \right) \partial_j u,
\end{equation}
where $\overline{\xi_\varepsilon^j}$ is locally defined on each element
$\cell$ by 
$\left. \overline{\xi_\varepsilon^j} \right|_{\cell} = 
\left. \xi_\varepsilon^j \right|_\cell$. By construction, for each $\cell$,
$\overline{\xi_\varepsilon^j} \in H^1(\cell)$, but it a priori does not
belong to $H^1(\mathcal{D})$. The
relation~\eqref{eq:R_global2} allows to extend the operator
$\Rop$ on $H^1({\cal D})$.

We now recall the following bound on the function
$\xi_\varepsilon^j$, that appears in~\eqref{phiexp}.
In~\cite{Efendiev2000}, this lemma is stated in dimension $d=2$, but
its proof, which essentially makes use
of~\cite[Lemma~16]{avellaneda-lin-87}, carries over to any dimension. 
\begin{lemme}[see~\cite{Efendiev2000}, Lemma~2.1]
\label{LemChen}
Let $\xi_\varepsilon^j$ be the solution to~\eqref{PB:xip}, with 
$A_{per}$ satisfying~\eqref{hyp:a_holder}.
Consider
$\cell \subset \mathbf{S}$, with $\text{diam}(\cell)=h$ 
and $\mbox{dist}(\cell,\partial \mathbf{S}) \geq h$. Then there exists a
constant $C$ independent of $h$ and $\varepsilon$ such that 
\begin{equation}
\label{eq:estim_bl}
\|\nabla \xi_\varepsilon^j\|_{L^\infty(\cell)} \leq \frac{C}{h}.
\end{equation}
\end{lemme}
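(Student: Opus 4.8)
The plan is to combine a uniform-in-$\eps$ sup bound on $\xi_\varepsilon^j$ itself with the interior Lipschitz estimate of Avellaneda and Lin~\cite{avellaneda-lin-87} for the operator $-\mbox{div}\left[A_{per}(\cdot/\eps)\nabla\,\cdot\,\right]$. The function $\xi_\varepsilon^j$ defined by~\eqref{PB:xip} is $A_{per}(\cdot/\eps)$-harmonic in $\mathbf{S}$, with the oscillatory Dirichlet data $\wper_{e_j}(\cdot/\eps)$ on $\partial\mathbf{S}$, so the two natural ingredients are a maximum-principle control of its amplitude and a scale-invariant gradient estimate in the interior.

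First I would control the boundary data and hence the amplitude of $\xi_\varepsilon^j$. Since $A_{per}$ is H\"older continuous by~\eqref{hyp:a_holder}, elliptic regularity applied to the periodic corrector problem~\eqref{PB:w0} gives $\wper_{e_j} \in C^{1,\alpha}$ on the torus; in particular $\|\wper_{e_j}\|_{L^\infty(\RR^d)} \leq C$. The boundary condition in~\eqref{PB:xip} then reads $\|\xi_\varepsilon^j\|_{L^\infty(\partial\mathbf{S})} = \|\wper_{e_j}\|_{L^\infty(\RR^d)} \leq C$, uniformly in $\eps$ and independently of the geometry of $\mathbf{S}$. Because $\xi_\varepsilon^j$ solves the homogeneous equation $-\mbox{div}\left[A_{per}(\cdot/\eps)\nabla \xi_\varepsilon^j\right]=0$ in $\mathbf{S}$, the maximum principle yields the uniform bound $\|\xi_\varepsilon^j\|_{L^\infty(\mathbf{S})} \leq C$.

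Second I would invoke the uniform interior Lipschitz estimate. The matrix $A_{per}(\cdot/\eps)$ is $\eps$-periodic, uniformly elliptic, and H\"older continuous, so the compactness method of Avellaneda and Lin~\cite[Lemma~16]{avellaneda-lin-87} applies and provides, for any ball $B_r(x_0)$ with $B_r(x_0) \subset \mathbf{S}$, the scale-invariant bound
$$
\|\nabla \xi_\varepsilon^j\|_{L^\infty(B_{r/2}(x_0))}
\leq
\frac{C}{r}\left( \frac{1}{|B_r(x_0)|} \int_{B_r(x_0)} |\xi_\varepsilon^j|^2 \right)^{1/2},
$$
where $C$ depends only on $d$, on the ellipticity constants, and on the H\"older norm of $A_{per}$, but \emph{not} on $\eps$ nor on the radius $r$. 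This $\eps$-independence is exactly the deep content of the Avellaneda--Lin theory: a direct appeal to Schauder estimates would produce a constant involving $\|A_{per}(\cdot/\eps)\|_{C^{0,\alpha}} \sim \eps^{-\alpha}$, which blows up as $\eps \to 0$. Granting this estimate is the only nontrivial step; everything else is elementary.

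Finally I would localize and rescale. For any $x_0 \in \cell$, the hypothesis $\mbox{dist}(\cell,\partial\mathbf{S}) \geq h$ guarantees $B_h(x_0) \subset \mathbf{S}$. Applying the displayed estimate with $r=h$ and bounding the average of $|\xi_\varepsilon^j|^2$ by $\|\xi_\varepsilon^j\|^2_{L^\infty(\mathbf{S})} \leq C^2$ from the first step gives
$$
|\nabla \xi_\varepsilon^j(x_0)|
\leq
\frac{C}{h}\,\|\xi_\varepsilon^j\|_{L^\infty(\mathbf{S})}
\leq
\frac{C}{h}.
$$
Taking the supremum over $x_0 \in \cell$ yields~\eqref{eq:estim_bl}. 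The main obstacle is thus entirely concentrated in securing the $\eps$-uniform interior gradient bound of~\cite{avellaneda-lin-87}; once that is in hand, the maximum principle and the covering-by-balls argument complete the proof in any dimension.
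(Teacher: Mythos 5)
Your proof is correct and follows exactly the route the paper indicates: the paper does not reprove this lemma but cites~\cite[Lemma~2.1]{Efendiev2000} and notes its proof ``essentially makes use of~\cite[Lemma~16]{avellaneda-lin-87}'', which is precisely your $\eps$-uniform interior Lipschitz estimate, combined with the maximum-principle bound $\|\xi_\varepsilon^j\|_{L^\infty(\mathbf{S})} \leq \|\wper_{e_j}\|_{L^\infty(\RR^d)} \leq C$ that the paper itself uses in Step~2a of the main proof. Your localization with $r=h$, justified by $\mbox{dist}(\cell,\partial\mathbf{S}) \geq h$, correctly completes the argument in any dimension, as the paper claims.
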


\subsubsection{Proof of Theorem~\ref{theo:H1}}
 
The proof is based on the bound~\eqref{Strang2} in
Lemma~\ref{lem:Strang2}, where the bilinear form 
${\cal A}^h_{\varepsilon,\eta}$ and the linear form $b$ 
are defined by~\eqref{def:stoch-bili}. 
In Step 1, we show that the bilinear form ${\cal
  A}^h_{\varepsilon,\eta}$ is coercive for the norm $\| \cdot \|_\h$
defined by~\eqref{eq:broken_H1}. 
Step 2 is
devoted to appropriately selecting an element $v_h \in \mathcal{W}_h$ such that
$\|u^\varepsilon_\eta - v_h\|_\h$ can be analytically estimated. This
will provide a bound on the first term in the right hand side
of~\eqref{Strang2}. 
In Step 3, we bound from above the second term in the right hand
side of~\eqref{Strang2}, using Lemmas~\ref{Lem1} and~\ref{Lem2}. 
Step 4 collects our estimates and concludes. 

\bigskip

\noindent
{\bf Step 1:} We first show that the bilinear form 
${\cal A}^h_{\varepsilon,\eta}$ defined by~\eqref{def:stoch-bili} is
coercive for the norm $\| \cdot \|_\h$ defined by~\eqref{eq:broken_H1}. 
Consider the bilinear form
$\widetilde{\mathcal{A}}^h_{\varepsilon,\eta}$ defined
by~\eqref{def:stoch-bili-tilde}. We pointed out above (see
Remark~\ref{rem:coerc}) that it is coercive on $\mathcal{V}_h$. Hence,
there exists $\alpha>0$ such that, for all $v_h \in
\mathcal{W}_h$,   
\begin{equation}
\label{eq:merc1}
\alpha \|\widetilde{v}_h\|^2_{H^1(\mathcal{D})} \leq 
\widetilde{\mathcal{A}}^h_{\varepsilon,\eta}(\widetilde{v}_h,\widetilde{v}_h)
=
\mathcal{A}^h_{\varepsilon,\eta}(v_h,v_h),
\end{equation}
where $\widetilde{v}_h \in \mathcal{V}_h$ is such that
$v_h = \Rop(\widetilde{v}_h)$. Since, in 
the bilinear form $\mathcal{A}^h_{\varepsilon,\eta}$, the matrix
$A_\eta$ is bounded, we deduce the estimate
\begin{equation}
\label{equiv-norm}
\|\widetilde{v}_h\|^2_{H^1(\mathcal{D})} \leq C \|v_h\|^2_\h,
\end{equation}
that we will use in the sequel. The sequel of this step is devoted to
proving that there exists $\widetilde{C}$ independent of $h$ and $\eps$
such that, for all $\widetilde{v}_h \in \mathcal{V}_h$, 
\begin{equation}
\label{eq:merc2}
\|v_h\|^2_\h \leq \widetilde{C} \|\widetilde{v}_h\|^2_{H^1(\mathcal{D})}
\quad \text{with $v_h = \Rop(\widetilde{v}_h)$.}
\end{equation}
Combined with~\eqref{eq:merc1}, this shows that 
${\cal A}^h_{\varepsilon,\eta}$ is coercive for the norm $\| \cdot \|_\h$.

\medskip

To prove~\eqref{eq:merc2}, we first write that,
since $v_h = \Rop(\widetilde{v}_h)$ and $\widetilde{v}_h \in
\mathcal{V}_h$, there exist some coefficients $\left\{ \beta_i
\right\}_{i=1}^L$ such that, for any $x \in {\cal D}$, 
$\widetilde{v}_h = \sum_{i=1}^L \beta_i \phi_i^0$ and
$v_h = \Rop(\widetilde{v}_h) = \sum_{i=1}^L \beta_i \phi_i^\eps$.
Consider now an element $\cell$, and its corresponding oversampling
domain $\mathbf{S}$. We know
from~\eqref{eq:MsFEM-Over-nb-1} and~\eqref{eq:MsFEM-Over-nb-3}
that
$$
\forall x \in \cell, \quad 
\widetilde{v}_h(x) = \sum_{i=1}^L \sum_{j=1}^{d+1} \beta_i \alpha_{ij}
\chi_j^{0,\mathbf{S}}(x), 
\quad
v_h(x) = \sum_{i=1}^L \sum_{j=1}^{d+1} \beta_i \alpha_{ij}
\chi_j^{\eps,\mathbf{S}}(x).
$$
Consider now the functions
$$
\widetilde{w}^{\mathbf{S}}_h(x) := 
\sum_{i=1}^L \sum_{j=1}^{d+1} \beta_i \alpha_{ij}
\chi_j^{0,\mathbf{S}}(x),
\quad
w^{\mathbf{S}}_h(x) := \sum_{i=1}^L \sum_{j=1}^{d+1} \beta_i \alpha_{ij}
\chi_j^{\eps,\mathbf{S}}(x),
$$
defined on $\mathbf{S}$, and that satisfy, by construction,
\begin{equation}
\label{eq:merc3}
\forall x \in \cell, \quad 
\widetilde{v}_h(x) = \widetilde{w}^{\mathbf{S}}_h(x),
\quad
v_h(x) = w^{\mathbf{S}}_h(x).
\end{equation}
In view of~\eqref{PB:MsFEM-Over-nb}, we have
$$
-\mbox{div}\left[A^{\varepsilon}(x)\nabla
   w^{\mathbf{S}}_h(x) \right] = 0 
\ \ \mbox{in $\mathbf{S}$}, 
\quad
w^{\mathbf{S}}_h = \widetilde{w}^{\mathbf{S}}_h
\ \ \mbox{on $\partial \mathbf{S}$},
$$
which implies that
$$
\| w^{\mathbf{S}}_h \|_{H^1({\mathbf{S}})} \leq C 
\| \widetilde{w}^{\mathbf{S}}_h \|_{H^1({\mathbf{S}})}.
$$
We deduce from~\eqref{eq:merc3} and the above bound that
\begin{equation}
\label{eq:merc4}
\| v_h \|_{H^1(\cell)} 
=
\| w^{\mathbf{S}}_h \|_{H^1(\cell)}
\leq 
\| w^{\mathbf{S}}_h \|_{H^1({\mathbf{S}})}
\leq C 
\| \widetilde{w}^{\mathbf{S}}_h \|_{H^1({\mathbf{S}})}.
\end{equation}
We next see that there exists $C$ independent of $h$ such that, for any
piecewise-affine function $\tau$ on $\mathbf{S}$, we have $\| \tau
\|_{H^1({\mathbf{S}})} \leq C \| \tau \|_{H^1(\cell)}$, provided there
exists $0<c_- \leq c_+$ independent of the element such that
$\dps c_- \leq \frac{|\mathbf{S}|}{|\cell|} \leq c_+$. Using this bound
for $\tau = \widetilde{w}^{\mathbf{S}}_h$, we infer
from~\eqref{eq:merc4} and~\eqref{eq:merc3} that
$$
\| v_h \|_{H^1(\cell)} 
\leq C 
\| \widetilde{w}^{\mathbf{S}}_h \|_{H^1({\mathbf{S}})}
\leq
\bar C \| \widetilde{w}^{\mathbf{S}}_h \|_{H^1(\cell)}
=
\bar C \| \widetilde{v}_h \|_{H^1(\cell)}.
$$
Summing over all elements $\cell$, we obtain~\eqref{eq:merc2}, and this
concludes this first step. 

\bigskip

\noindent
{\bf Step 2:} 
Let $\Pi^h u_\eta^\star$ be
the $H^1$ projection of $u_\eta^\star$, solution
to~\eqref{PB:homo}, on the standard FEM space $\mathcal{V}_h$. We have 
$\Rop \left( \Pi^h u_\eta^\star \right) \in \mathcal{W}_h$
(recall $\Rop$ is defined by~\eqref{eq:R_global}, and equivalently
writes as in~\eqref{eq:R_global2}).
Our argument is based on the following triangle inequality: 
\begin{eqnarray}
\label{eq:triang2_pre}
\esp \left( \inf\limits_{v_h \in \mathcal{W}_h} 
\| u_\eta^\varepsilon - v_h\|^2_\h \right) 
& \leq &
2 \esp \left( \|u_\eta^\varepsilon - v_\eta^\varepsilon\|^2_{H^1(\mathcal{D})}
\right)
+
2 \esp \left( \inf\limits_{v_h \in \mathcal{W}_h} 
\| v_\eta^\varepsilon - v_h\|^2_\h \right) 
\\
&\leq& 
2 \esp \left( \|u_\eta^\varepsilon - v_\eta^\varepsilon\|^2_{H^1(\mathcal{D})}
\right)
+
2 \esp \left( \| v_\eta^\varepsilon - \Rop(\Pi^h u_\eta^\star) \|^2_\h
\right) 
\label{eq:triang2_pre2}
\\
\label{eq:triang2}
&\leq& 
2 \esp \left( \|u_\eta^\varepsilon - v_\eta^\varepsilon\|^2_{H^1(\mathcal{D})}
\right) 
+ 
4 \esp \left( \|v_\eta^\varepsilon - \Rop(u_\eta^\star)\|^2_\h \right) 
+ 
4 \| \Rop(u_\eta^\star) - \Rop(\Pi^h u_\eta^\star) \|^2_\h,
\end{eqnarray}
where $v^\varepsilon_\eta(\cdot,\omega) \in H^1(\mathcal{D})$ is defined
by~\eqref{expueta}. The estimate~\eqref{restueta} in
Theorem~\ref{exp2scaleueta} bounds the first term from above. In the
following two sub-steps, we bound the other two terms
of~\eqref{eq:triang2}. 

\medskip

\noindent
{\bf Step 2a: bound on 
$\esp \left( \|v_\eta^\varepsilon - \Rop(u_\eta^\star)\|^2_\h \right)$} 

Using the expansion~\eqref{exp:ustareta} of
$u_\eta^\star$ in a series in powers of $\eta$,
and~\eqref{eq:R_global2}, we write 
$$
\Rop(u_\eta^\star) = u_0^\star + \eta \esp(X_0) \overline{u}_1^\star +
\varepsilon \sum_{p=1}^d \left( \wper_{e_p}\left(\frac{\cdot}{\varepsilon}\right) -
  \overline{\xi_\varepsilon^p} \right) \left( \partial_p u_0^\star + \eta
\esp(X_0) \partial_p \overline{u}_1^\star \right) + \eta^2 g_\eta, 
$$
where
\begin{equation}
\label{eq:def_geta}
g_\eta = r_\eta + \varepsilon \sum_{p=1}^d \left(
  \wper_{e_p}\left(\frac{\cdot}{\varepsilon}\right) -
  \overline{\xi_\varepsilon^p} \right) \partial_p r_\eta.
\end{equation}
Using~\eqref{expueta}, we thus have
\begin{multline}
v^\varepsilon_\eta(\cdot,\omega) - \Rop(u_\eta^\star) 
= 
\eta \varepsilon \sum\limits_{p=1}^d \left(\esp(X_0)
  \psi_{e_p}\left(\frac{\cdot}{\varepsilon}\right) \partial_p u_0^\star +
  \sum\limits_{k \in I_\varepsilon} (X_k(\omega)-\esp(X_0)) \
  \chi_{e_p}\left(\frac{\cdot}{\varepsilon}-k\right) \partial_p u_0^\star
\right) 
\\
+ 
\varepsilon \sum\limits_{p=1}^d \overline{\xi_\varepsilon^p} 
(\partial_p u_\eta^\star - \eta^2 \partial_p r_\eta)
- \eta^2 g_\eta.
\label{expression0}
\end{multline}
To bound 
$\dps \esp \left[ \sum_\cell \|v^\varepsilon_\eta -
  \Rop(u_\eta^\star)\|^2_{H^1(\cell)} \right]$,  
we first establish a few simple results.
First, there exists $\delta > 0$ such that, for any $1 \leq p \leq d$,
we have 
\begin{equation}
\label{aide1a_pre}
\wper_{e_p} \in C^{1,\delta}(Q),
\end{equation}
where $w_p^0$ is the periodic corrector, solution to~\eqref{PB:w0}. This
is a consequence of the fact that $A_{per}$ is 
H\"older-continuous (see assumption~\eqref{hyp:a_holder}), 
in view 
of~\cite[Theorem 8.22 and Corollary~8.36]{gilbarg-trudinger}.
We infer from~\eqref{aide1a_pre} and the periodicity of $\wper_{e_p}$
that, for any $1 \leq p \leq d$, we have
\begin{equation}
\label{aide1a}
\wper_{e_p} \in W^{1,\infty}(\RR^d).
\end{equation}
Second, for any $1 \leq p \leq d$, we have
\begin{equation}
\label{aide1b}
\left\|
  \chi_{e_p}\left(\frac{\cdot}{\varepsilon}-k\right)\right\|_{L^2(\mathcal{D})}^2 
\leq 
C \varepsilon^d R_{d,\eps}
\quad \text{and} \quad
\nabla \chi_{e_p} \in \left( L^2(\RR^d) \right)^d,
\end{equation}
where $C$ is independent from $\varepsilon$ and $k$,
$R_{d,\eps} = 1$ if $d>2$ and $R_{d,\eps} = 1+\ln(1/\varepsilon)$ if $d=2$
(see~\eqref{eq:def_chi} and~\eqref{eq:bound_chi}).
Third, we see that, for any $1 \leq p \leq d$,  
\begin{equation}
\label{aide2}
\left\| \overline{\xi_\varepsilon^p} \right\|_{L^\infty(\mathcal{D})} \leq C
\quad \text{and} \quad
\left\| \nabla \xi_\varepsilon^p \right\|_{L^\infty(\cell)} \leq \frac{C}{h},
\end{equation}
where $C$ is independent from $\varepsilon$ and $h$. 
The second assertion is given by Lemma~\ref{LemChen} above,
whereas the first assertion comes~\eqref{PB:xip}: using
again~\cite[Theorem 8.22 and Corollary~8.36]{gilbarg-trudinger}
and~\eqref{aide1a_pre}, we first see that, for any $\mathbf{S}$,
$\xi_\varepsilon^p \in C^{1,\delta}(\mathbf{S})$ for some
$\delta>0$. Using next the maximum principle on~\eqref{PB:xip}, we have
$\dps
\| \xi_\varepsilon^p \|_{L^\infty(\mathbf{S})} \leq
\left\|w_{e_p} \left(\frac{\cdot}{\varepsilon}\right) \right\|_{L^\infty(\RR^d)} 
\leq C.
$
Lastly, using~\eqref{eq:def_geta},~\eqref{eq:bound_reta_H2},~\eqref{aide1a}
and~\eqref{aide2}, we obtain that, for any element 
$\cell$, 
$$
\|g_\eta\|_{H^1(\cell)} \leq C \|r_\eta\|_{H^2(\cell)} \left(1 +
  \varepsilon + \frac{\varepsilon}{h}\right) \leq C \|r_\eta\|_{H^2(\cell)},
$$
hence
\begin{equation}
\label{aide3}
\|g_\eta\|^2_{L^2(\mathcal{D})} \leq
\sum_\cell \|g_\eta\|^2_{H^1(\cell)} \leq 
C \|r_\eta\|^2_{H^2(\mathcal{D})}
\leq C.
\end{equation}

\medskip

We are now in position to estimate~\eqref{expression0}. Using that 
$u_0^\star \in W^{1,\infty}(\mathcal{D})$,
we deduce from~\eqref{aide1b},~\eqref{aide2}
and~\eqref{aide3} that  
\begin{eqnarray}
\nonumber
\esp \left[ \|v^\varepsilon_\eta -
\Rop(u_\eta^\star)\|^2_{L^2(\mathcal{D})} \right] 
& \leq &
C \eta^2 \varepsilon^2  \sum\limits_{p=1}^d \|\partial_p
u_0^\star\|^2_{L^\infty} \left(\esp(X_0)^2
  \left\| \psi_{e_p} \left(\frac{\cdot}{\varepsilon}\right)\right\|^2_{L^2(\mathcal{D})}
  + \var(X_0) \sum\limits_{k \in I_\varepsilon}
  \left\| \chi_{e_p} \left(\frac{\cdot}{\varepsilon}-k\right)\right\|_{L^2(\mathcal{D})}^2\right)
\\
\nonumber
&& \qquad + C \varepsilon^2 \sum\limits_{p=1}^d 
\left\| \overline{\xi_\varepsilon^p} \right\|^2_{L^\infty(\mathcal{D})}
\left( \|\nabla u_\eta^\star\|^2_{L^2(\mathcal{D})} +
  \eta^4 \|\nabla r_\eta \|^2_{L^2(\mathcal{D})} \right) + 
C \eta^4 \|g_\eta\|^2_{L^2(\mathcal{D})}
\\
\nonumber
& \leq &
C \eta^2 \varepsilon^2  \left( 1 +
  (\text{Card $I_\eps$}) \, \eps^d R_{d,\eps} \right)
+ C \varepsilon^2 + C \eta^4
\\
& \leq &
C \left[ \eta^2 \varepsilon^2 R_{d,\eps} + \varepsilon^2 + \eta^4 \right]
\label{borneL2-1}
\end{eqnarray}
for some constant $C$ independent of $\varepsilon$, $\eta$ and
$h$, and where, we recall, $R_{d,\eps} = 1 + \ln(1/\varepsilon)$ if
$d=2$ and $R_{d,\eps} = 1$ if $d>2$.

We thus have a bound on 
$v^\varepsilon_\eta(\cdot,\omega) - \Rop(u_\eta^\star)$ in
the $L^2$ norm. To prove a bound in the broken $H^1$ norm,
we consider $\nabla v^\varepsilon_\eta(\cdot,\omega) - \nabla
\Rop(u_\eta^\star)$: we see from~\eqref{expression0} that, in each
element $\cell$,
\begin{equation}
\nabla v^\varepsilon_\eta(\cdot,\omega) - \nabla \Rop(u_\eta^\star) 
= 
\eta \varepsilon D_0 + \eta D_1 + D_2 - \eta^2 \nabla g_\eta,
\label{expv-Reta}
\end{equation}
where
\begin{eqnarray*}
D_0 &=& \sum\limits_{p=1}^d  \left(\esp(X_0)
  \psi_{e_p} \left(\frac{\cdot}{\varepsilon}\right) \nabla \partial_p
  u_0^\star + \sum\limits_{k \in I_\varepsilon} (X_k(\omega)-\esp(X_0))
  \chi_{e_p} \left(\frac{\cdot}{\varepsilon}-k\right) \nabla \partial_p
  u_0^\star \right), 
\\
D_1 &=& \sum\limits_{p=1}^d  \left(\esp(X_0) \nabla
  \psi_{e_p} \left(\frac{\cdot}{\varepsilon}\right) \partial_p u_0^\star +
  \sum\limits_{k \in I_\varepsilon} (X_k(\omega)-\esp(X_0)) \nabla
  \chi_{e_p} \left(\frac{\cdot}{\varepsilon}-k\right) \partial_p u_0^\star
\right), 
\\
D_2 &=& \varepsilon \sum_{p=1}^d \left( \overline{\xi_\varepsilon^p} \ \nabla
  \partial_p (u^\star_\eta - \eta^2 r_\eta) + 
\nabla \overline{\xi_\varepsilon^p} \ \partial_p (u^\star_\eta - \eta^2 r_\eta) \right).
\end{eqnarray*}
Note that $D_0$ and $D_1$ are globally defined on $\mathcal{D}$, but
$D_2$ is not (as $\overline{\xi_\varepsilon^p}$ may have jumps from one element
$\cell$ to the other). We now bound these three
quantities. Using~\eqref{aide1b} and 
the fact that $u_0^\star \in W^{2,\infty}(\mathcal{D})$, we
have
\begin{eqnarray}
\esp(\|D_0\|^2_{L^2(\mathcal{D})}) 
&\leq& 
C \sum\limits_{p=1}^d  \|\nabla \partial_p u_0^\star\|^2_{L^\infty}
\left(\esp(X_0)^2
  \left\|\psi_{e_p}\left(\frac{\cdot}{\varepsilon}\right)\right\|^2_{L^2(\mathcal{D})}
  + \var(X_0) \sum\limits_{k \in I_\varepsilon}
  \left\| \chi_{e_p} \left(\frac{\cdot}{\varepsilon}-k\right) \right\|_{L^2(\mathcal{D})}^2 \right) 
\nonumber \\
&\leq& 
C R_{d,\eps},
\label{majL2E0}
\end{eqnarray}
where $C$ is a constant independent of $\varepsilon$ and $h$. We now
turn to $D_1$: using again~\eqref{aide1b} and that $u_0^\star \in
W^{1,\infty}(\mathcal{D})$, we obtain 
\begin{eqnarray}
\esp(\|D_1\|^2_{L^2(\mathcal{D})}) 
&\leq& 
C \sum\limits_{p=1}^d
 \|\partial_p u_0^\star\|^2_{L^\infty} \left( \esp(X_0)^2 \left\|\nabla
   \psi_{e_p}\left(\frac{\cdot}{\varepsilon}\right)\right\|^2_{L^2(\mathcal{D})} +
   \var(X_0) \sum\limits_{k \in I_\varepsilon} \left\|\nabla
   \chi_{e_p} \left(\frac{\cdot}{\varepsilon}-k\right)
 \right\|_{L^2(\mathcal{D})}^2  \right) 
\nonumber \\
&\leq& 
C \sum\limits_{p=1}^d 
\left( 1 + \var(X_0) \sum\limits_{k \in I_\varepsilon} \varepsilon^d 
\left\| \nabla \chi_{e_p} \right\|_{L^2(\RR^d)}^2 \right) 
\nonumber \\
&\leq& C,
\label{majL2E1}
\end{eqnarray}
where $C$ is a constant independent of $\varepsilon$ and $h$. 
Turning to $D_2$, using~\eqref{aide2}, 
we have in each element $\cell$ that
$$
\|D_2\|^2_{L^2(\cell)} \leq 
C \varepsilon^2 \| u_\eta^\star - \eta^2 r_\eta\|^2_{H^2(\cell)} \left(1 +
\frac{1}{h^2}\right) 
\leq 
C \frac{\varepsilon^2}{h^2} \| u_\eta^\star - \eta^2 r_\eta\|^2_{H^2(\cell)}, 
$$
hence, using~\eqref{eq:bound_reta_H2},
\begin{equation}
\sum_\cell \|D_2\|^2_{L^2(\cell)} \leq 
C \frac{\varepsilon^2}{h^2} \| u_\eta^\star - \eta^2 r_\eta\|^2_{H^2(\mathcal{D})}
\leq C \frac{\varepsilon^2}{h^2}.
\label{majL2E2}
\end{equation}

Collecting~\eqref{expv-Reta},~\eqref{majL2E0},~\eqref{majL2E1},~\eqref{majL2E2}
and~\eqref{aide3}, we obtain that
$$
\esp\left[ \sum_\cell \|\nabla v^\varepsilon_\eta - \nabla
  \Rop(u_\eta^\star)\|^2_{L^2(\cell)} \right] 
\leq 
C \left( \eta^2 \eps^2 R_{d,\eps}
+
\eta^2 + \frac{\varepsilon^2}{h^2} + \eta^4 \right). 
$$
Collecting this bound with~\eqref{borneL2-1}, and assuming that $|\eta|
< 1$ and $\eps^2 R_{d,\eps} \leq 1$, we
deduce that
\begin{equation}
\esp \left[ \| v^\varepsilon_\eta - \Rop(u_\eta^\star) \|^2_{\h} \right]
=
\esp\left[ \sum_\cell \| v^\varepsilon_\eta -
  \Rop(u_\eta^\star)\|^2_{H^1(\cell)} \right]
\leq 
C \left( \eta^2 + \frac{\varepsilon^2}{h^2} \right),
\label{majH1v-Reta}
\end{equation}
where $C$ is independent from $\eps$, $h$ and $\eta$.

\bigskip

\noindent
{\bf Step 2b: bound on 
$\| \Rop(u_\eta^\star) - \Rop(\Pi^h u_\eta^\star) \|^2_\h$} 

Recall that $\Pi^h u_\eta^\star$
is the $H^1$ projection of $u_\eta^\star$ (on the standard
$\mathbb{P}_1$ FEM space $\mathcal{V}_h$), hence $\|\Pi^h
u_\eta^\star\|_{H^1(\mathcal{D})}\leq
\|u_\eta^\star\|_{H^1(\mathcal{D})}$.  
In addition, using~\eqref{eq:bound_reta_H2}, we have, using a standard
result from the theory of $\mathbb{P}_1$ finite elements
(see~\cite[Theorem~3.1.6 p.~124]{Ciarlet})
\begin{equation}
\label{ef_ustar}
\|u_\eta^\star-\Pi^h u_\eta^\star\|_{L^2(\mathcal{D})} + 
h \|u_\eta^\star-\Pi^h u_\eta^\star\|_{H^1(\mathcal{D})} 
\leq C h^2
\| \nabla^2 u_\eta^\star \|_{L^2(\mathcal{D})} 
\leq C h^2,
\end{equation}
where $C$ is a constant independent of $h$ and $\eta$.
In view of~\eqref{eq:R_global2}, we have
$$
\Rop(u_\eta^\star) - \Rop(\Pi^h u_\eta^\star) 
= 
u_\eta^\star - \Pi^h u_\eta^\star + \varepsilon \sum\limits_{p=1}^d
\left(\wper_{e_p} \left(\frac{\cdot}{\varepsilon}\right) -
  \overline{\xi_\varepsilon^p} \right) \partial_p (u_\eta^\star - \Pi^h
u_\eta^\star).
$$
We deduce from~\eqref{aide1a},~\eqref{aide2} and~\eqref{ef_ustar} that
\begin{equation}
\| \Rop(u_\eta^\star) - \Rop(\Pi^h u_\eta^\star) \|_{L^2(\mathcal{D})} 
\leq C (h^2 + \varepsilon h). 
\label{eq:yo}
\end{equation}
We now turn to bounding the gradients. Recall that $\nabla (\Pi^h
u_\eta^\star)$ is constant in each element $\cell$. 
We thus have, using~\eqref{aide1a} and~\eqref{aide2}, that
\begin{eqnarray*}
\|\nabla \Rop(u_\eta^\star) - 
\nabla \Rop(\Pi^h u_\eta^\star)\|_{L^2(\cell)} 
& \leq & 
\|u_\eta^\star-\Pi^h u_\eta^\star \|_{H^1(\cell)} \left( 1+ 
\sum\limits_{p=1}^d \left(
    \left\| \nabla \wper_{e_p} \left(\frac{\cdot}{\varepsilon}\right)
    \right\|_{L^\infty(\cell)} + 
\varepsilon \| \nabla \xi_\varepsilon^p\|_{L^\infty(\cell)} \right) \right) 
\\
&& \qquad
+ \varepsilon \sum\limits_{p=1}^d 
\left\| \wper_{e_p} \left(\frac{\cdot}{\varepsilon}\right) - 
\xi_\varepsilon^p \right\|_{L^\infty(\cell)}
\|u_\eta^\star \|_{H^2(\cell)} 
\\
& \leq & 
C \|u_\eta^\star-\Pi^h u_\eta^\star \|_{H^1(\cell)} 
\left( 1+ \frac{\varepsilon}{h} \right) 
+ \varepsilon \|u_\eta^\star \|_{H^2(\cell)}.
\end{eqnarray*}
We then deduce, using~\eqref{ef_ustar} and~\eqref{eq:bound_reta_H2}, that
\begin{eqnarray*}
\sum_\cell \|\nabla \Rop(u_\eta^\star) - 
\nabla \Rop(\Pi^h u_\eta^\star)\|^2_{L^2(\cell)} 
& \leq &
C \|u_\eta^\star-\Pi^h u_\eta^\star \|^2_{H^1(\mathcal{D})} 
\left( 1+ \frac{\varepsilon}{h} \right)^2 
+ \varepsilon^2 \|u_\eta^\star \|^2_{H^2(\mathcal{D})}
\\
& \leq &
C h^2 \left( 1+ \frac{\varepsilon}{h} \right)^2 
+ C \varepsilon^2,
\end{eqnarray*}
where $C$ is a constant independent of $\varepsilon$, $\eta$ and
$h$. Collecting this bound and~\eqref{eq:yo}, we obtain
\begin{equation}
\label{eq:yo2}
\| \Rop(\Pi^h u_\eta^\star) - \Rop(u_\eta^\star)\|^2_\h
=
\sum_\cell \| \Rop(u_\eta^\star) - 
\Rop(\Pi^h u_\eta^\star)\|^2_{H^1(\cell)} 
\leq
C \left( h^2 + \varepsilon^2 \right),
\end{equation}
where $C$ is a constant independent of $\varepsilon$, $\eta$ and
$h$.

\bigskip

\noindent
{\bf Step 2c:} 
We are now in position to bound the first term in~\eqref{Strang2}. 
We infer from~\eqref{eq:triang2},~\eqref{restueta},~\eqref{majH1v-Reta}
and~\eqref{eq:yo2} that 
\begin{eqnarray}
\sqrt{ \esp \left( \inf\limits_{v_h \in \mathcal{W}_h} 
\| u^\varepsilon_\eta - v_h\|^2_\h \right) } 
&\leq&
C \left(\sqrt{\varepsilon} + \eta \sqrt{\varepsilon
    \ln(1/\varepsilon)} + \eta
+ \frac{\varepsilon}{h} + h \right)
\nonumber
\\
&\leq&
C \left(\sqrt{\varepsilon} + \eta
+ \frac{\varepsilon}{h} + h \right),
\label{eq3:theo-uS}
\end{eqnarray}
where we have assumed that $\varepsilon \ln(1/\varepsilon) \leq 1$.

\bigskip

\noindent
{\bf Step 3:} We next turn to estimating the non-conforming error,
namely the second term of the right-hand side of~\eqref{Strang2}.
For any $w_h \in \mathcal{W}_h$, introduce $\widetilde{w}_h \in \mathcal{V}_h$
such that $\Rop(\widetilde{w}_h) = w_h$ (recall that
$\Rop$ is defined by~\eqref{eq:R_global}). We note that
$b(w_h) = \widetilde{b}_h(\widetilde{w}_h)$, where the linear forms $b$
and $\widetilde{b}_h$ are defined by~\eqref{def:stoch-bili}
and~\eqref{def:stoch-bili-tilde}. Using the weak form of the 
homogenized equation (see~\eqref{def:homo-bili}), we see that
$b(\widetilde{w}_h) =
\mathcal{A}_\eta^\star(u_\eta^\star,\widetilde{w}_h)$. In addition, by
definition of $\widetilde{\mathcal{A}}^h_{\varepsilon,\eta}$
(see~\eqref{def:stoch-bili-tilde}), we have  
$\dps
\widetilde{\mathcal{A}}^h_{\varepsilon,\eta}(\Pi_h u_\eta^\star,\widetilde{w}_h)
=
\mathcal{A}^h_{\varepsilon,\eta}(\Rop(\Pi_h u_\eta^\star),w_h)$.
For any $w_h \in \mathcal{W}_h$, we have
\begin{eqnarray*}
\left|\mathcal{A}^h_{\varepsilon,\eta}(u_\eta^\varepsilon,w_h) - b(w_h)\right| 
&\leq& 
\left|\mathcal{A}^h_{\varepsilon,\eta}(u^\varepsilon_\eta,w_h) - 
\mathcal{A}^h_{\varepsilon,\eta}(\Rop(\Pi_h u_\eta^\star),w_h)\right| 
+ 
\left| \mathcal{A}^h_{\varepsilon,\eta}(\Rop(\Pi_h u_\eta^\star),w_h) -
b(\widetilde{w}_h)\right| 
+  
\left| b(\widetilde{w}_h) - b(w_h) \right| 
\\
&\leq& 
\| A_\eta \|_{L^\infty} 
\|u^\varepsilon_\eta - \Rop(\Pi_h u_\eta^\star)\|_\h \ \|w_h\|_\h 
+ 
\left|\widetilde{\mathcal{A}}^h_{\varepsilon,\eta}(\Pi_h u_\eta^\star,\widetilde{w}_h) 
- 
\mathcal{A}_\eta^\star(u_\eta^\star,\widetilde{w}_h)\right| 
+ 
\left|b(\widetilde{w}_h) - \widetilde{b}_h(\widetilde{w}_h)\right| 
\\
&\leq& 
\| A_\eta \|_{L^\infty} 
\|u_\eta^\varepsilon - \Rop(\Pi_h u_\eta^\star)\|_\h \|w_h\|_\h 
+ 
\left|\widetilde{\mathcal{A}}^h_{\varepsilon,\eta}(\Pi^h u_\eta^\star,\widetilde{w}_h) 
- 
\mathcal{A}_\eta^\star(\Pi^h u_\eta^\star,\widetilde{w}_h)\right| 
\\
&& \quad \quad 
+ \| A^\star_\eta \| \ \|u_\eta^\star-\Pi^h u_\eta^\star\|_{H^1(\mathcal{D})} 
\|\widetilde{w}_h\|_{H^1(\mathcal{D})} 
+ \left|b(\widetilde{w}_h) - \widetilde{b}_h(\widetilde{w}_h)\right|,
\end{eqnarray*}
where we have successively used the continuity of the bilinear forms
$\mathcal{A}^h_{\varepsilon,\eta}$ and $\mathcal{A}_\eta^\star$. Using
Lemmas~\ref{Lem1} and~\ref{Lem2} for the second and the fourth terms
respectively, we deduce that
\begin{eqnarray*}
\left|\mathcal{A}^h_{\varepsilon,\eta}(u_\eta^\varepsilon,w_h) - b(w_h)\right| 
&\leq& 
C 
\|u_\eta^\varepsilon - \Rop(\Pi_h u_\eta^\star)\|_\h \|w_h\|_\h 
+ 
C \left( \frac{\varepsilon}{h} + \eta \lambda(\omega,h,\varepsilon) +
  \eta^2 \mathcal{C}(\eta) \right)
\| \Pi^h u_\eta^\star \|_{H^1(\mathcal{D})} \| \widetilde{w}_h \|_{H^1(\mathcal{D})} 
\\
&& \quad \quad 
+ C \|u_\eta^\star-\Pi^h u_\eta^\star\|_{H^1(\mathcal{D})} 
\|\widetilde{w}_h\|_{H^1(\mathcal{D})} 
+ C \varepsilon \| \widetilde{w}_h \|_{H^1(\mathcal{D})},
\end{eqnarray*}
hence, using~\eqref{equiv-norm} and~\eqref{ef_ustar},
\begin{equation}
\label{eq:pluie1}
\frac{\left|\mathcal{A}^h_{\varepsilon,\eta}(u_\eta^\varepsilon,w_h) -
  b(w_h)\right|}{\|w_h\|_\h} 
\leq
C \|u_\eta^\varepsilon - \Rop(\Pi_h u_\eta^\star)\|_\h 
+ 
C \left( h + \frac{\varepsilon}{h} + \eta \lambda(\omega,h,\varepsilon)
  + \varepsilon + \eta^2 \mathcal{C}(\eta) 
\right).
\end{equation}
The first term is bounded as in Step 2:
$$
\| u_\eta^\varepsilon - \Rop(\Pi_h u_\eta^\star) \|_\h 
\leq
\| u_\eta^\varepsilon - v_\eta^\varepsilon \|_{H^1({\cal D})}
+
\| v_\eta^\varepsilon - \Rop(u_\eta^\star) \|_\h
+
\| \Rop(u_\eta^\star) - \Rop(\Pi_h u_\eta^\star) \|_\h, 
$$
hence, using~\eqref{restueta},~\eqref{majH1v-Reta} and~\eqref{eq:yo2}, and
assuming that $\eps \ln(1/\eps) \leq 1$, we have
\begin{equation}
\label{eq:pluie2}
\sqrt{
\esp \left[ \| u_\eta^\varepsilon - \Rop(\Pi_h u_\eta^\star) \|^2_\h \right]
}
\leq
C \left( \sqrt{\eps} + \eta + \frac{\eps}{h} + h \right).
\end{equation}
Collecting~\eqref{eq:pluie1} and~\eqref{eq:pluie2}, we thus obtain
\begin{equation}
\sqrt{ \esp \left[ \left( \sup_{w_h \in {\cal W}_h}
\frac{\left|\mathcal{A}^h_{\varepsilon,\eta}(u_\eta^\varepsilon,w_h) -
  b(w_h)\right|}{\|w_h\|_\h} 
\right)^2 \right] }
\leq
C \left( h + \frac{\varepsilon}{h} + \eta 
\sqrt{ \esp \left[ \lambda^2(\cdot,h,\varepsilon) \right] }
  + \sqrt{\varepsilon} + \eta + \eta^2 \mathcal{C}(\eta) 
\right).
\label{eq2:theo-uS}
\end{equation}

\bigskip

\noindent
{\bf Step 4:} Collecting~\eqref{Strang2},~\eqref{eq3:theo-uS} and~\eqref{eq2:theo-uS}, we get
$$
\sqrt{ \esp \left[ \|u_\eta^\varepsilon - u_S\|^2_\h \right] } 
\leq C \left( \sqrt{\varepsilon} + h + \frac{\varepsilon}{h} + \eta
\sqrt{\esp \left[ \lambda^2(\cdot,h,\varepsilon) \right] } + \eta + \eta^2 \mathcal{C}(\eta) 
\right)
$$
where $C$ is a constant independent of $\varepsilon$, $h$ and $\eta$, and
$\mathcal{C}$ is a bounded function as $\eta$ goes to $0$.
Using~\eqref{eq:resu4a}, we deduce that 
$$
\sqrt{ \esp \left[ \|u_\eta^\varepsilon - u_S\|^2_\h \right] } 
\leq C \left( \sqrt{\varepsilon} + h + \frac{\varepsilon}{h} + \eta
  \left( \frac{\varepsilon}{h} \right)^{d/2} \ln(N(h)) + \eta + \eta^2 \mathcal{C}(\eta) 
\right)
$$
where $N(h)$ is the number of elements $\cell$ in the domain (which is of
order $h^{-d}$ in dimension $d$). This concludes the proof of Theorem~\ref{theo:H1}.

\subsection{The one dimensional case}
\label{sec:1d}

In this section, we briefly consider the one dimensional situation. 
As in the multi-dimensional case, we assume here that $\dps
a^\varepsilon_\eta(x,\omega) = a_\eta
\left(\frac{x}{\varepsilon},\omega\right)$, where $a_\eta$ is a 
stationary random function satisfying, for any $|\eta| \leq 1$,
the condition $0< a_- \leq a_\eta(x,\omega) \leq a^+$ almost
everywhere in $\RR$, almost surely. In line
with~\eqref{def:A-eta-1}, we assume that
\begin{equation}
a_\eta(x,\omega)=a_{per}(x)+ \eta \: a_1(x,\omega),
\label{def:a-eta-1d}
\end{equation}
where $\eta$ is a small parameter ($|\eta| \leq 1$),
$a_{per}$ is a $1$-periodic function satisfying the 
condition $0< a_- \leq a_{per}(x) \leq a^+$ almost everywhere on $\RR$,
and $a_1$ is a bounded stationary random function:
$|a_1(x,\omega)|\leq C$ almost everywhere in $\RR$, almost surely. 
In the vein of~\eqref{struc:A1}, we suppose that
\begin{equation}
\label{struc:a1}
a_1(x,\omega) = \sum\limits_{k \in \ZZ} \mathbf{1}_{(k,k+1]}(x)
X_k(\omega) \, b_{per}(x)
\quad \text{such that} \quad
\exists C, \, \forall k \in \ZZ, \quad 
|X_k(\omega)| \leq C \quad \text{ almost surely},
\end{equation}
where $\left(X_k(\omega)\right)_{k\in \ZZ}$ is a sequence of
i.i.d. scalar random variables 
and $b_{per} \in L^\infty(\RR)$ is a $1$-periodic
function. 

Note that, in this one-dimensional setting, we
do not make any regularity assumption on $a_{per}$ (in the vein
of~\eqref{hyp:a_holder}). In the multi-dimensional case, this
assumption is useful to e.g. state that the periodic corrector satisfies
$\wper_p \in W^{1,\infty}(\RR^d)$ for any $p \in \RR^d$. In the
one-dimensional case, the corrector problem can be solved analytically,
and one can see that the above assumption $a_{per}(x) \geq a_- >
0$ almost everywhere on $\RR$ is sufficient to obtain such regularity on
the corrector. Similarly, we do not need to assume here, in contrast to
Theorem~\ref{theo:H1}, that $u_0^\star$ and $\overline{u}_1^\star$ defined
by~\eqref{eq:homog-0} and~\eqref{PB:u1barstar} both belong to 
$W^{2,\infty}({\cal D})$ (an assumption equivalent to $f \in
L^\infty({\cal D})$, in the present one-dimensional setting). The
assumption $f \in L^2({\cal D})$ is sufficient. 

The problem (\ref{PB:stoch-a}) now reads 
\begin{equation}
\dps-\frac{d}{dx} \left( a_\eta\left(\frac{x}{\varepsilon},\omega\right)
  \frac{d}{dx}u_\eta^\varepsilon(x,\omega) \right) = f(x) \ \ \mbox{ in
  $(0,1)$}, 
\quad
u_\eta^\varepsilon(0,\omega) = u_\eta^\varepsilon(1,\omega) = 0.
\label{PB:stoch-1d}
\end{equation}
We consider a uniform discretization of the interval $(0,1)$ in the
elements $\cell_i=(x_i,x_{i+1})$, with $x_{i+1}-x_i=h=1/L$
for some $L\in \NN^\star$. 

The one-dimensional version of Theorem~\ref{theo:H1} reads as follows:
\begin{theorem}
\label{theo:H1-1d}
In the one-dimensional setting, 
assume that $a_\eta^\varepsilon$
satisfies~\eqref{def:a-eta-1d}-\eqref{struc:a1}. 
Let $u_\eta^\varepsilon$ be the solution to~\eqref{PB:stoch-1d} with $f
\in L^2(0,1)$, and $u_S$ be the weakly stochastic
MsFEM solution to~\eqref{eq:var-mac}. Suppose that 
$h/\eps \in \NN^\star$. We then have 
\begin{equation}
\label{est-H1-1d}
\sqrt{ \esp \left[ \|u^\varepsilon_\eta - u_S\|^2_\h \right] } \leq C 
\left( \eps + h + \eta \left(
    \frac{\varepsilon}{h} \right)^{1/2} \ln(1/h) 
+ \eta + \eta^2 \mathcal{C}(\eta) \right),
\end{equation}
where $C$ is a constant independent of $\varepsilon$, $h$ and $\eta$ 
and $\mathcal{C}$ is a bounded function as $\eta$ goes to $0$. 
\end{theorem}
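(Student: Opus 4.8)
The plan is to mirror, step by step, the proof of Theorem~\ref{theo:H1}, since the one-dimensional statement differs from the multidimensional one only through sharper intermediate estimates. I would again start from the second Strang lemma (Lemma~\ref{lem:Strang2}) applied to the bilinear form ${\cal A}^h_{\varepsilon,\eta}$ and the linear form $b$ of~\eqref{def:stoch-bili}, which reduces the task to bounding (i) the best-approximation error $\inf_{v_h\in\mathcal{W}_h}\|u_\eta^\varepsilon-v_h\|_\h$ and (ii) the non-conforming error $\sup_{w_h}|{\cal A}^h_{\varepsilon,\eta}(u_\eta^\varepsilon,w_h)-b(w_h)|/\|w_h\|_\h$. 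Coercivity of ${\cal A}^h_{\varepsilon,\eta}$ for $\|\cdot\|_\h$ is obtained exactly as in Step~1 of the proof of Theorem~\ref{theo:H1}, the norm equivalence~\eqref{equiv-norm}--\eqref{eq:merc2} between $\|\Rop(\widetilde v_h)\|_\h$ and $\|\widetilde v_h\|_{H^1(\mathcal{D})}$ being even more transparent here, since the local problems~\eqref{PB:MsFEM-Over-nb} are then scalar ODEs.

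For the best-approximation error I would use the triangle inequality~\eqref{eq:triang2} with the two-scale expansion $v_\eta^\varepsilon$, now built from the one-dimensional corrector and boundary-layer problems~\eqref{PB:w0} and~\eqref{eq:def_chi-1d}. The crucial improvement is that its first term is controlled by the \emph{one-dimensional} rate of Theorem~\ref{exp2scaleueta-1D}, namely $C(\varepsilon+\eta\sqrt{\varepsilon}+\eta^2)$, which replaces the $\sqrt\varepsilon$ of~\eqref{restueta} by $\varepsilon$ and removes the logarithmic factor; note also that the weaker assumption $f\in L^2$ then suffices and that no Hölder continuity is needed. The remaining two terms, $\esp\|v_\eta^\varepsilon-\Rop(u_\eta^\star)\|_\h^2$ and $\|\Rop(u_\eta^\star)-\Rop(\Pi^h u_\eta^\star)\|_\h^2$, I would estimate as in Steps~2a--2b, but exploiting that in dimension one the corrector $\wper_p$ and the boundary-layer function $\xi_\varepsilon^j$ of~\eqref{PB:xip} solve explicit scalar ODEs. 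This is what I would use to show that the boundary-layer contribution (the term $D_2$, responsible for the $\varepsilon/h$ loss in the multidimensional estimate~\eqref{majL2E2}) is in fact only of order $\varepsilon$, so that no $\varepsilon/h$ term survives.

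The non-conforming error would be handled with one-dimensional analogs of Lemmas~\ref{Lem1} and~\ref{Lem2}: the linear-form discrepancy remains $O(\varepsilon\|f\|_{L^2})$, while the bilinear-form discrepancy splits into a deterministic part (again improved, via the explicit ODE correctors, from $O(\varepsilon/h)$ to $O(\varepsilon)$) plus the random part $\eta\,\lambda(\omega,h,\varepsilon)+\eta^2\mathcal{C}(\eta)$. The random variable $\lambda$ is controlled exactly as in Lemma~\ref{coro:lambda} specialized to $d=1$ and $N(h)\sim h^{-1}$, giving $\sqrt{\esp[\lambda^2]}\leq C(\varepsilon/h)^{1/2}\ln(1/h)$, which produces the term $\eta(\varepsilon/h)^{1/2}\ln(1/h)$. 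Collecting all bounds as in Step~4 then yields~\eqref{est-H1-1d}.

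I expect the main obstacle to be the sharp treatment of the boundary layers: establishing rigorously, from the explicit one-dimensional correctors and using the alignment afforded by $h/\varepsilon\in\NN^\star$, that the deterministic $\varepsilon/h$ contributions arising in Step~2a and in the analog of Lemma~\ref{Lem1} genuinely collapse to $O(\varepsilon)$. The control of $\esp[\lambda^2]$, i.e. the one-dimensional counterpart of Lemma~\ref{coro:lambda}, is the second delicate point, since one must bound the expectation of a maximum over $O(h^{-1})$ normalized i.i.d.\ sums; this is precisely where the $\ln(1/h)$ factor becomes unavoidable.
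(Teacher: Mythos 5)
Your proposal is correct and coincides with the paper's approach: the paper's proof of Theorem~\ref{theo:H1-1d} is a one-sentence remark that the argument follows the same lines as the multi-dimensional case, with the improved one-dimensional rate of Theorem~\ref{exp2scaleueta-1D} replacing~\eqref{restueta}. Your additional details are also consistent with the paper's remarks surrounding the theorem, in particular that the alignment $h/\eps\in\NN^\star$ makes the $\eps/h$ contributions (boundary mismatch $\cell\setminus I^\eps_\cell$ and the terms involving $\nabla\xi^p_\varepsilon$, which here is constant since the endpoints of $\mathbf{S}$ fall on multiples of $\eps$) disappear, and that Lemma~\ref{coro:lambda} with $N(h)\sim h^{-1}$ yields the $\eta(\eps/h)^{1/2}\ln(1/h)$ term.
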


\begin{proof}
The proof of this result follows the same lines as that for the
multi-dimensional case. It is based upon the homogenization result
contained in Theorem~\ref{exp2scaleueta-1D} above. 
\end{proof}

As pointed out above, the rate of convergence stated in
Theorem~\ref{exp2scaleueta} (and hence the estimate provided by
Theorem~\ref{theo:H1}) is not optimal in dimension one. This hence
motivates Theorems~\ref{exp2scaleueta-1D} and~\ref{theo:H1-1d},
which are their respective one-dimensional variants.
On another note, the assumption $h/\eps \in \NN^\star$ implies that some
terms in the error bound vanish. A result similar to~\eqref{est-H1-1d}
holds in the absence of such assumption, with the additional term
$\eps/h$ in the right-hand side. 

\appendix

\section{Proofs of Lemmas~\ref{Lem1},~\ref{Lem2}
  and~\ref{coro:lambda}}
\label{sec:tech-proof}

\begin{proof}[Proof of Lemma~\ref{Lem1}]
This result relies on the expansion 
$$
\phi^{\varepsilon,\cell}_j(x) = \phi^{0,\cell}_j(x) + \varepsilon 
\sum\limits_{m=1}^d \left(\wper_{e_m}\left(\frac{x}{\varepsilon}\right)
  - \left. \xi_\varepsilon^m(x) \right|_\cell \right) \partial_m \phi^{0,\cell}_j 
$$
from~\eqref{phiexp} and the fact that $\nabla \phi^{0,\cell}_j$ is
constant on $\cell$. 

For any $v_h$ and $w_h$ in $\mathcal{V}_h$, we write
$$
\left|{\cal A}_\eta^\star(v_h,w_h)-
\widetilde{\cal A}^h_{\varepsilon,\eta}(v_h,w_h)\right|
=
\left|\sum\limits_{\cell \in \mathcal{T}_h}
\left(
\int_\cell \left(\nabla w_h(x) \right)^T
  A_\eta^\star \nabla v_h(x) \, dx  
- 
\sum\limits_{i,j =1}^L v_h^j w_h^i
\int_\cell  \left(\nabla \phi_i^{\varepsilon,\cell}\right)^T
A_\eta\left(\frac{x}{\varepsilon},\omega\right)\nabla\phi_j^{\varepsilon,\cell} 
\, dx \right)\right|, 
$$
where $v_h=\sum\limits_{j=1}^L v_h^j \phi_j^0$ and likewise for
$w_h$. Using the above expansion of $\phi^{\varepsilon,\cell}_j$ and
the fact that $\nabla \phi^{0,\cell}_j$ is constant on $\cell$, we have
\begin{eqnarray*}
&&
\sum\limits_{i,j=1}^L
v_h^j w_h^i \int_\cell \left(\nabla
  \phi_i^{\varepsilon,\cell}\right)^T A_\eta\left(\frac{x}{\varepsilon},\omega\right)\nabla\phi_j^{\varepsilon,\cell}
\, dx 
\\ 
&=& 
\sum\limits_{m,p=1}^d \frac{1}{|\cell|}\int_\cell  \left[e_p + \nabla
  \wper_{e_p}\left(\frac{x}{\varepsilon}\right) - \varepsilon \nabla
  \xi_\varepsilon^p(x) \right]^T
A_\eta\left(\frac{x}{\varepsilon},\omega\right)\left[e_m + \nabla
  \wper_{e_m}\left(\frac{x}{\varepsilon}\right) - \varepsilon \nabla
  \xi_\varepsilon^m(x) \right] \, dx 
\\
&& \hspace{6cm}
\times \sum\limits_{i,j=1}^L v_h^j w_h^i \int_{\cell} \partial_p
\phi^{0,\cell}_i \partial_m \phi^{0,\cell}_j
\\ 
&=& 
\sum\limits_{m,p=1}^d \frac{1}{|\cell|}\int_\cell  \left[e_p + \nabla
  \wper_{e_p}\left(\frac{x}{\varepsilon}\right) - \varepsilon \nabla
  \xi_\varepsilon^p(x) \right]^T
A_\eta\left(\frac{x}{\varepsilon},\omega\right)\left[e_m + \nabla
  \wper_{e_m}\left(\frac{x}{\varepsilon}\right) - \varepsilon \nabla
  \xi_\varepsilon^m(x) \right] \, dx 
\int_\cell \partial_m v_h \partial_p w_h.
\end{eqnarray*}
We thus obtain
\begin{equation}
\label{maj-Ah-Astar}
\left|
{\cal A}_\eta^\star(v_h,w_h)-\widetilde{\cal
  A}^h_{\varepsilon,\eta}(v_h,w_h)
\right| 
=
\left|\sum\limits_{\cell}\sum\limits_{m,p=1}^d
  \Lambda^\cell_{mp}\int_\cell \partial_m v_h \partial_p w_h \right|
\leq 
\sum\limits_\cell \|v_h\|_{H^1(\cell)} \|w_h\|_{H^1(\cell)}  
\sum\limits_{m,p=1}^d |\Lambda^\cell_{mp}|,
\end{equation}
where
$$
\Lambda^\cell_{mp} = \left[A^\star_\eta\right]_{mp} - \frac{1}{ |\cell|
} \int_\cell \left[e_p + \nabla
  \wper_{e_p}\left(\frac{x}{\varepsilon}\right) - \varepsilon \nabla
  \xi_\varepsilon^p(x) \right]^T
A_\eta\left(\frac{x}{\varepsilon},\omega\right) \left[e_m + \nabla
  \wper_{e_m}\left(\frac{x}{\varepsilon}\right) - \varepsilon \nabla
  \xi_\varepsilon^m(x) \right] \, dx, 
  $$
  which we write
\begin{equation}
\Lambda^\cell_{mp} = D_0 + D_1 - D_2,
\label{eq:decompo_Lambda}
\end{equation}
with
\begin{eqnarray}
D_0
&=&\left[A^\star_\eta\right]_{mp} - \frac{1}{ |\cell| } \int_\cell
\left[e_p + \nabla \wper_{e_p}\left(\frac{x}{\varepsilon}\right)
\right]^T A_\eta\left(\frac{x}{\varepsilon},\omega\right)\left[e_m +
  \nabla \wper_{e_m}\left(\frac{x}{\varepsilon}\right) \right] \, dx, 
\label{1-order-lambda}
\\
D_1 &=&
\frac{\varepsilon}{|\cell|}\left(\int_\cell  \left[e_p + \nabla
    \wper_{e_p}\left(\frac{x}{\varepsilon}\right)\right]^T
  A_{\eta}\left(\frac{x}{\varepsilon},\omega\right)  \nabla
  \xi_\varepsilon^m(x) \, dx + \int_\cell  \left(\nabla
    \xi_\varepsilon^p(x)\right)^T
  A_{\eta}\left(\frac{x}{\varepsilon},\omega\right) \left[e_m + \nabla
    \wper_{e_m}\left(\frac{x}{\varepsilon}\right)\right] \, dx \right), 
\nonumber \\
D_2 &=& 
\frac{\varepsilon^2}{|\cell|} \int_\cell  \left(\nabla
  \xi_\varepsilon^p(x)\right)^T
A_{\eta}\left(\frac{x}{\varepsilon},\omega\right)  \nabla
\xi_\varepsilon^m(x) \, dx. 
\nonumber
\end{eqnarray}
We are thus left with bounding $|\Lambda^\cell_{mp}|$ from above. We
first bound $D_1$ and $D_2$. Using Lemma~\ref{LemChen} (recall that
$A_{per}$ satisfies~\eqref{hyp:a_holder}, i.e. is H\"older continuous)
and the fact that $\wper_{e_p}\in H^{1}(Q)$ and is $Q$-periodic, we
obtain 
\begin{equation}
\left| D_2 \right| \leq C \frac{\varepsilon^2}{h^2}
\label{maj-th1-3}
\end{equation}
and
\begin{eqnarray*}
 \left| \frac{\varepsilon}{|\cell|} \int_\cell \left[e_p + \nabla
     \wper_{e_p}\left(\frac{x}{\varepsilon}\right)\right]^T
   A_{\eta}\left(\frac{x}{\varepsilon},\omega\right) \nabla
   \xi_\varepsilon^m(x) \, dx \right| &\leq&
 \frac{\varepsilon}{|\cell|}\|A_\eta\|_{L^\infty}\frac{C}{h} \int_\cell
 \left|e_p+\nabla \wper_{e_p}\left(\frac{x}{\varepsilon}\right)\right|
 \, dx 
 \\
 &\leq& C \frac{\varepsilon}{|\cell|h}\left(|\cell| + \varepsilon^d
   \int_{\cell/\varepsilon}|\nabla \wper_{e_p}(y)| \, dy\right) 
\\
&\leq& C \frac{\varepsilon}{h}
\end{eqnarray*}
hence
\begin{equation}
| D_1 | \leq C \frac{\varepsilon}{h},
\label{maj-th1-1}
\end{equation}
where $C$ is a deterministic constant independent of $h$, $\varepsilon$
and $\eta$. We next turn to $D_0$. We introduce the cells
$Q_i^\varepsilon=\varepsilon(Q+i), \, i\in \ZZ^d$, let
$I^\varepsilon_\cell=\bigcup\limits_{Q_i^\varepsilon \subset
  \cell}Q_i^\varepsilon$, and recast~\eqref{1-order-lambda} as 
\begin{equation}
\label{eq:decompo_D0}
D_0 = D_0^{\rm bulk} - D_0^{\rm boundary}
\end{equation}
with
\begin{eqnarray}
D_0^{\rm bulk}
&=& \left[A^\star_\eta\right]_{mp} - \frac{1}{ |\cell| }
\int_{I^\varepsilon_\cell}  \left[e_p + \nabla
  \wper_{e_p}\left(\frac{x}{\varepsilon}\right) \right]^T
A_\eta\left(\frac{x}{\varepsilon},\omega\right)\left[e_m + \nabla
  \wper_{e_m}\left(\frac{x}{\varepsilon}\right) \right] \, dx,
\label{maj-th1-4}\\
D_0^{\rm boundary}&=& 
\frac{1}{|\cell|} \int_{\cell \setminus I^\varepsilon_\cell}  \left[e_p
  + \nabla \wper_{e_p}\left(\frac{x}{\varepsilon}\right) \right]^T
A_\eta\left(\frac{x}{\varepsilon},\omega\right) \left[e_m + \nabla
  \wper_{e_m}\left(\frac{x}{\varepsilon}\right)\right] \, dx. 
\nonumber
\end{eqnarray}
We denote by $J^\varepsilon_\cell$ the set of cells $Q_i^\varepsilon$
that intersect the element $\cell$, i.e. 
$$
J^\varepsilon_\cell = 
\bigcup\limits_{Q_i^\varepsilon \bigcap \cell \neq \emptyset} Q_i^\varepsilon.
$$
By construction, $I^\varepsilon_\cell \subset \cell \subset
J^\varepsilon_\cell$. Using that $\wper_{e_p}\in H^{1}(Q)$ and is
$Q$-periodic, we write
\begin{eqnarray}
\left| D_0^{\rm boundary} \right|
&\leq&  \|A_\eta\|_{L^\infty}
\frac{1}{|\cell|}\sqrt{\int_{J^\varepsilon_\cell \setminus
    I^\varepsilon_\cell}\left[e_p + \nabla
    \wper_{e_p}\left(\frac{x}{\varepsilon}\right) \right]^2 \, dx}
\ 
\sqrt{\int_{J^\varepsilon_\cell \setminus I^\varepsilon_\cell}\left[e_m
    + \nabla \wper_{e_m}\left(\frac{x}{\varepsilon}\right) \right]^2 \,
  dx} 
\nonumber \\
&\leq& C \frac{\varepsilon^d}{|\cell|} \frac{|\partial
  \cell|}{\varepsilon^{d-1}} \sqrt{\int_{Q}\left[e_p + \nabla
    \wper_{e_p}\left(y\right) \right]^2 \, dy}
\ 
\sqrt{\int_{Q}\left[e_m + \nabla \wper_{e_m}\left(y\right) \right]^2 \,
  dy} 
\nonumber \\
&\leq& C \frac{\varepsilon}{h}. 
\label{maj-rest}
\end{eqnarray}
We next consider~\eqref{maj-th1-4}:
\begin{equation}
\label{eq:decompo_D0bulk}
D_0^{\rm bulk}
=
\frac{|\cell \setminus I^\varepsilon_\cell|}{|\cell|}
\left[A^\star_\eta\right]_{mp} + \frac{|I^\varepsilon_\cell|}{|\cell|}
\overline{D}_0^{\rm bulk},
\end{equation}
with
$$
\overline{D}_0^{\rm bulk}
=
\left[A^\star_\eta\right]_{mp} -
  \frac{1}{|I^\varepsilon_\cell|}\int_{I^\varepsilon_\cell}  \left[e_p +
    \nabla \wper_{e_p}\left(\frac{x}{\varepsilon}\right)\right]^T
  A_\eta\left(\frac{x}{\varepsilon},\omega\right)\left[e_m + \nabla
    \wper_{e_m}\left(\frac{x}{\varepsilon}\right)\right] \, dx.
$$
Using the expansion~\eqref{exp:Astareta} of $A^\star_\eta$, we write
\begin{multline}
\overline{D}_0^{\rm bulk}
=
\int_Q \left[e_p + \nabla \wper_{e_p}(y)\right]^T
A_{per}(y)\left[e_m + \nabla \wper_{e_m}(y)\right] \, dy  
\\
-
\frac{1}{|I^\varepsilon_\cell|} \int_{I^\varepsilon_\cell} \left[e_p +
  \nabla \wper_{e_p}\left(\frac{x}{\varepsilon}\right)\right]^T
A_{per}\left(\frac{x}{\varepsilon}\right)\left[e_m + \nabla
  \wper_{e_m}\left(\frac{x}{\varepsilon}\right)\right] \, dx  
\\
+ \eta \left( \int_Q \left[e_p + \nabla \wper_{e_p}(y)\right]^T
  \esp(A_1(y,\cdot))\left[e_m + \nabla \wper_{e_m}(y)\right] \, dy
\right.  
\\
\left.- \frac{1}{|I^\varepsilon_\cell|} \int_{I^\varepsilon_\cell} \left[e_p + \nabla \wper_{e_p}\left(\frac{x}{\varepsilon}\right)\right]^T A_1\left(\frac{x}{\varepsilon},\omega\right)\left[e_m + \nabla \wper_{e_m}\left(\frac{x}{\varepsilon}\right)\right] \, dx\right) 
+ \eta^2 \mathcal{C}(\eta).
\label{maj-th1-5}
\end{multline}
The leading order term in~\eqref{maj-th1-5} vanishes. We are hence left with
\begin{equation}
\overline{D}_0^{\rm bulk}
= \frac{\eta}{|I^\varepsilon_\cell|}
\int_{I^\varepsilon_\cell}\left[e_p+\nabla
  \wper_{e_p}\left(\frac{x}{\varepsilon}\right)\right]^T
\left(\esp\left(A_1\left(\frac{x}{\varepsilon},\cdot\right)\right)-A_1\left(\frac{x}{\varepsilon},\omega\right)\right)\left[e_m
  + \nabla \wper_{e_m}\left(\frac{x}{\varepsilon}\right)\right] \, dx +
\eta^2 \mathcal{C}(\eta). 
\label{maj-th1-6}
\end{equation}
Collecting~\eqref{eq:decompo_Lambda}, (\ref{maj-th1-3}),
(\ref{maj-th1-1}), \eqref{eq:decompo_D0}, (\ref{maj-rest}),
\eqref{eq:decompo_D0bulk} and (\ref{maj-th1-6}), together with the fact
that 
$\displaystyle \frac{|\cell \setminus
  I^\varepsilon_\cell|}{|\cell|}
\left|\left[A^\star_\eta\right]_{mp}\right|\leq C
\frac{\varepsilon}{h}$, we obtain 
$$
|\Lambda^\cell_{mp}| 
\leq 
C \frac{\varepsilon}{h} + 
\frac{\eta}{|I^\varepsilon_\cell|} \left|
\int_{I^\varepsilon_\cell}\left[e_p+\nabla
  \wper_{e_p}\left(\frac{x}{\varepsilon}\right)\right]^T
\left(\esp\left(A_1\left(\frac{x}{\varepsilon},\cdot\right)\right)-A_1\left(\frac{x}{\varepsilon},\omega\right)\right)\left[e_m
  + \nabla \wper_{e_m}\left(\frac{x}{\varepsilon}\right)\right] \, dx \right|
+ \eta^2 \mathcal{C}(\eta).
$$
We set
$$
\lambda(\omega,h,\varepsilon)=
\max\limits_{\cell \in \mathcal{T}_h} \max\limits_{1\leq m,p \leq d}
\left| \frac{1}{|I^\varepsilon_\cell|} \int_{I^\varepsilon_\cell} \left[e_p + \nabla \wper_{e_p}\left(\frac{x}{\varepsilon}\right)\right]^T \left(\esp\left(A_1\left(\frac{x}{\varepsilon},\cdot\right)\right)-A_1\left(\frac{x}{\varepsilon},\omega\right)\right)\left[e_m + \nabla \wper_{e_m}\left(\frac{x}{\varepsilon}\right)\right] \, dx \right|.
$$
We thus have, for any $\cell$, $\dps \sum\limits_{m,p=1}^d
|\Lambda^\cell_{mp}| \leq C \left( \frac{\varepsilon}{h} + \eta
  \lambda(\omega,h,\varepsilon) + \eta^2 \mathcal{C}(\eta)
\right)$. Using (\ref{maj-Ah-Astar}), we thus obtain that 
$$
\left|{\cal A}_\eta^\star(v_h,w_h)-\widetilde{\cal
    A}^h_{\varepsilon,\eta}(v_h,w_h)\right| 
\leq 
C \left( \frac{\varepsilon}{h} + \eta \lambda(\omega,h,\varepsilon) +
  \eta^2
  \mathcal{C}(\eta)\right)\|v_h\|_{H^1(\mathcal{D})}\|w_h\|_{H^1(\mathcal{D})}. 
$$
This concludes the proof of Lemma~\ref{Lem1}. 
\end{proof}

\noindent\begin{proof}[Proof of Lemma~\ref{Lem2}]
Again, as for Lemma~\ref{Lem1}, this result relies on the
expansion~\eqref{phiexp} of $\phi_i^{\varepsilon,\cell}$ and the fact
that $\nabla \phi_i^{0,\cell}$ is constant on $\cell$. 

Setting $w_h(x)=\sum\limits_i w_h^i \phi_i^0(x)$, we observe that
$$
\left|\widetilde{b}_h(w_h)-b(w_h)\right|
= 
\left| \sum\limits_{\cell} \sum\limits_{i=1}^L 
w_h^i \int_\cell f(x) 
\left(\phi_i^{\varepsilon,\cell}(x) - \phi_i^{0,\cell}(x) \right) \, dx \right|.
$$
Using~\eqref{phiexp} and the fact that $\nabla \phi_i^{0,\cell}$ is
constant on $\cell$, we obtain 
\begin{equation}
\sum\limits_{i=1}^L w_h^i \int_\cell f(x)
\left(\phi_i^{\varepsilon,\cell}(x) - \phi_i^{0,\cell}(x) \right)  \, dx 
= 
\sum\limits_{p=1}^d \frac{\varepsilon}{|\cell|} 
\int_\cell f(x) \left(\wper_{e_p}\left(\frac{x}{\varepsilon}\right) -
  \xi_\varepsilon^p(x) \right) \, dx  
\sum\limits_{i=1}^L w_h^i \int_\cell \partial_p \phi_i^{0,\cell}. 
\label{prooflem2-0}
\end{equation}
We have
\begin{equation}
\left| \sum\limits_{i=1}^L w_h^i \int_\cell \partial_p \phi_i^{0,\cell} \right|=\left| \int_\cell \partial_p w_h \right| \leq \sqrt{|\cell|} \ \|w_h\|_{H^1(\cell)} \label{prooflem2-1}
\end{equation}
and
\begin{eqnarray}
\nonumber
\left|\int_\cell f(x)
  \left(\wper_{e_p}\left(\frac{x}{\varepsilon}\right) -
    \xi_\varepsilon^p(x) \right) \,  dx\right| 
&\leq &
\|f\|_{L^2(\cell)}
\left(\left\| \wper_{e_p}\left(\frac{\cdot}{\varepsilon}\right)
  \right\|_{L^2(\cell)} + \|\xi_\varepsilon^p\|_{L^2(\cell)}\right)
\\
&\leq &
\|f\|_{L^2(\cell)} \sqrt{|\cell|}
\left(
\|\wper_{e_p}\|_{L^\infty(\RR^d)}
+ 
\|\xi_\varepsilon^p\|_{L^\infty(\cell)}
\right).
\label{prooflem2-2pre}
\end{eqnarray}
Recall now that, since $A_{per}$ satisfies~\eqref{hyp:a_holder} (i.e. is
H\"older continuous), we know that 
$\xi_\varepsilon^p$ and $\wper_{e_p}$ are both continuous, and that 
$\wper_{e_p} \in L^\infty(\RR^d)$. Using the maximum
principle on~\eqref{PB:xip}, we write 
$$
\|\xi_\varepsilon^p\|_{L^\infty(\cell)} 
\leq 
\|\wper_{e_p}\|_{L^\infty(\partial \mathbf{S})}
\leq 
\|\wper_{e_p}\|_{L^\infty(\RR^d)},
$$
and we thus deduce from~\eqref{prooflem2-2pre} that
\begin{equation}
\left|\int_\cell f(x)
  \left(\wper_{e_p}\left(\frac{x}{\varepsilon}\right) -
    \xi_\varepsilon^p(x) \right) \,  dx\right| 
\leq
C \|f\|_{L^2(\cell)} \sqrt{|\cell|}
\label{prooflem2-2}
\end{equation}
for a constant $C$ independent of $h$ and $\eps$.
Collecting~\eqref{prooflem2-0},~\eqref{prooflem2-1} and~\eqref{prooflem2-2},
we obtain 
$$
\left|\widetilde{b}_h(w_h)-b(w_h) \right| \leq C 
\varepsilon \|f\|_{L^2(\mathcal{D})}\|w_h\|_{H^1(\mathcal{D})}.
$$
This concludes the proof of Lemma~\ref{Lem2}.
\end{proof}

\noindent\begin{proof}[Proof of Lemma~\ref{coro:lambda}]
We first prove the uniform bound on $\lambda$. Recall that the field
$A_1$ is bounded almost surely and almost everywhere. This implies that 
\begin{multline*}
\left|\frac{1}{|I^\varepsilon_\cell|}
\int_{I^\varepsilon_\cell}\left[e_p + \nabla
    \wper_{e_p}\left(\frac{x}{\varepsilon}\right)\right]^T\left(
    A_1\left(\frac{x}{\varepsilon},\omega\right)-\esp\left(A_1\left(\frac{x}{\varepsilon},\cdot\right)\right)\right)\left[e_m + \nabla \wper_{e_m}\left(\frac{x}{\varepsilon}\right)\right] \, dx \right|  
\\
\leq 2 \|A_1\|_{L^\infty} \frac{1}{|I^\varepsilon_\cell|} \left\|e_p +
  \nabla
  \wper_{e_p}\left(\frac{\cdot}{\varepsilon}\right)\right\|_{L^2(I^\varepsilon_\cell)} \ \left\|e_m + \nabla \wper_{e_m}\left(\frac{\cdot}{\varepsilon}\right)\right\|_{L^2(I^\varepsilon_\cell)}.
\end{multline*}
Then, using the $Q$-periodicity of $\wper_{e_p}$, we obtain
$$
\left\|e_p + \nabla 
\wper_{e_p}\left(\frac{\cdot}{\varepsilon}\right)\right\|^2_{L^2(I^\varepsilon_\cell)}
=
\sum\limits_{Q_i^\varepsilon \subset I^\varepsilon_\cell} \int_{Q_i^\varepsilon} \left[ e_p + \nabla \wper_{e_p}\left(\frac{x}{\varepsilon}\right) \right]^2 \, dx
= |I^\varepsilon_\cell| \ \left\|e_p + \nabla
  \wper_{e_p}\right\|^2_{L^2(Q)}. 
$$
We thus have
$$
\lambda(\omega,h,\varepsilon) \leq 2 \|A_1\|_{L^\infty}
\max\limits_{1 \leq p,m \leq d} \left[
\left\| e_p + \nabla \wper_{e_p}\right\|_{L^2(Q)} \ \left\|e_m + \nabla
  \wper_{e_m}\right\|_{L^2(Q)} \right],
$$
hence $\lambda(\omega,h,\varepsilon)$ is bounded almost surely by a
deterministic constant independent of $h$ and $\varepsilon$.

\medskip

We next turn to~\eqref{eq:resu4a}. Rewrite~\eqref{def:sigN} as
$$
\lambda(\omega,h,\varepsilon)
=
\max\limits_\cell \max\limits_{1 \leq m,p \leq d} \left| S^{m,p}_\cell \right|,
$$ 
with
$$
S^{m,p}_\cell := 
\frac{1}{|I^\varepsilon_\cell|} \int_{I^\varepsilon_\cell}\left[e_p +
  \nabla \wper_{e_p}\left(\frac{x}{\varepsilon}\right)\right]^T\left(
  A_1\left(\frac{x}{\varepsilon},\omega\right)-\esp(A_1\left(\frac{x}{\varepsilon},\cdot\right)\right)\left[e_m + \nabla \wper_{e_m}\left(\frac{x}{\varepsilon}\right)\right] \, dx.
$$
Using the periodicity of the correctors $\wper_p$ and the specific
form~(\ref{struc:A1}) of $A_1$, we have
\begin{equation}
\label{eq:lundi2}
S^{m,p}_\cell 
= 
\tau^{m,p} \frac{1}{N_\cell} \sum\limits_{i;Q_i^\varepsilon \subset
  I^\varepsilon_\cell}\frac{X_i - \esp(X_0)}{\sqrt{\var(X_0)}}
\end{equation}
with 
$$
\tau^{m,p} = \sqrt{\var(X_0)} \int_Q  \left[e_p + \nabla \wper_{e_p}\left(y\right)\right]^T B_{per}(y) \left[e_m + \nabla \wper_{e_m}\left(y\right)\right] \, dy
\quad \quad \text{and} \quad \quad
N_\cell = \text{Card}\{i;Q_i^\varepsilon \subset I^\varepsilon_\cell \}.
$$
Thus, $\lambda(\omega,h,\varepsilon)$ reads
$$
\lambda(\omega,h,\varepsilon)
=
\gamma \max\limits_{\cell} \left| S^\varepsilon_\cell(\omega) \right|,
$$
where 
$\displaystyle \gamma = \max\limits_{1 \leq m,p \leq d} \tau^{m,p}$ and
$$
S^\varepsilon_\cell(\omega) = 
\frac{1}{N_\cell} \sum\limits_{i;Q_i^\varepsilon \subset
  I^\varepsilon_\cell}\frac{X_i - \esp(X_0)}{\sqrt{\var(X_0)}}. 
$$
Introduce the probability density function $\varphi_{N_\cell}$ of the
random variable $\sqrt{N_\cell}|S^\varepsilon_\cell(\omega)|$, and
$F_{N_\cell}(x)=\PP\left(\sqrt{N_\cell}|S^\varepsilon_\cell|\leq
  x\right)$. 
Using the assumption that each element $\cell$ contains a number $\dps N_\cell$ of
cells of size $\varepsilon$ that satisfies
$\dps N_\cell \geq \alpha \left( \frac{h}{\varepsilon} \right)^d$ for
some $\alpha>0$, independent of $\cell$, $h$ and $\varepsilon$, we write
$$
\esp\left(\alpha \frac{h^d}{\varepsilon^d}
  \frac{\lambda^2(\cdot,h,\varepsilon)}{\gamma^2} \right)
\leq
\esp\left(N_\cell \max\limits_\cell |S^\varepsilon_\cell|^2 \right)=\int_0^\infty x^2 \frac{d}{dx}\PP\left(\sqrt{N_\cell}\max\limits_\cell|S^\varepsilon_\cell|\leq x\right)dx.
$$
Since
$$
\PP\left(\sqrt{N_\cell}\max\limits_\cell|S^\varepsilon_\cell|\leq
  x\right)=\left[\PP\left(\sqrt{N_\cell}|S^\varepsilon_\cell|\leq
    x\right)\right]^{N(h)}=\left[F_{N_\cell}(x)\right]^{N(h)}, 
$$
we deduce that
\begin{equation}
\esp\left(\alpha \frac{h^d}{\varepsilon^d} 
\frac{\lambda^2(\cdot,h,\varepsilon)}{\gamma^2}\right)
\leq
\int_0^\infty x^2 \, N(h) \, F_{N_\cell}^{N(h)-1}(x) \ \varphi_{N_\cell}(x) \, dx 
=
e_1 + e_2 
\label{eq:cor1}
\end{equation}
where
$$
e_1 = \int_0^{c_h} x^2 N(h) \, F_{N_\cell}^{N(h)-1}(x) \ \varphi_{N_\cell}(x)
\, dx
\quad \text{and} \quad
e_2 = \int_{c_h}^\infty x^2 N(h) \, F_{N_\cell}^{N(h)-1}(x) \
\varphi_{N_\cell}(x) \, dx, 
$$
with $c_h=2\ln(N(h))$. We now successively bound from above $e_1$ and $e_2$. First, integrating by part, and using that $0 \leq
F_{N_\cell} \leq 1$, we obtain
\begin{equation}
\label{eq:cor2}
0 \leq e_1 = \left[ x^2 F_{N_\cell}^{N(h)}(x) \right]_{x=0}^{x=c_h}-\int_0^{c_h}2x F_{N_\cell}^{N(h)}(x) \, dx \leq c_h^2.
\end{equation}
Second, again using $0 \leq F_{N_\cell} \leq 1$, we get
$$
0 \leq e_2 \leq \int_{c_h}^\infty x^2 N(h) \varphi_{N_\cell}(x) \, dx =
N(h) \esp\left(\mathbf{1}_{\left\{\sqrt{N_\cell}|S^\varepsilon_\cell| >
      c_h\right\}}N_\cell|S^\varepsilon_\cell|^2\right). 
$$
Using the Cauchy-Schwartz inequality, we obtain
$$
e_2^2 \leq N(h)^2 \ \esp \left[ N_\cell^2 |S^\varepsilon_\cell|^4
\right] \ 
\PP\left(\sqrt{N_\cell}|S^\varepsilon_\cell| > c_h\right).
$$
Introduce $\displaystyle Y_i=\frac{X_i - \esp(X_0)}{\sqrt{\var(X_0)}}$,
so that $\displaystyle S^\varepsilon_\cell(\omega) = \frac{1}{N_\cell}
\sum\limits_i Y_i(\omega)$. Recall now that $\left(Y_i\right)_{i \in
  \ZZ^d}$ is a sequence of independent identically distributed
variables, with mean zero. Any such variables
satisfy the bounds
$$
\forall p\in \mathbb{N}^\star, \quad \exists C_p>0, \quad \forall N \in
\mathbb{N}^\star, \quad 
\left| \esp\left[ \left( \frac1N \sum_{i=1}^N Y_i\right)^{2p} \right]
\right| 
\leq
\frac{C_p}{N^p}, 
$$
for a constant $C_p$ that depends on $p$ and the moments of $Y_i$, up to
order $2p$. Recall that all moments of $Y_i$ are well defined, as $Y_i$
is bounded almost surely. Thus 
\begin{equation}
e_2^2 
\leq
C_4 N(h)^2 \, \PP\left(\sqrt{N_\cell}|S^\varepsilon_\cell| > c_h\right) 
\leq C_4 N(h)^2 \,\left[\PP\left(\sqrt{N_\cell}S^\varepsilon_\cell >
    c_h\right)+\PP\left(-\sqrt{N_\cell}S^\varepsilon_\cell > c_h\right)
\right]. 
\label{eq:cor7}
\end{equation}
We now recall the Markov inequality: for any positive non-decreasing
function $\psi$ on $\RR$, and any real-valued random variable $Z$, we
have 
$$
\forall b\in\RR,\quad \PP(Z \geq b) \leq \frac{\esp(\psi(Z))}{\psi(b)}.
$$
We apply this inequality to the random variable
$Z(\omega)=\sqrt{N_\cell}S^\varepsilon_\cell(\omega)$, with $\psi=\exp(t
\cdot)$ for some $t \geq 0$, and $b=c_h$. This yields 
\begin{equation}
 \PP(\sqrt{N_\cell}S^\varepsilon_\cell \geq c_h) 
\leq
e^{-t c_h}
\esp\left[\exp\left(t\sqrt{N_\cell}S^\varepsilon_\cell\right)\right] 
\leq
e^{-t c_h}
\left[\esp\left(\exp\left(\frac{t}{\sqrt{N_\cell}}Y_0\right)\right)\right]^{N_\cell}, 
\label{eq:cor4}
\end{equation}
where we have used the fact that $S^\varepsilon_\cell$ is a sum of i.i.d. variables. Using a Taylor expansion with respect to $t$, we see that
$$
\esp\left[\exp\left(\frac{t}{\sqrt{N_\cell}}Y_0\right)\right]=1+\frac{t^2}{2N_\cell}\esp(Y_0^2)+\frac{1}{6 N_\cell^{3/2}}\esp\left[Y_0^3\exp(\xi Y_0 / \sqrt{N_\cell})\right] \quad \text{for some }\xi\in(0,t).
$$
Thus
$$
 \left[\esp\left(\exp\left(\frac{t}{\sqrt{N_\cell}}Y_0\right)\right)\right]^{N_\cell}\leq \exp\left[\frac{t^2}{2}\esp(Y_0^2)+\frac{1}{6 \sqrt{N_\cell}}\esp\left(Y_0^3\exp(\xi Y_0 / \sqrt{N_\cell})\right)\right].
$$
Using~\eqref{eq:cor4}, taking $t=1$, and using that $\displaystyle e^{-c_h}=\frac{1}{N(h)^2}$, we obtain
\begin{eqnarray}
 \PP(\sqrt{N_\cell}S^\varepsilon_\cell \geq c_h) &\leq& \frac{1}{N(h)^2}
 \exp\left[\frac{1}{2}\esp(Y_0^2)+\frac{1}{6
     \sqrt{N_\cell}}\esp\left(Y_0^3\exp(\xi Y_0 /
     \sqrt{N_\cell})\right)\right] \quad \text{for some }\xi \in (0,1), 
\nonumber \\
&\leq& \frac{1}{N(h)^2} \exp\left[\frac{1}{2}\esp(Y_0^2)+\frac{1}{6}\esp\left(|Y_0|^3\exp(|Y_0|)\right)\right]. \label{eq:cor5}
\end{eqnarray}
Likewise, considering
$Z(\omega)=-\sqrt{N_\cell}S^\varepsilon_\cell(\omega)$, we obtain a
similar bound. 
Collecting~\eqref{eq:cor7},~\eqref{eq:cor5} and the fact that $Y_0$ is bounded almost surely, we have
\begin{equation}
\label{eq:cor3}
e_2^2 \leq C,
\end{equation}
with $C$ independent of $h$ and $\varepsilon$. Collecting~\eqref{eq:cor1},~\eqref{eq:cor2} and~\eqref{eq:cor3}, we get, for a constant $C$ independent of $h$ and $\varepsilon$,
$$
\esp(\lambda(\cdot,h,\varepsilon)^2) \leq C \frac{\varepsilon^d}{h^d} \left[\ln(N(h))\right]^2.
$$
This concludes the proof of Lemma~\ref{coro:lambda}.
\end{proof}

\begin{remark}
\label{rem:mittal}
The above proof shows that, when $\eps \to 0$, the random variable
$\dps \left( \frac{h}{\eps} \right)^{d/2} \lambda(\omega,h,\eps)$ converges
in law to $\dps {\cal G}_h(\omega) = \max_\cell \left| G_\cell(\omega) \right|$,
where $G_\cell(\omega)$ are i.i.d. Gaussian random variables. Precise
results on the behavior of ${\cal G}_h(\omega)$ when $h \to 0$ (i.e.,
when the number of Gaussian random variables involved diverges) are
given in e.g.~\cite{Mittal1974}.
\end{remark}

\bigskip

\noindent
\textbf{Acknowledgements:} Support from EOARD under Grant
FA8655-10-C-4002 is gratefully acknowledged.


\begin{thebibliography}{1}

\bibitem{Aarnes2009}
J.~Aarnes and Y.~R.~Efendiev,
{\sl Mixed multiscale finite element methods for stochastic porous media
  flows},
SIAM Journal on Scientific Computing, 30(5):2319-2339, 2009.

\bibitem{allaire-amar}
G. Allaire and M. Amar,
{\sl Boundary layer tails in periodic homogenization},
Control, Optimization and Calculus of Variations, 4:209-243, 1999.

\bibitem{Allaire2005}
G. Allaire and R. Brizzi,
{\sl A multiscale finite element method for numerical homogenization},
SIAM Multiscale Modeling \& Simulation, 4(3):790-812, 2005.

\bibitem{these-arnaud} A. Anantharaman, Th\`ese de l'Universit\'e Paris
  Est, Ecole des Ponts, 2010. 

{\tt http://hal.archives-ouvertes.fr/tel-00558618} 

\bibitem{Bris-sg} A. Anantharaman, R. Costaouec, C. Le Bris, F. Legoll
  and F. Thomines, 
{\sl Introduction to numerical stochastic homogenization and the related
computational challenges: some recent developments}, 
in \textit{Multiscale modeling and analysis for materials
  simulation}, W. Bao and Q. Du eds., Lecture Notes
Series, Institute for Mathematical Sciences, National University of
Singapore, vol. 22, 2011. 

\bibitem{cras-arnaud} A. Anantharaman and C. Le Bris, 
{\sl Homog\'en\'eisation d'un mat\'eriau p\'eriodique
faiblement perturb\'e al\'eatoirement [Homogenization of a weakly
randomly perturbed periodic material]}, C. R. Acad. Sci. S\'erie I, 
348:529-534, 2010. 

\bibitem{arnaud1} A. Anantharaman and C. Le Bris, {\sl A numerical approach
   related  to defect-type theories for some weakly random  problems in
   homogenization}, SIAM Multiscale Modeling \& Simulation, 
  9(2):513-544, 2011.

\bibitem{arnaud2} A. Anantharaman and C. Le Bris,  
{\sl Elements of mathematical foundations for a numerical
approach for weakly random homogenization problems}, Communications in
Computational Physics, in press, preprint available at {\tt
  http://arxiv.org/abs/1005.3922}.

\bibitem{avellaneda-lin-87} M. Avellaneda and F. H. Lin, {\sl Compactness
   methods in the theory of homogenization}, Comm. Pure Appl. Math.,
   40:803-847, 1987.

\bibitem{bal_residu} G. Bal, J. Garnier, S. Motsch and V. Perrier, {\sl
    Random integrals and correctors in homogenization},
Asymptot. Anal., 59(1-2):1-26, 2008.

\bibitem{bal2010} G. Bal and W. Jing, {\sl Corrector theory for MsFEM
    and HMM in random media}, preprint available at 

{\tt http://www.columbia.edu/$\sim$gb2030/PAPERS/MsFEM-HMM.pdf}.

\bibitem{blp} A. Bensoussan, J.-L. Lions and G. Papanicolaou, {\sl Asymptotic
  analysis for periodic structures}, Studies in Mathematics and its
  Applications, 5. North-Holland Publishing Co., Amsterdam-New York,
  1978.

\bibitem{Blanc2010} X. Blanc, R. Costaouec, C. Le Bris and F. Legoll,
  {\sl Variance reduction in stochastic homogenization using antithetic
    variables}, Markov Processes and Related Fields, to appear,
preprint available at

{\tt http://cermics.enpc.fr/$\sim$legoll/hdr/FL24.pdf}

\bibitem{cras-stoc} X. Blanc, C. Le Bris and P.-L. Lions, {\em Une variante
   de la th\'eorie de l'homog\'en\'eisation stochastique des op\'erateurs
   elliptiques [A variant of stochastic homogenization theory for
   elliptic operators]}, C. R. Acad. Sci. S\'erie I, 343:717-724, 2006. 

\bibitem{jmpa} X. Blanc, C. Le Bris and P.-L. Lions, 
{\em Stochastic homogenization and random lattices},
Journal de Math\'ematiques Pures et Appliqu\'ees, 88:34-63,
2007.

\bibitem{bourgeat_residu} A. Bourgeat and A. Piatnitski, {\sl Estimates
    in probability of the residual between the random and the
    homogenized solutions of one-dimensional second-order operator}, 
Asymptot. Anal., 21:303-315, 1999.

\bibitem{carballal} L. Carballal Perdiz, P. Degond, F. Deluzet, R.
  Loub\`ere, A. Lozinski and J.-M. Rovarch,
{\sl Multiscale finite element method for perforated domains}, in
preparation. 

\bibitem{Chen2006}
Z.~Chen,
{\sl Multiscale methods for elliptic homogenization problems},
Numer. Methods Partial Differential Eq., 22(2):317-360, 2006.

\bibitem{Chen2008}
Z. Chen, M. Cui, T.~Y. Savchuk and X. Yu,
{\sl The multiscale finite element method with nonconforming elements for
 elliptic homogenization problems},
SIAM Multiscale Modeling \& Simulation, 7(2):517-538, 2008.

\bibitem{Chen2002}
Z. Chen and T.~Y. Hou,
{\sl A mixed multiscale finite element method for elliptic problems with
  oscillating coefficients}, Math. Comp., 72(242):541-576, 2002.

\bibitem{Chen2008a}
Z. Chen and T.~Y. Savchuk,
{\sl Analysis of the multiscale finite element method for nonlinear and
  random homogenization problems},
SIAM Journal on Numerical Analysis, 46(1):260-279, 2008.

\bibitem{Ciarlet}
P.~G. Ciarlet,
{\sl The finite element method for elliptic problems},
North-Holland, 1978.

\bibitem{cd} D. Cioranescu and P. Donato, {\sl An introduction to
    homogenization}. Oxford Lecture Series in Mathematics and its
  Applications, 17. The Clarendon Press, Oxford University Press, New
  York, 1999.

\bibitem{Ronan2010}
R. Costaouec, {\sl Asymptotic expansion of the homogenized matrix in two
weakly stochastic homogenization settings}, Appl. Math. Res. Express,
2011, available at

http://amrx.oxfordjournals.org/content/early/2011/10/28/amrx.abr011.short?rss=1

\bibitem{cras-ronan} R. Costaouec, C. Le Bris and F. Legoll, 
{\sl Approximation num\'erique d'une classe de probl\`emes en
  homog\'en\'eisation stochastique [Numerical approximation of a class
  of problems in stochastic homogenization]}, C. R. Acad. Sci. S\'erie
I, 348:99-103, 2010.  

\bibitem{Dostert2008}
P.~Dostert, Y.~R.~Efendiev and T.~Y.~Hou,
{\em Multiscale finite element methods for stochastic porous media flow
  equations and application to uncertainty quantification},
Computer Methods in Applied Mechanics and Engineering,
197(43-44):3445--3455, 2008. 

\bibitem{hmm_init} 
W. E and B. Engquist, 
{\em The heterogeneous multiscale methods}, Comm. Math. Sci.,
1(1):87--132, 2003.  

\bibitem{hmm} 
W. E and B. Engquist, 
{\em The Heterogeneous Multiscale Method for homogenization
problems}, in {\em Multiscale Methods in Science and Engeneering}, pp. 89-110,
Lect. Notes in Comput. Sci. Eng., vol. 44, Springer, Berlin, 2005.

\bibitem{hmm_review} W. E, B. Engquist, X. Li, W. Ren and
  E. Vanden-Eijnden, {\em Heterogeneous multiscale methods: a review},
  Comm. Comput. Phys., 2(3):367--450, 2007. 

\bibitem{Efendiev2009}
Y.~R.~Efendiev and T.~Y.~Hou,
{\em Multiscale finite element methods: theory and applications},
Surveys and tutorials in the applied mathematical sciences, Springer,
  New York, 2009.

\bibitem{Efendiev2004}
Y.~R. Efendiev, T.~Y.~Hou and V.~Ginting,
{\sl Multiscale finite element methods for nonlinear problems and their
  applications},
Communications in Mathematical Sciences, 2(4):553-589, 2004.

\bibitem{Efendiev2000}
Y.~R. Efendiev, T.~Y.~Hou and X.-H. Wu,
{\sl Convergence of a nonconforming multiscale finite element method},
SIAM Journal on Numerical Analysis, 37(3):888-910, 2000.

\bibitem{hecht} {\tt FreeFEM, http://www.freefem.org}

\bibitem{gilbarg-trudinger} D. Gilbarg and N. S. Trudinger,
{\sl Elliptic partial differential equations of second order}, reprint of the
1998 ed., Classics in Mathematics, Springer, 2001. 

\bibitem{Ginting2010}
V. Ginting, A. Malqvist and M. Presho,
{\sl A novel method for solving multiscale elliptic problems with
  randomly perturbed data}, 
SIAM Multiscale Modeling \& Simulation, 8(3):977-996, 2010.

\bibitem{Gloria2008}
A. Gloria,
{\sl An analytical framework for numerical homogenization. Part II:
  Windowing and oversampling}, 
SIAM  Multiscale Modeling \& Simulation, 7(1):274-293, 2008.

\bibitem{hou1997}
T.~Y.~Hou and X.-H. Wu,
{\sl A multiscale finite element method for elliptic problems in 
composite materials and porous media},
Journal of Computational Physics, 134(1):169-189, 1997.

\bibitem{hou1999}
T.~Y.~Hou, X.-H. Wu and Z.~Cai,
{\sl Convergence of a multiscale finite element method for elliptic
  problems with rapidly oscillating coefficients}, Math. Comp.,
68(227):913-943, 1999.

\bibitem{hou2004}
T.~Y.~Hou, X.-H. Wu and Y.~Zhang,
{\sl Removing the cell resonance error in the multiscale finite element
 method via a Petrov-Galerkin formulation},
Communications in Mathematical Sciences, 2(2):185-205, 2004.

\bibitem{Jikov1994} V.~V. Jikov, S.~M. Kozlov and O.~A. Oleinik, {\sl
   Homogenization of differential operators and integral functionals},
   Springer-Verlag, 1994.

\bibitem{enumath}
C.~Le~Bris, {\sl Some numerical approaches for ``weakly'' random
  homogenization}, Proceedings of ENUMATH 2009, Springer
Lecture Notes in Computational Science and Engineering, G. Kreiss et
  al. (eds.), Numerical Mathematics and Advanced Applications 2009,
  p.~29-45, 2010.

\bibitem{rate} C. Le Bris, F. Legoll and F. Thomines, 
{\sl Rate of convergence of a two-scale expansion for some weakly
  stochastic homogenization problems}, submitted to Asymptotic Analysis, 
preprint available at 

{\tt http://arxiv.org/abs/1110.5206} 

\bibitem{residu-bll} F. Legoll and F. Thomines, {\sl Convergence of the
    residual process of a variant of stochastic homogenization in
    dimension one}, in preparation. 

\bibitem{lozinski_hdr} A. Lozinski, Habilitation \`a Diriger des
  Recherches, Universit\'e Paul Sabatier (Toulouse), 2010.

{\tt http://www.math.univ-toulouse.fr/$\sim$lozinski/hdr.pdf}

\bibitem{RB_general} Y. Maday, {\sl Reduced basis method for the rapid
    and reliable solution of partial differential equations}, in 
International Congress of Mathematicians, vol. III, pp.~1255-1270,
Eur. Math. Soc., Z\"urich, 2006.

\bibitem{Mittal1974} Y.~Mittal, {\sl Limiting behavior of maxima in
    stationary Gaussian sequences}, 
The Annals of Probability, 2(2):231-242, 1974.

\bibitem{Papanicolaou-Varadhan} G. C. Papanicolaou and S. R. S. Varadhan,
  {\sl Boundary value problems
with rapidly oscillating random coefficients}, in Proc. Colloq. on Random
Fields: Rigorous Results in Statistical Mechanics and Quantum Field
Theory, 1979 (J. Fritz, J. L. Lebaritz and D. Szasz, eds.), Vol. 10 of
Colloquia Mathematica Societ. Janos Bolyai, pp. 835-873, North-Holland,
1981.

\bibitem{composites} L. Tartar,
{\em Estimations of homogenized coefficients}, in 
{\em Topics in the mathematical modelling of composite materials},
A. Cherkaev and R. Kohn eds., Progress in nonlinear differential
equations and their applications, vol. 31, Birkh\"auser, 1987.

\end{thebibliography}
\end{document}